\documentclass[10pt]{article}

\usepackage{enumitem} 
\usepackage{amsthm,amsfonts}
\usepackage{amsmath,amssymb,wasysym}
\usepackage{graphicx}
\usepackage{wrapfig}
\usepackage{comment}
\usepackage{caption}
\usepackage{subcaption}
\usepackage{tikz}
\usetikzlibrary{arrows.meta}
\usepackage{breqn}
\usepackage{booktabs}
\usepackage{authblk}
\usepackage{layout}
\usepackage[makeroom]{cancel}

\usepackage{pgfplots}
\pgfplotsset{compat=1.18}

\usepackage[%
  colorlinks=true,%
	linkcolor	=blue,%
	citecolor =blue,%
  urlcolor=blue%
]{hyperref}

\newcommand{\ControlC}{\textsf{\textup{\textbf{c}}}}
\newcommand{\ControlS}{\textsf{\textup{\textbf{s}}}}

\usepackage{hypcap}%importante
\usepackage{float}% Antes nofloat importante

\usepackage[left=2.8cm,right=2.8cm, top=3.1cm, bottom=2.8cm]{geometry}
\newtheorem{theo}{Theorem}[section]
\newtheorem{lemm}[theo]{Lemma}

\newtheorem{prop}[theo]{Proposition}

\newtheorem{remark}[theo]{Remark}
\usepackage{stackengine}%para hacer inmersión compacta
\newcommand{\R}{\hbox{\rm I \kern -5pt R}}     % simbolo de Reales
\newcommand{\p} {\hbox{\rm I \kern -5pt P}}

\def\x        {\textit{\textbf{x}}}
\def\n        {\nabla}

\numberwithin{equation}{section}

\definecolor{ao(english)}{rgb}{0.0, 0.5, 0.0}
\definecolor{aqua}{rgb}{0.0, 1.0, 1.0}
\definecolor{bazaar}{rgb}{0.6, 0.47, 0.48}
\definecolor{britishracinggreen}{rgb}{0.0, 0.26, 0.15}

\title{Direct and indirect operator learning strategies for parametric optimal control problems}

\author[a]{Juan J. Forero-Herna\'ndez\thanks{E-mail: \href{mailto: juan2258036@correo.uis.edu.co}{ juan2258036@correo.uis.edu.co}}}
\author[b]{Francisco Guill\'en-Gonz\'alez\thanks{E-mail (Corresponding author): \href{mailto:guillen@us.es}{guillen@us.es}}}
\author[a]{\'Elder J. Villamizar-Roa\thanks{E-mail: \href{mailto:jvillami@uis.edu.co}{jvillami@uis.edu.co}}}

\affil[a]{Universidad Industrial de Santander, Escuela de Matem\'{a}ticas,  Carrera 29 Calle 9, A.A. 678, CP 680002, Bucaramanga, Colombia.}
\affil[b]{Universidad de Sevilla, Dpto. Ecuaciones Diferenciales y Análisis Numérico and IMUS, Facultad de Matemáticas, C. Tarfia, s/n, 41012, Sevilla, España.}

\date{}

\begin{document}
\maketitle

%%%%%%%%%%%%%%%%%%%%%%%%%%%%%%%%%%%%%%%%%%%%%%%%%%%%%%%%%%%%%
%%%%%%%%%%%%%%%%%%%%%%%%%%%%%%%%%%%%%%%%%%%%%%%%%%%%%%%%%%%%%
% RESUMEN
%%%%%%%%%%%%%%%%%%%%%%%%%%%%%%%%%%%%%%%%%%%%%%%%%%%%%%%%%%%%%
%%%%%%%%%%%%%%%%%%%%%%%%%%%%%%%%%%%%%%%%%%%%%%%%%%%%%%%%%%%%%

\begin{abstract}
This paper focuses on the use of learning nonlinear operators via Deep Operator Networks (DeepONets) for approximating the solutions of optimal control problems. We study two operator learning strategies, namely, indirect and direct, both based on DeepONets, for parametric families of optimal control problems where the same problem must be solved many times for different initial data, different parameters, different cost function weights, and different bounds defining the set of the admissible controls. In the indirect strategy two DeepONets approximate the solution operators of the state and adjoint systems, and their predictions are used inside iterative projected gradient algorithms for finding the optimal control. In the direct strategy, a single DeepONet approximates the operator that maps directly to the optimal control generated by a reference solver; consequently, a new control is obtained through a single evaluation of the network. In order to analyze the computational cost, the efficiency, the speed and the accuracy of the learning strategies, we consider an optimal control problem related to an ordinary differential system that describes the temporal evolution of the glioblastoma growth subject to therapeutic interventions. For this specific optimal control problem we propose a discretize-then-optimize strategy, based on an implicit-explicit finite difference method, and then, we develop several numerical experiments in order to compare the two DeepONet learning strategies for this problem. The proposed operator learning DeepONet strategies can be adapted to approximate large classes of optimal control problems related to diverse differential systems. 

\vspace{0.3cm}

\noindent{\bf Keywords.} Operator Learning, DeepONet, Parametric Optimal Control Problems, Projected Gradient Method, Glioblastoma, Discretize-Then-Optimize.\vspace{0.3cm}

%\noindent{\bf AMS subject classifications.} 35K55; 35Q35; 35Q92; 92C17
\noindent{\bf AMS subject classifications.}  {68T07; 49M25; 49M05; 34C60; 49K15; 92C50.}
\end{abstract}

%%%%%%%%%%%%%%%%%%%%%%%%%%%%%%%%%%%%%%%%%%%%%%%%%%%%%%%%%%%%%
%%%%%%%%%%%%%%%%% %%%%%%%%%%%%%%%%%%%%%%%%%%%%%%%%%%%%%%%%%%%%
% CAPÍTULO 1: INTRODUCCIÓN
%%%%%%%%%%%%%%%%%%%%%%%%%%%%%%%%%%%%%%%%%%%%%%%%%%%%%%%%%%%%%
%%%%%%%%%%%%%%%%%%%%%%%%%%%%%%%%%%%%%%%%%%%%%%%%%%%%%%%%%%%%%
\section{Introduction}
In many applications, one must solve not just a single optimal control problem, but a family of problems determined by the variability of the initial conditions, the parameters of the model, the objective functional weights, or control constraints. This situation arises, for example, when evaluating different scenarios, conducting sensitivity analyses, or seeking to tailor an intervention to the characteristics of each individual.

Numerical methods based on gradient descent allow these problems to be addressed systematically: for a given control, the state system is first solved forward in time, followed by the adjoint system backward in time, and a descent direction is calculated from both solutions. This procedure is repeated until a stopping criterion is met \cite{Hinze,Troltzsch}. Although the use of the adjoint avoids the need to compute separately the sensitivities with respect to each control component, the repeated solution of the state and adjoint systems can be computationally expensive when the same problem must be evaluated for a large number of parametric configurations.\\

In recent years, neural networks have been incorporated into the study of problems governed by differential equations. Among the most widely used approaches are physics-informed neural networks (PINNs), which incorporate model equation residuals and initial or boundary conditions into the loss function \cite{Raissi}. This methodology has been extended to optimal control problems using networks designed to approximate state variables, control variables, and, in some formulations, the adjoint variables \cite{Mowlavi2023,BarryStraume2026}. Subsequently, methods capable of handling parametric families of these problems have been developed. For instance, Demo {\it et al.}\cite{Demo2023} considered the parameters of the model  as additional network inputs and investigated the application of this formulation to optimal control problems related to PDEs. In addition, Yin {\it et al.} \cite{Yin2024} proposed an adjoint-driven method in which distinct networks simultaneously approximate the state, adjoint, and control variables for a family of parameter values. These approaches enable the derivation of solutions for new parameter configurations without solving each problem using a conventional numerical method. However, their training explicitly utilizes the system equations and optimality conditions, thereby tying the networks to the specific control problem for which they were constructed.\\

Operator learning offers another possibility. Instead of approximating a single function, its goal is to approximate mappings between function spaces. The universal approximation results for nonlinear operators by Chen and Chen \cite{ChenChen} provide a theoretical structure of this approach. Then, Lu {\it et al.} \cite{LuDeepONet} proposed Deep Operator Networks (DeepONets), whose architecture combines a \textit{branch} network, responsible for representing the input function via its values at a set of sensors, and a \textit{trunk} network, which takes the point where the output function is to be evaluated. After a training phase, a DeepONet can be evaluated for new input functions without solving the corresponding differential problem. 
Recently, the application of these ideas to solve optimal control problems has taken various directions. Hwang {\it et al.} \cite{Hwang2022} approximated the solution operator of the state equation and used the learned surrogate in a subsequent optimization stage with respect to the control. Yong {\it et al.} \cite{Yong2024} proposed multi-input, multi-output operator networks to approximate the variables associated with the optimality system.\\

In this work, we consider a parametric family of optimal control problems determined by the data appearing in the state system, the weights of the objective functional, and the constraints defining the set of admissible controls. Based on this formulation, we identify three operators: the state operator, the adjoint operator, and the operator that maps each data set to the control produced by a reference algorithm. Note that the last operator is well-defined even when the problem possesses multiple minimizers, as it represents not the entire set of solutions, but rather the specific element selected by a pre-established numerical procedure. To this end, we propose two supervised strategies based on DeepONets, namely, \textbf{\textit{Indirect strategy}} and \textbf{\textit{Direct strategy}}. In the indirect strategy two independent networks are trained to approximate the state and adjoint operators. Their predictions replace the numerical solutions of the state and adjoint systems within the gradient descent method. On the other hand, in the  direct strategy a single DeepONet is trained to approximate the optimal control operator. In this case, a new control is obtained through a single evaluation of the network, but its admissibility depends on the accuracy with which the network has learned the constraints present in the data. 
This operator is only defined on the data for which the optimal control algorithm converges.\\

These two learning strategies are analyzed in the context of an optimal control problem associated with a controlled ODE model describing glioma cell growth in the presence of oxygen. In this model, the controls represent cytotoxic and antiangiogenic therapies, and the objective is to reduce the tumor density and regulate the oxygen concentration while limiting the side effects caused by these therapeutic interventions. Additionally, a set of admissible controls subject to pointwise and integral constraints is considered (see the mathematical formulation in Section \ref{sec:ejemplo}). The aim is to use this control problem to validate the Indirect and Direct learning strategies. In order to establish a common frame of reference for this specific optimal control problem, we develop a {\it discretize-then-optimize} scheme. First, the state system and the objective functional are discretized via an implicit-explicit finite difference method; subsequently, the sensitivity system, the adjoint system, the reduced functional, and its gradient are derived at the discrete level. The same method is used to generate the supervised data, construct the labels for the direct strategy, and serve as the reference against which the two learning strategies are evaluated.\\

The study of an optimal control problem associated with an ODE model provides a particularly demanding scenario for evaluating operator learning. Since the system is low-dimensional, the reference {\it discretize-then-optimize} scheme is explicit and is highly efficient; indeed, each time step of the state and adjoint schemes reduces to just a few arithmetic operations, meaning the potential advantage of a neural network-based approach is minimal.\\

In summary, the contributions of this paper are the following: First, we formulate two operator learning strategies based on DeepONets, for the analysis of parametric families of optimal control problems. These strategies are supported by the universal approximation theorem for operator (Theorem~\ref{teo:aproximacion_operadores}), and have the advantage of being very efficient during the online stage, since the online evaluation only requires forward passes of standard feed-forward networks and no specialized numerical library. In addition, the main general advantage of the proposed indirect strategy is that it preserves the structure of the control algorithm, for instance the projection onto the admissible set, so that the computed controls are always admissible. On the other hand, the main advantage of the direct strategy is that it avoids the use of the control algorithm, with a very low computational cost. Second, we analyze an optimal control problem related to an ODE model of glioblastoma growth influenced by the oxygen. The  model considers a pair of controls describing the chemotherapy and antiangiogenic therapy. The cost functional aims to reduce the tumor density, bring the oxygen concentration close to a desired value, and penalize the use of therapies. We analyze the optimal control problem theoretically (well-posedness of the state system, existence of optimal controls and discrete firs-order optimality conditions) and solve it numerically, and then, we compare that numerical approximation with the counterparts obtained with the two learning strategies.\\

The outline of this paper is as follows. In Section 2, we establish the notation and necessary background on neural networks and DeepONets. In Section 3, we formulate the parametric family of optimal control problems, we describe the classical reference method, and  finally, we introduce the indirect and direct learning strategies based on DeepONet. In Section 4, we analyze the ODE-based optimal control problem for cytotoxic and antiangiogenic therapies, and develop the discretize-then-optimize strategy; we also describe the construction of datasets and the network training. Finally, in the same section, we compare numerically the proposed methodologies.

\section{Preliminaries of DeepONets}

In this section we collect the notation and some preliminaries that will be used throughout the paper.

\subsection{Notation}\label{sec:notacion}

Throughout this paper, $T>0$ denotes a fixed final time. For $1\leq p\leq\infty$, we denote by $L^p(0,T)$ the usual Lebesgue space of real-valued functions on $(0,T)$, endowed with the norm $\|\cdot\|_{L^p}$. The corresponding spaces of vector-valued functions are denoted by $L^p(0,T;\mathbb{R}^d)$. In particular, the inner product and norm in $L^2(0,T)$ are denoted by $(\cdot,\cdot)_{L^2}$ and $\|\cdot\|_{L^2}$, respectively. If $K$ is a compact subset of a normed space, $C(K)$ denotes the Banach space of real-valued continuous functions on $K$, endowed with the supremum norm $\|f\|_{C(K)}:=\max_{x\in K}|f(x)|$. The Euclidean norm in $\mathbb{R}^d$ is denoted by $\|\cdot\|_2$.\\

Let $H$ be a real Hilbert space, with inner product $(\cdot,\cdot)_H$ and norm $\|\cdot\|_H$, and let $A\subset H$ be a nonempty, closed and convex set. We denote by $\operatorname{Proj}_A:H\to A$ the orthogonal projection onto $A$, defined by
\[
\operatorname{Proj}_A(v) := \underset{w\in A}{\operatorname{arg\,min}}\|v-w\|_H, \qquad v\in H.
\]
In the particular case $H=\mathbb{R}$ and $A=[a,b]$, with $a\leq b$, this projection is given by
\[
\operatorname{Proj}_{[a,b]}(r) = \min\{\max\{r,a\},b\}, \qquad r\in\mathbb{R}.
\]

\subsection{Artificial neural networks (ANNs)}

The Artificial neural networks (ANNs) are function approximators constructed by composing affine mappings with nonlinear activation functions. The basic unit is the \textit{simple perceptron}, in which, given an input vector $x=[x_1,\dots,x_d]^{T}\in\mathbb{R}^d$, a weight vector $\mathbf{w}=[w_1,\dots,w_d]^{T}\in\mathbb{R}^d$, and a bias $b\in\mathbb{R}$, the unit computes the affine combination $z=\mathbf{w}^{T}x+b$ and returns $y=\varphi(z)$, where $\varphi$ is a nonlinear function called the \textit{activation function}.\\

A \textit{multilayer perceptron} (MLP) is obtained by composing several layers of such units. Let $D\geq2$ be the depth of the network,  $n_0$ the dimension of the input, $n_D=n_{out}$ the number of outputs and $n_1,\dots,n_{D-1}\in\mathbb{N}$ denote the widths of the hidden layers. The set of trainable parameters is denoted by $\theta:=((W^{(l)},b^{(l)}))_{l=1}^{D}$, where
\[
W^{(l)}\in\mathbb{R}^{n_l\times n_{l-1}},\qquad b^{(l)}\in\mathbb{R}^{n_l},\qquad l=1,\dots,D.
\]
Then, given $x\in\mathbb{R}^{n_0}$, the network is defined recursively by $a^{(0)}(x):=x$ and
\[
z^{(l)}(x):=W^{(l)}a^{(l-1)}(x)+b^{(l)},\qquad a^{(l)}(x):=\varphi_l\bigl(z^{(l)}(x)\bigr),\qquad l=1,\dots,D-1,
\]
where $\varphi_l$ denotes the activation function of the $l$-th hidden layer, acting component-wise, while the output layer is given by
\[
z^{(D)}(x):=W^{(D)}a^{(D-1)}(x)+b^{(D)},\qquad \mathcal{N}_\theta(x):=z^{(D)}(x)\in\mathbb{R}^{n_{out}}.
\]
The parameters $\theta$ are determined by minimizing a loss function, which is usually used by means of a gradient descent method; in this paper we employ the Adam optimizer \cite{Adam}, and the gradients are computed by backpropagation \cite{Goodfellow} with respect to $\theta$.

\subsection{Deep operator networks}

In many problems of scientific computing, one is not only interested in approximating a particular solution of a differential equation, but also in understanding how the solution changes when the data of the problem are modified, for instance, an initial condition, a parameter, a function or a parametric control. Achieving this using classical methods would require repeating the entire numerical calculation, which is often costly in terms of time and processing resources.\\

Deep operator networks (DeepONets) \cite{LuDeepONet} are emerging as a machine learning-based alternative for approximating operators between functional spaces. Unlike a standard neural network, which approximates a mapping between finite-dimensional spaces, a DeepONet is designed to learn mappings of the form
\[
\mathcal{G}:\mathcal{X}\longrightarrow\mathcal{Y},
\]
where $\mathcal{X}$ and $\mathcal{Y}$ denote the spaces of input and output functions, respectively. For example, these neural operators can approximate the solution operators associated with differential equations; indeed, under appropriate assumptions, the data of an evolution problem uniquely determine its solution, and the mapping that sends the data to the solution defines an operator between function spaces.\\

The central idea of a DeepONet is to approximate the (possibly nonlinear) operator $\mathcal{G}$ by the cooperation of two neural networks: a \textit{branch net}, which processes the input function $f$, and a \textit{trunk net}, which processes the evaluation point $x$ \cite{LuDeepONet}. Since the domain of $f$ is an infinite set, the function $f$ must be represented in a finite way. To this end, a set of $m$ points $\{\xi_1,\dots,\xi_m\}\subset\operatorname{Dom}(f)$, called \textit{sensors}, is fixed and $f$ is encoded by its vector of sampled values
\begin{equation}\label{eq:vector_entrada_deeponet}
\mathbf{f}_\xi=\bigl(f(\xi_1),f(\xi_2),\dots,f(\xi_m)\bigr)\in\mathbb{R}^m.
\end{equation}
It is important to remark that the sensors are common to all input functions and remain fixed during the training; only the values $f(\xi_j)$ change. Thus, the branch net receives $\mathbf{f}_\xi$ and produces a vector of $\ell$ components ($\ell$ being called the \textit{latent dimension})
\[
\bigl(b_1(\mathbf{f}_\xi),b_2(\mathbf{f}_\xi),\dots,b_\ell(\mathbf{f}_\xi)\bigr)\in\mathbb{R}^\ell,
\]
whereas the trunk net receives the point $x$ at which the operator is to be evaluated and produces a vector of $\ell$ components
\[
\bigl(t_1(x),t_2(x),\dots,t_\ell(x)\bigr)\in\mathbb{R}^\ell.
\]
The approximation of the operator is obtained by combining both outputs by means of an inner product, plus a scalar bias $b_0\in\mathbb{R}$, in the following way:
\begin{equation}\label{eq:prediccion_deeponet}
\mathcal{G}_\vartheta[\mathbf f_\xi](x)=\sum_{i=1}^{\ell}b_i\bigl(\mathbf{f}_\xi\bigr)\,t_i(x)+b_0.
\end{equation}
The set of trainable parameters of the DeepONet, denoted by $\vartheta$, is composed of the union of the weights and biases of the branch net ($\theta_b$) and of the trunk net ($\theta_t$), together with the bias $b_0$, so that $\vartheta=(\theta_b,\theta_t,b_0)$. Figure \ref{fig:deeponet} summarizes this construction.

\begin{figure}[H]
\centering
\resizebox{0.8\linewidth}{!}{
\begin{tikzpicture}[x=1cm,y=1cm,line cap=round,line join=round]
  \definecolor{blueacc}{RGB}{20,70,255}
  \tikzset{
    flow/.style={blueacc,-{Latex[length=3mm,width=2mm]}, line width=0.9pt},
    netnode/.style={circle, draw=black, line width=0.6pt, fill=white, minimum size=0.34cm, inner sep=0pt},
    outnode/.style={circle, draw=black, line width=0.6pt, fill=white, minimum size=0.5cm, inner sep=0pt, font=\scriptsize},
    innode/.style={circle, draw=blueacc, line width=0.8pt, fill=white, minimum size=0.52cm, inner sep=0pt},
    netline/.style={draw=black, line width=0.55pt},
    tag/.style={font=\small\itshape},
  }

  %--------------------------------------------------------
  % Input function f sampled at the m sensors
  %--------------------------------------------------------
  \draw[black!60, line width=0.5pt] (-5.1,0.50) rectangle (-3.25,2.20);
  \node[font=\small] at (-4.88,1.97) {$f$};
  \draw[blueacc, line width=0.9pt] plot [smooth, tension=0.9]
    coordinates {(-4.95,1.10) (-4.62,1.57) (-4.3,1.75) (-3.98,1.27) (-3.55,0.87) (-3.38,1.05)};
  \foreach \sx/\sy/\lab in {-4.62/1.57/{\xi_1}, -4.3/1.75/{\xi_2}, -3.98/1.27/{\xi_3}, -3.55/0.87/{\xi_m}}{
    \draw[black!45, dashed, line width=0.45pt] (\sx,\sy) -- (\sx,0.50);
    \fill[red!75!black] (\sx,\sy) circle (1.6pt);
    \node[font=\tiny] at (\sx,0.28) {$\lab$};
  }

  % sampling -> branch
  \draw[flow] (-3.1,1.35) -- (-2.62,1.35);

  %--------------------------------------------------------
  % Branch net (encodes f)
  %--------------------------------------------------------
  \begin{scope}[shift={(0,1.35)}]
    \foreach \y/\n in {0.9/1,0.4/2,-0.1/3,-0.9/4}{
      \node[netnode] (bi\n) at (-1.0,\y) {};
      \node[netnode] (bh\n) at (-0.1,\y) {};
      \node[netnode] (bg\n) at (0.8,\y) {};
    }
    \node[outnode] (bo1) at (1.8,0.9) {$b_1$};
    \node[outnode] (bo2) at (1.8,0.4) {$b_2$};
    \node[outnode] (bo3) at (1.8,-0.1) {$b_3$};
    \node[outnode] (bo4) at (1.8,-0.9) {$b_\ell$};
    \foreach \x in {-1.0,-0.1,0.8,1.8}{\node[font=\scriptsize] at (\x,-0.44) {$\vdots$};}
    \foreach \i in {1,...,4}{\foreach \j in {1,...,4}{
      \draw[netline] (bi\i) -- (bh\j);
      \draw[netline] (bh\i) -- (bg\j);
      \draw[netline] (bg\i) -- (bo\j);}}
    \foreach \y/\lab in {0.9/{f(\xi_1)},0.4/{f(\xi_2)},-0.1/{f(\xi_3)},-0.9/{f(\xi_m)}}{
      \node[font=\tiny, anchor=east] at (-1.25,\y) {$\lab$};
    }
  \end{scope}
  \node[tag] at (0.4,2.65) {branch net};

  %--------------------------------------------------------
  % Trunk net (encodes the evaluation point x)
  %--------------------------------------------------------
  \begin{scope}[shift={(0,-1.35)}]
    \node[innode] (tx) at (-1.0,0) {$x$};
    \foreach \y/\n in {0.9/1,0.4/2,-0.1/3,-0.9/4}{
      \node[netnode] (th\n) at (-0.1,\y) {};
      \node[netnode] (tg\n) at (0.8,\y) {};
    }
    \node[outnode] (to1) at (1.8,0.9) {$t_1$};
    \node[outnode] (to2) at (1.8,0.4) {$t_2$};
    \node[outnode] (to3) at (1.8,-0.1) {$t_3$};
    \node[outnode] (to4) at (1.8,-0.9) {$t_\ell$};
    \foreach \x in {-0.1,0.8,1.8}{\node[font=\scriptsize] at (\x,-0.44) {$\vdots$};}
    \foreach \n in {1,...,4}{\draw[netline] (tx) -- (th\n);}
    \foreach \i in {1,...,4}{\foreach \j in {1,...,4}{
      \draw[netline] (th\i) -- (tg\j);
      \draw[netline] (tg\i) -- (to\j);}}
  \end{scope}
  \node[tag] at (0.4,-2.65) {trunk net};

  %--------------------------------------------------------
  % Inner product and output
  %--------------------------------------------------------
  \node[circle, draw=blueacc, line width=0.9pt, minimum size=0.6cm, inner sep=0pt]
    (dot) at (3.7,0) {$\odot$};
  \node[font=\tiny, black!60] at (4.25,-0.55) {inner product};

  \draw[flow] (2.2,1.10) -- (dot);
  \draw[flow] (2.2,-1.10) -- (dot);

  \node[rounded corners=2pt, draw=blueacc, line width=0.8pt, inner sep=5pt, anchor=west]
    (out) at (5.0,0) {$\mathcal{G}_\vartheta[\mathbf{f}_\xi](x)\approx\mathcal{G}[f](x)$};
  \draw[flow] (dot) -- (out);

\end{tikzpicture}
}
\caption{Structure of a DeepONet: the \emph{branch} net encodes the input function $f$ sampled at the $m$ sensors $\{\xi_1,\dots,\xi_m\}$, and the \emph{trunk} net encodes the evaluation point $x$; the inner product of their outputs, of latent dimension $\ell$, produces the approximation $\mathcal{G}_\vartheta [\mathbf{f}_\xi](x)$ of $\mathcal{G}[f](x)$.}
\label{fig:deeponet}
\end{figure}

\begin{remark}[Vector-valued outputs]\label{rem:multisalida}
The equation \eqref{eq:prediccion_deeponet} produces a single real value. When the operator to be approximated has $r$ components, that is, when
\[
\mathcal{G}[f](x)=\bigl(\mathcal{G}^{1}[f](x),\dots,\mathcal{G}^{r}[f](x)\bigr),
\]
a possible approach is to let the branch net produce $r\ell$ coefficients, divided into $r$ latent vectors, while the trunk net produces a shared vector of dimension $\ell$. Each output is obtained from the corresponding inner product, plus a trainable bias. Several input functions and scalar data can also be included by concatenating them into the branch input.
\end{remark}

\subsection{Supervised training of a DeepONet}

In the supervised training of a DeepONet one uses a set of input--output pairs. To this end, it is important to distinguish four elements \cite{Boulle, LuDeepONet}:
\begin{itemize}
    \item[\textbf{i)}] The \textit{sensors} $\{\xi_1,\dots,\xi_m\}$, at which each input function is sampled.
    \item[\textbf{ii)}] The \textit{input functions} $\{f^{(1)},\dots,f^{(N_{\mathrm{tr}})}\}$, which enter the network evaluated at the sensors; for each one the vector $\mathbf{f}_{\xi}^{(i)}=(f^{(i)}(\xi_1),\dots,f^{(i)}(\xi_m))$ is formed.
    \item[\textbf{iii)}] The \textit{query points} or \textit{observation points} $\{x_1,\dots,x_{N_{\mathrm{ob}}}\}$, at which the output is evaluated. These points belong to the domain of the functions of the output space.
    \item[\textbf{iv)}] The output values $\{\mathcal{G}[f^{(i)}](x_j)=u^{(i)}(x_j)\}$, corresponding to some input function $f^{(i)}$ and evaluated at some observation point $x_j$.
\end{itemize}
Therefore, the training dataset is
\[
\mathcal{D}_{\mathrm{tr}}=\Big\{(\mathbf{f}_{\xi}^{(i)},x_j,u^{(i)}(x_j)):\ i=1,\dots,N_{\mathrm{tr}},\ j=1,\dots,N_{\mathrm{ob}}\Big\},
\]
where $N_{\mathrm{tr}}$ is the number of input functions and $N_{\mathrm{ob}}$ is the number of query points per function.\\

The generation of these data consists of two phases. In the first one, the input functions $f^{(i)}$, $i=1,\dots,N_{\mathrm{tr}}$, are sampled from a distribution or from a previously defined family of functions. This choice must represent adequately the class of inputs on which the operator will be approximated. Once each function $f^{(i)}$ has been generated, it is evaluated at the set of sensors $\{\xi_1,\dots,\xi_m\}$, which produces the finite representation \eqref{eq:vector_entrada_deeponet} received by the branch net. Consequently, the number and the distribution of these sensors must be chosen in such a way that they allow one to represent adequately the functions belonging to the considered input family. In the second phase, for each function $f^{(i)},$ the corresponding output evaluated at the observation points $x_j$,
\[
u^{(i)}(x_j):=\mathcal{G}[f^{(i)}](x_j),
\]
is computed. When $\mathcal{G}$ is the solution operator associated with a differential equation, this stage requires to solve the problem for each one of the generated input functions by means of a numerical method.\\

A usual choice in order to quantify the difference between the predictions of the network and the reference solutions is the mean squared error. This (supervised) loss function is then defined as
\begin{equation}\label{eq:perdida_supervisada_deeponet}
    \mathcal{L}(\vartheta):=\frac{1}{N_{\mathrm{tr}}N_{\mathrm{ob}}}\sum_{i=1}^{N_{\mathrm{tr}}}\sum_{j=1}^{N_{\mathrm{ob}}}\bigl\Vert\mathcal{G}_{\vartheta}[\mathbf{f}_{\xi}^{(i)}](x_j)-u^{(i)}(x_j)\bigr\Vert_2^{2},
\end{equation}
and the training of the DeepONet is formulated by means of the optimization problem
\begin{equation}\label{eq:problema_optimizacion_deeponet}
    \vartheta^{\ast}\in\operatorname{arg}\underset{\vartheta}{\min}\ \mathcal{L}(\vartheta).
\end{equation}
In practice, \eqref{eq:problema_optimizacion_deeponet} is solved by a gradient descent method in which, at each optimization step, the sum in \eqref{eq:perdida_supervisada_deeponet} is restricted to a mini-batch of sample--point pairs. The theoretical justification of the architecture \eqref{eq:prediccion_deeponet} is given by the following universal approximation theorem for operators, proved in \cite{ChenChen}.

\begin{theo}[Universal approximation theorem for operators \cite{ChenChen}]\label{teo:aproximacion_operadores}
Assume that $\varphi$ is a continuous non-polynomial function, $X$ is a Banach space, $K_1\subset X$ and $K_2\subset\mathbb{R}^d$ are two compact sets in $X$ and $\mathbb{R}^d$, respectively, $V$ is a compact set in $C(K_1)$ and $\mathcal{G}:V\to C(K_2)$ is a nonlinear continuous operator. Then, for every $\varepsilon>0$ there exist positive integers $n$, $\ell$, $m$, constants $c_i^k,\ \omega_{ij}^k,\ \theta_i^k,\ \zeta_k\in\mathbb{R}$, $w_k\in\mathbb{R}^d$, $\xi_j\in K_1$, $i=1,\dots,n$, $k=1,\dots,\ell$, $j=1,\dots,m$, such that
\[
\left|\mathcal{G}[f](x)-\sum_{k=1}^{\ell}\underbrace{\sum_{i=1}^{n}c_i^k\varphi\Bigl(\sum_{j=1}^{m}\omega_{ij}^k f(\xi_j)+\theta_i^k\Bigr)}_{\text{branch}}\underbrace{\varphi(w_k\cdot x+\zeta_k)}_{\text{trunk}}\right|<\varepsilon
\]
holds for all $f\in V$ and $x\in K_2$.
\end{theo}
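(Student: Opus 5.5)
The plan follows Chen and Chen's original argument. It has two layers: a universal approximation result for continuous functions on compact subsets of $\mathbb{R}^d$, and a finite-dimensional reduction of the compact input set $V\subset C(K_1)$. The basic tool is the classical one-hidden-layer theorem: for continuous non-polynomial $\varphi$, sums $\sum_k c_k\varphi(w_k\cdot x+\zeta_k)$ are dense in $C(K)$ for every compact $K\subset\mathbb{R}^d$. I will take this as known; it follows from density of ridge functions plus the one-dimensional case via smoothing and differentiation in the parameters.

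I will need a uniform version of it. The family $\{\mathcal{G}[f]: f\in V\}$ is compact in $C(K_2)$, since it is the continuous image of a compact set. By Arzelà–Ascoli this family is equicontinuous. Fix an $\eta$-net $g_1,\dots,g_N$ of this family, and approximate each $g_r$ by a ridge sum. To obtain a common trunk I will instead use a partition-of-unity/interpolation construction, as in Chen–Chen. For each $f$, write
\[
\mathcal{G}[f](x)\approx\sum_{k=1}^{\ell}c_k(f)\,\varphi(w_k\cdot x+\zeta_k),
\]
with fixed trunk parameters $w_k,\zeta_k$ and coefficients $c_k(f)$ that depend continuously on $f\in V$. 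One way is to note that the span of $\{\varphi(w\cdot x+\zeta)\}$ is dense. Choosing finitely many trunk functions good for the finite net, and then blending the net coefficients via a continuous partition of unity on $V$ subordinate to the balls $\{f:\|\mathcal{G}[f]-g_r\|<\eta\}$, gives continuous functionals $c_k:V\to\mathbb{R}$ with uniform error $<\varepsilon/2$.

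Next I reduce each continuous functional $c_k$ on the compact set $V$ to a function of finitely many samples $f(\xi_1),\dots,f(\xi_m)$. Compactness of $V$ in $C(K_1)$ gives equicontinuity, so I pick a fine $\delta$-net $\{\xi_j\}\subset K_1$ and an associated continuous partition of unity $\{\psi_j\}$ on $K_1$. The map $f\mapsto \Pi f:=\sum_j f(\xi_j)\psi_j$ then satisfies $\|\Pi f-f\|_{C(K_1)}$ uniformly small on $V$. The set $V\cup\Pi(V)$ is compact, and $c_k$ extends continuously to it by Tietze/Dugundji. Uniform continuity then gives $c_k(f)\approx c_k(\Pi f)=:h_k(f(\xi_1),\dots,f(\xi_m))$, where $h_k$ is continuous on the compact set $\{(f(\xi_j))_j: f\in V\}\subset\mathbb{R}^m$.

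Finally I apply the one-layer theorem in $\mathbb{R}^m$ to each $h_k$, which produces the branch sums $\sum_i c_i^k\varphi(\sum_j\omega_{ij}^k f(\xi_j)+\theta_i^k)$. The total error is controlled by a triangle inequality, with the trunk terms bounded by $\max_{K_2}|\varphi(w_k\cdot x+\zeta_k)|$; I choose the branch tolerance as $\varepsilon/(4\ell M)$. I expect the main obstacle to be the first step: producing a \emph{common} trunk basis with \emph{continuous} coefficient functionals. The remaining steps are standard compactness and equicontinuity estimates.
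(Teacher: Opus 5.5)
The paper does not prove this theorem; it quotes it from Chen and Chen. Your outline follows their architecture: a common trunk with continuous coefficient functionals on the compact family $\mathcal{G}(V)$, then reduction of each functional to point samples through $\Pi f=\sum_j f(\xi_j)\psi_j$, then the finite-dimensional theorem and a triangle inequality. Your first step is correct as written and is not where the difficulty lies. Write $\varphi_k(x)=\varphi(w_k\cdot x+\zeta_k)$, and let $\{\chi_r\}$ be a partition of unity on $V$ with $\chi_r(f)>0$ only when $\|\mathcal{G}[f]-g_r\|<\eta$. Then $\mathcal{G}[f]-\sum_k c_k(f)\varphi_k=\sum_r\chi_r(f)\bigl(\mathcal{G}[f]-\sum_k a_k^{(r)}\varphi_k\bigr)$ is a convex combination of functions of norm $<2\eta$. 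Arzel\`a--Ascoli is not even needed there.

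The real gap is in the second step, in the order of quantifiers. You fix $\delta$, hence $\Pi=\Pi_\delta$, extend $c_k$ to the compact set $V\cup\Pi_\delta(V)$, and invoke uniform continuity there. But that modulus of continuity belongs to a set, and an extension, that depend on $\delta$. Nothing ensures that $\sup_{f\in V}\|\Pi_\delta f-f\|$ falls below it.

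This is not merely cosmetic. If $\Pi_\delta(V)\cap V=\emptyset$, the two compact sets are at positive distance. The function equal to $c_k$ on $V$ and to an arbitrary constant on $\Pi_\delta(V)$ is then a legitimate continuous extension, and $c_k(\Pi_\delta f)$ carries no information about $c_k(f)$. You need a modulus independent of $\delta$.

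The standard repair, which is what Chen and Chen do, is to take $\delta_n\to0$ and show that $V^*=V\cup\bigcup_n\Pi_{\delta_n}(V)$ is compact. A sequence in $V^*$ has infinitely many terms in $V$ or in one $\Pi_{\delta_n}(V)$. Otherwise it has terms $\Pi_{\delta_{n_i}}f_i$ with $n_i\to\infty$, and these converge to the limit in $V$ of a subsequence of $(f_i)$, because $\sup_{f\in V}\|\Pi_{\delta_n}f-f\|\to0$. You then extend $c_k$ once to $V^*$, take its modulus of uniform continuity there, and only afterwards choose $n$.

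Equivalently, extend $c_k$ once to all of $C(K_1)$ by Tietze ($V$ is closed). Then use the fact that a continuous function on a metric space is uniformly continuous relative to a compact subset. For each $k$ and $\varepsilon'>0$ there is $\rho_k>0$ with $|\tilde c_k(f)-\tilde c_k(g)|<\varepsilon'$ whenever $f\in V$ and $\|f-g\|<\rho_k$; prove this by contradiction, using a convergent subsequence in $V$. Choose $\delta$ with $\sup_{f\in V}\|\Pi_\delta f-f\|<\min_k\rho_k$ and set $h_k(y)=\tilde c_k\bigl(\sum_j y_j\psi_j\bigr)$. With this, the rest of your argument goes through; pad the branch sums with zero coefficients to get a common $n$.
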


%%%%%%%%%%%%%%%%%%%%%%%%%%%%%%%%%%%%%%%%%%%%%%%%%%%%%%%%%%%%%
% 3. OPERATOR LEARNING FOR PARAMETRIC OPTIMAL CONTROL
%%%%%%%%%%%%%%%%%%%%%%%%%%%%%%%%%%%%%%%%%%%%%%%%%%%%%%%%%%%%%
\section{Operator learning strategies for parametric optimal control problems}\label{sec:estrategias}

In this section we formulate, for a general parametric family of optimal control problems, the classical solver that will be used as a reference and the two strategies based on supervised DeepONets that are the object of this paper. The example of Section \ref{sec:ejemplo} is a particular instance of this framework.

\subsection{A parametric family of optimal control problems}\label{sec:problema_general}

Let $T>0$ be a fixed final time and let $\mathcal{Q}$ be a real Hilbert space and $\mathcal{Y}$ and $\mathcal{Z}$ be real Banach spaces, of functions defined on $(0,T)$, which we call the space of controls, the space of states and the space of adjoint states, respectively. Each individual optimal control problem of the family is determined by a finite dimensional datum
\begin{equation}\label{eq:dato_general}
\lambda=[\lambda_{e},\lambda_{J},\lambda_{U}]\in\Lambda\subset\mathbb{R}^{d_e}\times\mathbb{R}^{d_J}\times\mathbb{R}^{d_U},
\end{equation}
where $\Lambda$ is a compact set, $\lambda_e$ contains the data involved in the state equation (initial conditions and model parameters), $\lambda_J$ contains the data involved in the cost functional (weighting coefficients and desired states), and $\lambda_U$ contains the data defining the set of admissible controls. We assume the following hypotheses.

\begin{itemize}
\item[\textbf{(H1)}] \textit{(State equation)} For every $\lambda_U$, the set $\mathcal{U}_{ad}(\lambda_U)\subset\mathcal{Q}$ of admissible controls is nonempty, closed and convex and, for every $q\in\mathcal{U}_{ad}(\lambda_U)$ and every $\lambda_e$, the state equation
\[
E(y,q,\lambda_e)=0
\]
admits a unique solution $y\in\mathcal{Y}$. We denote by
\[
\mathcal{G}_{e}:\ [q,\lambda_e]\longmapsto y=y(q,\lambda_e)
\]
the corresponding \textit{state operator}, which is assumed to be continuous.

\item[\textbf{(H2)}] \textit{(Cost functional)} For a given  functional $J=J(y,q,\lambda_J),$ we consider the associated \textit{reduced functional}
\[
\widehat{J}_{\lambda}(q):=J\bigl(\mathcal{G}_{e}[q,\lambda_e],q,\lambda_J\bigr),
\]
which is assumed to admit directional derivatives at every admissible control along all feasible directions.

\item[\textbf{(H3)}] \textit{(Existence of optimal controls)} For every $\lambda_U$, it is assumed that the problem
\begin{equation}\label{eq:problema_general}
(P_\lambda)\qquad \min_{q\in\,\mathcal{U}_{ad}(\lambda_U)}\ \widehat{J}_{\lambda}(q)
\end{equation}
admits at least one solution.

\item[\textbf{(H4)}] \textit{(Adjoint representation of the gradient)} For every $q\in\mathcal{U}_{ad}$ and every $\lambda$, the adjoint equation
\[
A\bigl(p,\,\mathcal{G}_e[q,\lambda_e],q,\lambda_e,\lambda_J\bigr)=0
\]
admits a unique solution $p\in\mathcal{Z}$; and we denote the corresponding \textit{adjoint operator} by
\[
\mathcal{G}_{a}:\ [q,\lambda_e,\lambda_J]\longmapsto p=p(q,\lambda_e,\lambda_J).
\] Moreover, the gradient of the reduced functional admits the explicit representation
\begin{equation}\label{eq:gradiente_general}
\nabla\widehat{J}_{\lambda}(q)=\Gamma\bigl(\mathcal{G}_e[q,\lambda_e],\,\mathcal{G}_a[q,\lambda_e,\lambda_J],\,q,\,\lambda_J\bigr),
\end{equation}
where $\Gamma$ is an algebraic expression whose evaluation does not require to solve any additional differential equation.
\end{itemize}

See \cite{Hinze, Troltzsch} for the general theory of optimal control problems governed by differential equations, particularly for the sufficient conditions under which \textbf{(H1)}--\textbf{(H4)} hold.\\

Under \textbf{(H1)}--\textbf{(H4)}, if $\overline{q}\in\mathcal{U}_{ad}(\lambda_U)$ is a local optimal control for $(P_\lambda)$, the first-order necessary optimality condition reads
\begin{equation}\label{eq:desigualdad_variacional_general}
\bigl(\nabla\widehat{J}_{\lambda}(\overline{q}),\,q-\overline{q}\bigr)_{\mathcal{Q}}\ \geq\ 0,\qquad\forall\,q\in\mathcal{U}_{ad}(\lambda_U).
\end{equation}

The key point here is that, at each iteration, a descent method requires two functions: the state function and the adjoint function. These functions are the outputs of two operators, $\mathcal{G}_e$ and $\mathcal{G}_a$, whose inputs are the current control and a finite number of scalar values. This is the type of mapping that a DeepONet is designed to approximate.

\subsection{The classical reference solver}\label{sec:algoritmo_clasico}

As a reference method, we consider a descent algorithm which computes the reduced gradient by means of the adjoint representation \eqref{eq:gradiente_general}, and which we will refer to as the \textit{classical method}. Given the data $\lambda$ and an initial control $q^{0}\in\mathcal{U}_{ad}(\lambda_U)$, a sequence $\{q^{k}\}_{k\geq0}\subset\mathcal{U}_{ad}(\lambda_U)$ is constructed by repeating the following steps.\\

\noindent
\textbf{Step 1. (State and adjoint state)} Given the current control $q^k\in\mathcal{U}_{ad}(\lambda_U)$, compute the state $y^k=\mathcal{G}_e[q^k,\lambda_e]$ by solving the state equation forward in time, and compute the adjoint $p^k=\mathcal{G}_a[q^k,\lambda_e,\lambda_J]$ by solving the adjoint equation backward in time.\\

\noindent
\textbf{Step 2. (Cost functional and reduced gradient)} Compute the value of the cost functional $J_k:=\widehat{J}_{\lambda}(q^k)=J(y^k,q^k,\lambda_J)$ and the reduced gradient
\[
g^{k}:=\nabla\widehat{J}_{\lambda}(q^{k})=\Gamma(y^{k},p^{k},q^{k},\lambda_J)
\]
by means of \eqref{eq:gradiente_general}; observe that this second computation only requires the algebraic evaluation of $\Gamma$ at the two functions obtained in Step 1.\\

\noindent
\textbf{Step 3. (Projected descent)}
Choose a step size $\omega_k>0$, possibly by means of a line search,
and define
\begin{equation}\label{eq:actualizacion_general}
q^{k+1}
=
\operatorname{Proj}_{\mathcal U_{\mathrm{ad}}(\lambda_U)}
\bigl(q^k-\omega_k g^k\bigr).
\end{equation}
The projection guarantees that $q^{k+1}$ remains admissible. When a line search is employed, each trial step requires an additional evaluation of $\widehat J_\lambda$ and, consequently, an additional evaluation of the state operator.\\

\noindent
\textbf{Step 4. (Stopping criterion)} Stop whenever a prescribed test detects that the sequence has stabilized, for instance when the value of the cost functional has remained stable during a fixed number of consecutive iterations, or when the norm of the projected gradient has become smaller than a given tolerance. Stop also when a maximum number \texttt{maxIter} of iterations has been reached. Otherwise, set $k\leftarrow k+1$ and return to Step 1.\\

The result of this procedure is an approximate optimal control $q^{k}\in\mathcal{U}_{ad}(\lambda_U)$, along with the corresponding value $\widehat{J}_\lambda(q^k)$. It is important to note that each iterate generated by the equation \eqref{eq:actualizacion_general} is admissible by construction, since the projection onto $\mathcal{U}_{ad}(\lambda_U)$ is applied at every update, and also at each trial of the step size when a line search is employed. We say that the algorithm \textit{converges} for the pair $[q^{0},\lambda]$ when it stops because the stabilization test of Step 4 is fulfilled, that is, before exhausting the \texttt{maxIter} iterations, and we denote by $q^{*}$ the control returned in that case.\\

The stabilization test evaluates only the behavior of the sequence generated by the algorithm. Therefore, the fact that this sequence stabilizes does not guarantee that the control \(q^{*}\) obtained is a local optimum of \((P_\lambda)\), since the algorithm could terminate prematurely. To reduce this risk, we verify the computed control using the following validation test: Let $\delta>0$, let $\texttt{tol}_{\mathrm{op}}>0$, and let $\{v_1,\dots,v_M\}\subset\mathcal{Q}$ be a finite family of test directions, including the descent direction $-g^{*} $, where $g^{*}:=\nabla\widehat{J}_{\lambda}(q^{*})$. We accept the control $q^{*}$ if:
\begin{equation}\label{eq:filtro_general}
\widehat{J}_{\lambda}\Bigl(\operatorname{Proj}_{\mathcal{U}_{ad}(\lambda_U)}\bigl(q^{*}+\delta\,v_i\bigr)\Bigr)\ \geq\ \bigl(1-\texttt{tol}_{\mathrm{op}}\bigr)\,\widehat{J}_{\lambda}(q^{*}),\qquad i=1,\dots,M,
\end{equation}
that is, if none of the $M$ admissible perturbations of magnitude $\delta$ of $q^{*}$ reduces the value of the cost functional. The condition \eqref{eq:filtro_general} would serve as an additional numerical strategy to validate that $q^{*}$ is a local optimal control.\\

Once the numerical methods used in Step 1 have been specified, the criterion for computing \(g^{k}\) and \(\omega_k\), the stopping criterion parameters, the data \(\delta\), \(\texttt{tol}_{\mathrm{op}}\), and \(\{v_1,\ldots,v_M\}\) used in the validation test, the algorithm is completely defined on the set
\[
\Lambda^{*}:=\bigl\{[q^{0},\lambda]:\ q^{0}\in\mathcal{U}_{ad}(\lambda_U),\ \lambda\in\Lambda,\ \text{the algorithm converges for }[q^{0},\lambda]\ \text{ and }q^{*}\text{ satisfies }\eqref{eq:filtro_general}\bigr\},
\]
and we can define the \textit{optimal control operator}
\begin{equation}\label{eq:general_control_operator}
\mathcal{G}_{c}:\ [q^{0},\lambda]\in\Lambda^{*}\longmapsto q^{*}=q^{*}(q^0,\lambda),
\end{equation}
which assigns, to each initial control and to each set of parameters, the control returned by the procedure. In other words, $\mathcal{G}_{c}$ reproduces the response of the optimization algorithm on the set of data for which the algorithm converges and the returned control passes the validation test \eqref{eq:filtro_general}.

\subsection{The indirect strategy}\label{sec:estrategia_indirecta}

The \textit{indirect strategy} is based on two independent DeepONets that approximate the two solution operators appearing in Step 1 of the classical method, that is,
\begin{equation}\label{eq:operadores_indirecta}
\mathcal{G}_{e}:[q,\lambda_e]\longmapsto y,\qquad \mathcal{G}_{a}:[q,\lambda_e,\lambda_J]\longmapsto p.
\end{equation}
In both cases the control $q$ is represented by its values at $m$ sensors of $[0,T]$, and the scalar data $\lambda_e$ and $\lambda_J$ are concatenated to that vector of sampled values, as described in Remark \ref{rem:multisalida}; the trunk net receives the queried time $t\in[0,T]$. Denoting by $\mathcal{G}_{e}^{\vartheta_e}$ and $\mathcal{G}_{a}^{\vartheta_a}$ the corresponding trained networks, the \textit{indirect strategy} consists in running the classical method of Section \ref{sec:algoritmo_clasico} \textit{with Step 1 replaced by the two evaluations}
\[
y^{k}\approx\mathcal{G}_{e}^{\vartheta_e}[q^{k},\lambda_e],\qquad p^{k}\approx\mathcal{G}_{a}^{\vartheta_a}[q^{k},\lambda_e,\lambda_J].
\]
All the remaining steps in the algorithm are unchanged.\\

This strategy has two important consequences. First, the iterations remain exactly admissible, since the projection is still applied in each trial of the line search. Second, the strategy inherits the iterative nature of the classical method, so the online cost per problem is proportional to the number of iterations, although each iteration is considerably cheaper because it only requires two forward passes of the networks instead of two differential solvers.\\

Concerning the generation of the supervised datasets, each sample of this strategy requires one forward and one backward resolution, that is, the cost of Step 1 of a single iteration of the classical method. In addition, note that the two operators \eqref{eq:operadores_indirecta} do not depend on the algorithm, thus, it is possible to reuse the trained networks with a different descent algorithm or with different parameters of the same one, without generating new data.

\subsection{The direct strategy}\label{sec:estrategia_directa}

The \textit{direct strategy} employs a single DeepONet which approximates the optimal control operator \eqref{eq:general_control_operator},
\[
\mathcal{G}_{c}:[q^{0},\lambda]\longmapsto q^{*},
\]
where $q^{0}$ is the initial control through which the descent algorithm begins, and $q^{*}$ is the optimal control returned by the algorithm. The input to the branch network is now larger than in \eqref{eq:operadores_indirecta}, since the returned control depends on the whole datum $\lambda=[\lambda_e,\lambda_J,\lambda_U]$, including the bound $\lambda_U$ that defines the admissible set and does not enter \eqref{eq:operadores_indirecta}, and on the initial control $q^0$. Denoting the trained network by $\mathcal{G}_{c}^{\vartheta_c}$, a new optimal control is obtained through a single evaluation,
\[
q^{*}\approx\mathcal{G}_{c}^{\vartheta_c}[q^{0},\lambda],
\]
so that the iterative loop of the Section \ref{sec:algoritmo_clasico} disappears completely.\\

\subsection{Offline cost and online evaluation}\label{sec:balance_general}

In both strategies, the offline stage represents the cost of generating the supervised data and training the network only once. This cost is amortized over the online stage, which consists of evaluating the networks each time a new problem of the familiy is solved.\\

Let $T^{\mathrm{off}}$ be the total time of the offline stage, $\tau_{\mathrm{cl}}$ the average time required by the classical method to solve a problem from the family of optimal control problems, and $\tau_{\mathrm{on}}$ the average time of the corresponding online evaluation. The number of problems after which a strategy becomes advantageous in total time is:
\begin{equation}\label{eq:general_balance}
N^{*}=\frac{T^{\mathrm{off}}}{\tau_{\mathrm{cl}}-\tau_{\mathrm{on}}},
\end{equation}
provided that $\tau_{\mathrm{on}}<\tau_{\mathrm{cl}}$; otherwise, the offline stage is never offset. Below the threshold $N^{*}$, the classical method, which does not require a setup stage, is preferable.\\

For the indirect strategy, $T^{\mathrm{off}}$ is the sum of the time required to generate the supervised datasets; each sample of which requires solving one forward and one backward system, and the training times of the state and adjoint networks. On the other hand, $\tau_{\mathrm{on}}$ corresponds to the time for the full descent algorithm with the two solvers replaced by network evaluations.\\

For the direct strategy, $T^{\mathrm{off}}$ is the sum of the time required to generate its own datasets, which requires solving a full optimal control problem per sample, and the training time for the single network, while $\tau_{\mathrm{on}}$ is the time for a single evaluation of that network.

%%%%%%%%%%%%%%%%%%%%%%%%%%%%%%%%%%%%%%%%%%%%%%%%%%%%%%%%%%%%%
% 4. EXAMPLE
%%%%%%%%%%%%%%%%%%%%%%%%%%%%%%%%%%%%%%%%%%%%%%%%%%%%%%%%%%%%%
\section{An application: optimal cytotoxic and antiangiogenic therapies}\label{sec:ejemplo}

In this section we will apply the framework developed in Section \ref{sec:estrategias} to an optimal control problem governed by a reduced model for the growth of glioma cells in the presence of oxygen, in which the controls represent a chemotherapeutic and an antiangiogenic therapy.

\subsection{The state system and the optimal control problem}\label{sec:modelo}

Let $T>0$ be a fixed final time. We consider the state system
\begin{equation}\label{eq:sistema_EDO}
\begin{cases}
u'(t)=\rho(\sigma(t))u(t)(\alpha-u(t))-\kappa\,\ControlC(t)u(t)\sigma(t), & t\in(0,T),\\[1mm]
\sigma'(t)=\gamma(\beta-\sigma(t))-\dfrac{A_{ox}u(t)\sigma(t)}{k_{ox}+\sigma(t)}+(1-\ControlS(t))S_{c}u(t), & t\in(0,T),\\[2mm]
u(0)=u_0,\qquad \sigma(0)=\sigma_0,
\end{cases}
\end{equation}
where $u(t)$ denotes the tumor cell density and $\sigma(t)$ the oxygen concentration, both understood as average quantities over the tissue. The term $\rho(\sigma)u(\alpha-u)$ represents a logistic growth modulated by the oxygen concentration, where $\alpha>0$ is the carrying capacity and $\rho(\sigma)$ is the oxygen dependent proliferation rate; a prototype function used in the literature is
\begin{equation}\label{eq:rho}
\rho(\sigma)=\frac{\widehat{\rho}}{\alpha}\left(\frac{\sigma}{\beta}+b\left(1-\frac{\sigma}{\beta}\right)\right),
\end{equation}
where $\widehat{\rho}$ is the maximal proliferation rate under normoxic conditions and $b\in[0,1]$ modulates the extent to which low oxygen levels reduce proliferation \cite{Gomez2017, Leo2}. In general, $\rho:\mathbb{R}^{+}\to\mathbb{R}$ is assumed to be of class $C^{1}$, nonnegative and with bounded derivative. The term $-\kappa\,\ControlC u\sigma$ describes the decrease of the tumor cells produced by a cytotoxic drug, where $\kappa>0$ measures the maximal cytotoxic therapeutic intensity and the product $u\sigma$ reflects that the efficacy of the drug is modulated by the availability of oxygen. In the oxygen equation, $\gamma:=P_{er}S_{v}>0$ collects the vascular permeability and the vascular density, so that $\gamma(\beta-\sigma)$ represents the supply of oxygen from the vasculature towards the tissue with reference level $\beta>0$; the term $A_{ox}u\sigma/(k_{ox}+\sigma)$, with $A_{ox},k_{ox}>0$, describes the consumption of oxygen by the tumor cells according to Michaelis--Menten kinetics; and $(1-\ControlS)S_{c}u$, with $S_{c}>0$, models the contribution of the angiogenic factors released by the tumor, modulated by the antiangiogenic therapy.\\

The two therapies are represented by the controls $[\ControlC,\ControlS]$, which satisfy $0\leq\ControlC(t),\ControlS(t)\leq1$ a.e. $t\in[0,T]$. The following result guarantees that \eqref{eq:sistema_EDO} is well-posed on the interval $[0,T]$ and that the solution is bounded.

\begin{theo}\label{teo:existencia_edo}
Let $[\ControlC,\ControlS]\in [L^\infty(0,T)]^{2}$ be such that $0\leq\ControlC(t),\ControlS(t)\leq1$ a.e. $t\in[0,T]$. Assume that $0\leq u_0\leq\alpha$ and $0\leq\sigma_0\leq C_\sigma$, with $C_\sigma:=\beta+\frac{S_{c}\alpha}{\gamma}$. Then system \eqref{eq:sistema_EDO} admits a unique absolutely continuous solution $[u,\sigma]$ on $[0,T]$. Moreover, for almost every $t\in[0,T]$,
\[
0\leq u(t)\leq\alpha,\qquad 0\leq\sigma(t)\leq C_\sigma.
\]
\end{theo}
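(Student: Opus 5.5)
We argue via a truncated (Carathéodory) system and invariance of the box $[0,\alpha]\times[0,C_\sigma]$. We first replace the right-hand side of \eqref{eq:sistema_EDO} by a modified one in which $u$ and $\sigma$ are substituted by their truncations $\hat u:=\operatorname{Proj}_{[0,\alpha]}(u)$ and $\hat\sigma:=\operatorname{Proj}_{[0,C_\sigma]}(\sigma)$ inside all nonlinear terms. For instance, the oxygen consumption term becomes $A_{ox}\hat u\hat\sigma/(k_{ox}+\hat\sigma)$, and $\rho$ is evaluated at $\hat\sigma$.

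The modified right-hand side $F(t,u,\sigma)$ is measurable in $t$ and, since $\ControlC,\ControlS\in L^\infty$ take values in $[0,1]$, it is globally Lipschitz in $(u,\sigma)$ uniformly in $t$. This uses that $\rho$ is $C^1$ with bounded derivative on $[0,C_\sigma]$, that $k_{ox}+\hat\sigma\ge k_{ox}>0$, and that the truncations are $1$-Lipschitz. The Carathéodory existence theorem together with a Gronwall argument therefore gives a unique absolutely continuous global solution on $[0,T]$ of the truncated problem.

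The main step is to show that this solution stays in the box, so that it also solves \eqref{eq:sistema_EDO}. For nonnegativity of $u$, we write $u'=u\,a(t)$ with $a(t):=\rho(\hat\sigma)\frac{\hat u}{u}(\alpha-\hat u)\cdots$. It is cleaner to note that, after truncation, the right-hand side for $u$ has the form $\hat u\,\phi(t)$ with $\phi\in L^\infty$. Testing with the negative part $u^-$ gives $\frac{d}{dt}\frac12|u^-|^2=-u^-\hat u\phi=0$, since $\hat u=0$ wherever $u<0$. Hence $u^-\equiv0$ because $u_0\ge0$. For the upper bound we test with $(u-\alpha)^+$. Where $u>\alpha$ we have $\hat u=\alpha$, so $\rho(\hat\sigma)\hat u(\alpha-\hat u)=0$ and the cytotoxic term $-\kappa\ControlC\alpha\hat\sigma\le0$. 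Thus $\frac{d}{dt}\frac12|(u-\alpha)^+|^2\le0$, and $u\le\alpha$.

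For $\sigma$ we proceed similarly. Where $\sigma<0$, $\hat\sigma=0$ and the right-hand side equals $\gamma\beta+(1-\ControlS)S_c\hat u\ge0$, so testing with $-\sigma^-$ yields $\sigma\ge0$. Where $\sigma>C_\sigma$, $\hat\sigma=C_\sigma$ and the right-hand side is at most
\[
\gamma(\beta-C_\sigma)+S_c\alpha=0,
\]
by the definition of $C_\sigma$ and since the consumption term is nonpositive. Testing with $(\sigma-C_\sigma)^+$ gives $\sigma\le C_\sigma$.

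Once the solution lies in the box, the truncations are inactive, so it solves \eqref{eq:sistema_EDO}. Uniqueness for the original system follows because any absolutely continuous solution with values in the box is a solution of the truncated Lipschitz system. One can also argue locally: the original right-hand side is locally Lipschitz near the box, since $k_{ox}+\sigma$ stays away from $0$ for $\sigma\ge0$.

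The delicate point is the justification of the chain rule $\frac{d}{dt}\frac12|w^\pm|^2=w^\pm{}'w^\pm$ for absolutely continuous $w$, and the sign analysis holding only a.e. Both are standard. We also need $\rho$ to be defined on $[0,C_\sigma]$; since $\rho$ is assumed $C^1$ on $\mathbb{R}^+$, this is fine. Nonnegativity of $\rho$ is not even needed, since $\alpha-\hat u=0$ on the relevant set.
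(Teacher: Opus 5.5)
Your argument is correct, but it takes a different route from the paper.

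\textbf{The paper's route.} It works directly with the original system. Carathéodory theory gives a unique maximal solution on $[0,T_{max})$. The four bounds then follow successively by comparison with the constant sub- and supersolutions $0,\alpha$ for $u$ and $0,C_\sigma$ for $\sigma$. The order matters there: $u\ge0$ is needed for $\sigma\ge0$, and both are needed for the upper bounds. Finally, confinement to $[0,\alpha]\times[0,C_\sigma]$ rules out finite-time blow-up, so $T_{max}>T$.

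\textbf{Your route.} You truncate the nonlinearities, which makes the right-hand side globally Lipschitz and gives existence on all of $[0,T]$ at once. You then prove invariance of the box by testing with $u^-$, $(u-\alpha)^+$, $\sigma^-$ and $(\sigma-C_\sigma)^+$.

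\textbf{What each approach buys.} In your version:
\begin{itemize}
\item the four estimates are independent of each other (for instance, $u\le\alpha$ only uses $\hat\sigma\ge0$, which holds by construction);
\item you need neither a comparison lemma for Carathéodory equations nor a continuation argument;
\item the positive/negative-part technique is exactly what carries over to reaction--diffusion versions of the model.
\end{itemize}
The paper's proof is shorter and gets uniqueness for free from local Lipschitz continuity. In your proof, uniqueness among all absolutely continuous solutions, not only box-valued ones, rests on that same local Lipschitz argument near the box. You do mention it, so nothing is missing.

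The abandoned sentence introducing $a(t)$ should simply be deleted.
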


\begin{proof}
Let $[\ControlC, \ControlS]$ be fixed. The right-hand side of \eqref{eq:sistema_EDO} is measurable with respect to $t$ and locally Lipschitz with respect to $[u,\sigma]$, for almost every $t$. Indeed, the controls are measurable and bounded, $\rho$ is of class $C^1$ and hence locally Lipschitz, and the function $\sigma \mapsto \frac{\sigma}{k_{ox}+\sigma}$ is locally Lipschitz for $\sigma > -k_{ox}$. Therefore, by the existence theory under Carathéodory conditions \cite[Chapter 2]{Coddington}, there exists a unique maximal solution $[u,\sigma]$, absolutely continuous on $[0,T_{max})$ for some $T_{max} > 0$ and satisfying \eqref{eq:sistema_EDO} at almost every $t$. We will show that this solution remains bounded on $[0,T_{max})$ and hence can be extended to $[0,T]$.\\

We use a comparison argument. Set $f_u(t,u,\sigma)=\rho(\sigma)u(\alpha-u)-\kappa\ControlC u\sigma$. The constant function $v\equiv0$ satisfies $v'(t)=0=f_u(t,0,\sigma(t))$ and $v(0)=0\leq u_0$, so that $v=0$ is a subsolution of \eqref{eq:sistema_EDO}$_1$ and, by comparison, $u(t)\geq0$. Analogously, setting $f_\sigma(t,u,\sigma)=\gamma(\beta-\sigma)-\frac{A_{ox}u\sigma}{k_{ox}+\sigma}+(1-\ControlS)S_{c}u$ and using $u\geq0$ and $0\leq\ControlS\leq1$, the constant function $v\equiv0$ satisfies $v'(t)=0\leq f_\sigma(t,u(t),0)$ and $v(0)=0\leq\sigma_0$, whence $\sigma(t)\geq0$. Next, since $u\geq0$, $\sigma\geq0$ and $\ControlC\geq0$, from \eqref{eq:sistema_EDO}$_1$ we get
\[
u'(t)=\rho(\sigma(t))u(t)(\alpha-u(t))-\kappa\ControlC(t)u(t)\sigma(t)\leq\rho(\sigma(t))u(t)(\alpha-u(t)),
\]
and the constant function $y\equiv\alpha$ solves $y'=\rho(\sigma(t))y(\alpha-y)$, $y(0)=\alpha$. Since $u_0\leq\alpha$, comparison yields $u(t)\leq\alpha$, and therefore $0\leq u(t)\leq\alpha$. Finally, using $u\leq\alpha$, $0\leq\ControlS\leq1$ and the nonnegativity of $u$ and $\sigma$, from \eqref{eq:sistema_EDO}$_2$ we obtain
\[
\sigma'(t)\leq\gamma(\beta-\sigma(t))+S_{c}\alpha=\gamma\bigl(C_\sigma-\sigma(t)\bigr).
\]
Let $z$ be the solution of $z'=\gamma(C_\sigma-z)$, $z(0)=C_\sigma$, that is, $z\equiv C_\sigma$. Since $\sigma_0\leq C_\sigma$, comparison gives $\sigma(t)\leq C_\sigma$ and hence $0\leq\sigma(t)\leq C_\sigma$ a.e. $t$. These bounds show that the solution remains in the compact set $[0,\alpha] \times [0,C_\sigma]$, on which the right-hand side of \eqref{eq:sistema_EDO} is bounded; hence the maximal solution cannot cease to exist in finite time and $T_{max} > T$, which finishes the proof.
\end{proof}

From a biomedical perspective, the objective is to reduce the tumor density and to regulate the oxygen concentration towards a desired state, while the administered dosages are minimized in order to mitigate the adverse effects of the therapies. Consequently, given nonnegative and bounded weighting functions $k_i\in L^\infty(0,T)$, $i=1,\dots,4$, and constants $l_1,l_2\geq0$, we consider the cost functional
\begin{equation}\label{eq:funcional_costo}
\begin{split}
J([u(\ControlC,\ControlS),\sigma(\ControlC,\ControlS),\ControlC,\ControlS])={}&\int_0^{T}\left[\frac{k_1}{2}u^{2}+\frac{k_2}{2}(\sigma-\sigma_d)^{2}+k_3\ControlC+k_4\ControlS\right](t)\,dt\\
&+\frac{l_1}{2}u(T)^{2}+\frac{l_2}{2}(\sigma(T)-\sigma_{dT})^{2},
\end{split}
\end{equation}
where $\sigma_d\in L^\infty(0,T)$ represents the desired oxygen state along the whole interval $(0,T)$ and $\sigma_{dT}\in\mathbb{R}$ the desired oxygen value at the final time $T$. The terms $\frac{k_1}{2}u^{2}$ and $\frac{l_1}{2}u(T)^2$ reduce the tumor cell density during the treatment and at the end of the therapeutic time, respectively; the terms $\frac{k_2}{2}(\sigma-\sigma_d)^{2}$ and $\frac{l_2}{2}(\sigma(T)-\sigma_{dT})^{2}$ express the interest in keeping the oxygen close to a reference level, avoiding both hypoxia and an excessive vascularization; and the terms $k_3\ControlC$ and $k_4\ControlS$ account for the amount of administered drug.\\

The set of admissible controls is defined by
\begin{equation}\label{eq:conjunto_admisible}
\mathcal{U}_{ad}:=\left\{[\ControlC,\ControlS]\in [L^\infty(0,T)]^{2}:\ 0\leq\ControlC(t),\ControlS(t)\leq1\ \text{ a.e. in }(0,T),\ \int_0^{T}\ControlC(t)\,dt\leq c_{\max}\right\},
\end{equation}
where $0<c_{\max}<T$ denotes the maximal total amount of cytotoxic therapy that can realistically be administered with standard drugs. The pointwise constraints represent that the intensity of a therapy can neither exceed a physiologically admissible maximum nor be negative, whereas the integral constraint on $\ControlC$ represents the limit of the accumulated dose of the cytotoxic drug. Since the states depend on the controls, we consider the reduced functional
\[
\widehat{J}([\ControlC,\ControlS]):=J([u(\ControlC,\ControlS),\sigma(\ControlC,\ControlS),\ControlC,\ControlS]),
\]
and thus, the optimization problem to be analyzed is
\begin{equation}\label{eq:problema_control}
(P)\quad\left\{
\begin{array}{ll}
\min & \widehat{J}([\ControlC,\ControlS])\\[2mm]
\text{subject to} & \left\{\begin{array}{l}
[u(\ControlC,\ControlS),\sigma(\ControlC,\ControlS)]\ \text{is the solution of the state system \eqref{eq:sistema_EDO}}\\
\text{associated with the admissible controls }[\ControlC,\ControlS]\in\mathcal{U}_{ad}.
\end{array}\right.
\end{array}\right.
\end{equation}
A pair $[\bar \ControlC, \bar \ControlS] \in \mathcal{U}_{ad}$ is called a \textit{global optimal control} of (P) when it minimizes $\widehat{J}$ on the whole set $\mathcal{U}_{ad}$, and a \textit{local optimal control} when it minimizes $\widehat{J}$ only on a neighborhood of $[\bar \ControlC, \bar \ControlS]$ in $\mathcal{U}_{ad}$. The following result, whose proof follows a standard minimizing sequence argument, guarantees that the optimal control problem is solvable.

\begin{theo}\label{teo:existencia_control}
Assume that $0\leq u_0\leq\alpha$ and $0\leq\sigma_0\leq C_\sigma$. Then problem \eqref{eq:problema_control} admits at least one global optimal control.
\end{theo}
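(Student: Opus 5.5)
We follow the direct method of the calculus of variations, exploiting the uniform bounds of Theorem~\ref{teo:existencia_edo}.

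\textbf{Minimizing sequence.} Since $\mathcal{U}_{ad}$ is nonempty (it contains $[0,0]$) and, by Theorem~\ref{teo:existencia_edo}, every state satisfies $0\le u\le\alpha$ and $0\le\sigma\le C_\sigma$, the functional $\widehat J$ is bounded from below by $0$ (all its terms are nonnegative because $k_i,l_i\ge0$ and the controls are nonnegative). We therefore set $j:=\inf_{\mathcal{U}_{ad}}\widehat J\in[0,\infty)$ and take a minimizing sequence $[\ControlC_n,\ControlS_n]\subset\mathcal{U}_{ad}$ with $\widehat J([\ControlC_n,\ControlS_n])\to j$.

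\textbf{Compactness.} The controls are bounded in $[L^\infty(0,T)]^2$, so, up to a subsequence, $[\ControlC_n,\ControlS_n]\rightharpoonup[\bar\ControlC,\bar\ControlS]$ weakly-$*$ in $L^\infty$, and hence weakly in $L^2$. The set $\mathcal{U}_{ad}$ is convex and strongly closed in $L^2$ (the pointwise bounds pass to a.e.\ limits of subsequences, and the integral is continuous), so it is weakly closed and $[\bar\ControlC,\bar\ControlS]\in\mathcal{U}_{ad}$. For the states $[u_n,\sigma_n]$, the bounds of Theorem~\ref{teo:existencia_edo} together with the ODE yield uniform $L^\infty$ bounds on $u_n',\sigma_n'$. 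Hence the states are equi-Lipschitz, and by Arzel\`a--Ascoli a subsequence converges uniformly on $[0,T]$ to some $[\bar u,\bar\sigma]$.

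\textbf{Passage to the limit in the state system.} This is the main obstacle, because the controls appear multiplied by the states. We write the system in integral form, $u_n(t)=u_0+\int_0^t[\rho(\sigma_n)u_n(\alpha-u_n)-\kappa\ControlC_n u_n\sigma_n]\,ds$, and similarly for $\sigma_n$. The terms without controls converge uniformly, since $\rho$ and $\sigma\mapsto\sigma/(k_{ox}+\sigma)$ are continuous on the relevant bounded sets. For the bilinear terms we split
\[
\ControlC_n u_n\sigma_n-\bar\ControlC\bar u\bar\sigma=\ControlC_n\bigl(u_n\sigma_n-\bar u\bar\sigma\bigr)+(\ControlC_n-\bar\ControlC)\bar u\bar\sigma .
\]
The first part tends to $0$ in $L^1$ by the uniform convergence of the states and $|\ControlC_n|\le1$. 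The second part tends to $0$ after integration over $(0,t)$, by weak-$*$ convergence tested against $\mathbf 1_{(0,t)}\bar u\bar\sigma\in L^1$. The term $\ControlS_n S_c u_n$ is treated in the same way. Therefore $[\bar u,\bar\sigma]$ solves \eqref{eq:sistema_EDO} with controls $[\bar\ControlC,\bar\ControlS]$, and by the uniqueness in Theorem~\ref{teo:existencia_edo} it equals the state $[u(\bar\ControlC,\bar\ControlS),\sigma(\bar\ControlC,\bar\ControlS)]$.

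\textbf{Lower semicontinuity.} The tracking terms $\int\frac{k_1}{2}u_n^2$ and $\int\frac{k_2}{2}(\sigma_n-\sigma_d)^2$, as well as the terminal terms, converge by uniform convergence of the states. The control terms $\int k_3\ControlC_n$ and $\int k_4\ControlS_n$ are linear, and they converge by weak-$*$ convergence since $k_3,k_4\in L^\infty\subset L^1$. Hence $\widehat J([\bar\ControlC,\bar\ControlS])=\lim_n\widehat J([\ControlC_n,\ControlS_n])=j$, so $[\bar\ControlC,\bar\ControlS]$ is a global optimal control of \eqref{eq:problema_control}.
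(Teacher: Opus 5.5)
Your proposal is correct and is exactly the standard minimizing-sequence argument that the paper invokes without writing out: uniform bounds from Theorem~\ref{teo:existencia_edo}, weak-$*$ compactness of the controls, Arzel\`a--Ascoli for the equi-Lipschitz states, and passage to the limit in the control-affine terms via strong-times-weak convergence. The last step, continuity of $\widehat J$ under these convergences, completes the argument, so your write-up supplies the details the paper leaves implicit.
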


Problem \eqref{eq:problema_control} fits into the framework of Section \ref{sec:problema_general} with
\begin{equation}\label{eq:identificacion_lambda}
q=[\ControlC,\ControlS],\qquad \lambda_e=[u_0,\sigma_0],\qquad \lambda_J=[k_1,k_2,k_3,k_4,l_1,l_2],\qquad \lambda_U=[c_{\max}],
\end{equation}
the remaining parameters of the model being kept fixed, the space of controls being $\mathcal{Q}=[L^2(0,T)]^2$ and the spaces of states and adjoint being $\mathcal{Y} = \mathcal{Z} = [C([0,T])]^2$. Hypothesis \textbf{(H1)} is then given by Theorem \ref{teo:existencia_edo} and \textbf{(H3)} by Theorem \ref{teo:existencia_control}, whereas \textbf{(H2)} and \textbf{(H4)} are the usual differentiability and adjoint representation properties for this class of problems \cite{Hinze, Troltzsch}; since we will follow a {\it discretize-then-optimize approach}, both are verified directly at the discrete level in Section \ref{sec:esquema}. The weights $k_i$ and $l_j$ represent the clinical decision about what is privileged in each case, and different choices give rise to different optimal treatments; this is precisely the source of the parametric family that motivates the strategies of Section \ref{sec:estrategias}.

\subsection{A discretize-then-optimize scheme}\label{sec:esquema}
In this subsection we describe the discretization on which the classical method of Section \ref{sec:algoritmo_clasico} is built for this example. We follow a \textit{discretize-then-optimize} approach, this is, the state system and the cost functional are discretized first, and the sensitivity system, the adjoint system, the reduced functional and its gradient are then derived at the discrete level. In this way, the gradient that is used in the descent method is the exact gradient of the discrete cost functional, and no inconsistency between the discrete functional and its computed gradient is introduced.\\

We consider a uniform partition of $[0,T]$ with time step $h=T/N$ and nodes $t_n=nh$, $n=0,\dots,N$, and we write $u^{n}\approx u(t_n)$, $\sigma^{n}\approx\sigma(t_n)$, $k_i^{n}=k_i(t_n)$ and $\sigma_d^{n}=\sigma_d(t_n)$. The controls are taken piecewise constant, with value $\ControlC^{n}$ and $\ControlS^{n}$ on the interval $(t_{n-1},t_n]$, $n=1,\dots,N$, so that a pair of discrete controls is identified with a vector
\[
[\ControlC,\ControlS]=\bigl[(\ControlC^{1},\dots,\ControlC^{N}),(\ControlS^{1},\dots,\ControlS^{N})\bigr]\in\mathbb{R}^{N}\times\mathbb{R}^{N}.
\]
For $[\ControlC, \ControlS]$ and $[\bar \ControlC, \bar \ControlS]$ in $\mathbb{R}^N \times \mathbb{R}^N$, we introduce the discrete $L^2$ inner product and its induced norm
\[
([\ControlC,\ControlS], [\bar \ControlC, \bar \ControlS])_h := h \sum_{j=1}^N (\ControlC^j \bar \ControlC^j + \ControlS^j\bar \ControlS^j), \qquad \|[\ControlC, \ControlS]\|_h^2 := h\sum_{j=1}^N ((\ControlC^j)^2 + (\ControlS^j)^2).
\]

\subsubsection*{Discretization of the cost functional}

In order to discretize the cost functional \eqref{eq:funcional_costo} we use the trapezoidal rule
$\int_a^{b}f\approx \Delta x\bigl[\frac{f(x_0)}{2}+\sum_{i=1}^{M-1}f(x_i)+\frac{f(x_M)}{2}\bigr]$, with $\Delta x = (b-a)/M$ and $x_i = a + i \Delta x$, for the quadratic terms. Thus, the discrete cost functional is given by
\begin{equation}\label{eq:funcional_discreto}
\begin{split}
J_h(u,\sigma,\ControlC,\ControlS)={}&h\sum_{n=1}^{N-1}\Bigl[\frac{k_1^{n}}{2}(u^{n})^{2}+\frac{k_2^{n}}{2}(\sigma^{n}-\sigma_d^{n})^{2}\Bigr]
+\frac{h}{2}\Bigl[\frac{k_1^{0}}{2}(u^{0})^{2}+\frac{k_2^{0}}{2}(\sigma^{0}-\sigma_d^{0})^{2}\Bigr]\\
&+\frac{h}{2}\Bigl[\frac{k_1^{N}}{2}(u^{N})^{2}+\frac{k_2^{N}}{2}(\sigma^{N}-\sigma_d^{N})^{2}\Bigr]
+h\sum_{n=1}^{N}\bigl[k_3^{n}\ControlC^{n}+k_4^{n}\ControlS^{n}\bigr]\\
&+\frac{l_1}{2}(u^{N})^{2}+\frac{l_2}{2}(\sigma^{N}-\sigma_{dT})^{2},
\end{split}
\end{equation}
where the control terms keep the indexation $n=1,\dots,N$ of the numerical scheme, since $\ControlC$ and $\ControlS$ are constant on each interval $(t_{n-1},t_n]$ and their integral does not depend on the chosen quadrature rule.

\subsubsection*{Numerical scheme for the state variables}

\noindent
\textbf{Initialization:} Let $u^{0}=u_0$ and $\sigma^{0}=\sigma_0$ with $0\leq u_0\leq\alpha$ and $0\leq\sigma_0\leq\beta+\frac{S_{c}\alpha}{\gamma}$.\\

\noindent
\textbf{Time step $n$:} Given $u^{n-1}$ and $\sigma^{n-1}$, compute $u^{n}$ and $\sigma^{n}$ from
\begin{equation}\label{eq:esquema_estados}
\begin{cases}
\dfrac{u^{n}-u^{n-1}}{h}=\rho(\sigma^{n-1})u^{n-1}(\alpha-u^{n})-\kappa\,\ControlC^{n}u^{n}\sigma^{n-1},\\[3mm]
\dfrac{\sigma^{n}-\sigma^{n-1}}{h}=\gamma(\beta-\sigma^{n})-\dfrac{A_{ox}u^{n}\sigma^{n}}{k_{ox}+\sigma^{n-1}}+(1-\ControlS^{n})S_{c}u^{n}.
\end{cases}
\end{equation}

\begin{prop}\label{prop:esquema_estados}
For every $h>0$ and every pair of discrete controls with $\ControlC^{n}\geq0$ and $0 \leq \ControlS^{n}\leq1$, scheme \eqref{eq:esquema_estados} determines $u^{n}$ and $\sigma^{n}$ in a unique way. Moreover,
\[
0\leq u^{n}\leq\alpha,\qquad 0\leq\sigma^{n}\leq\beta+\frac{S_{c}\alpha}{\gamma},\qquad n=0,\dots,N.
\]
\end{prop}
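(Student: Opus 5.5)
The proof will be an induction on $n$. The key observation is that the scheme \eqref{eq:esquema_estados} is linear in the unknowns and decoupled: the first equation is linear in $u^n$ alone, and once $u^n$ is known the second is linear in $\sigma^n$ alone. Assume $0\leq u^{n-1}\leq\alpha$ and $0\leq\sigma^{n-1}\leq C_\sigma:=\beta+S_c\alpha/\gamma$; the case $n=0$ holds by the initialization.

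\emph{Step 1 (tumor equation).} Multiplying by $h$ and collecting terms gives
\[
\bigl(1+h\rho(\sigma^{n-1})u^{n-1}+h\kappa\ControlC^n\sigma^{n-1}\bigr)u^n=u^{n-1}+h\rho(\sigma^{n-1})u^{n-1}\alpha .
\]
Since $\rho\geq0$, $u^{n-1}\geq0$, $\ControlC^n\geq0$ and $\sigma^{n-1}\geq0$, the coefficient $D_u$ on the left is at least $1$. Hence $u^n$ is uniquely determined and nonnegative. For the upper bound I would compute
\[
D_u(\alpha-u^n)=\alpha-u^{n-1}+h\kappa\ControlC^n\sigma^{n-1}\alpha\geq0 ,
\]
using $u^{n-1}\leq\alpha$, which gives $u^n\leq\alpha$.

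\emph{Step 2 (oxygen equation).} This is linear in $\sigma^n$:
\[
\Bigl(1+h\gamma+\frac{hA_{ox}u^n}{k_{ox}+\sigma^{n-1}}\Bigr)\sigma^n=\sigma^{n-1}+h\gamma\beta+h(1-\ControlS^n)S_cu^n .
\]
Because $k_{ox}+\sigma^{n-1}>0$ and $u^n\geq0$, the coefficient $D_\sigma$ is at least $1+h\gamma>0$. This gives uniqueness, and since the right-hand side is nonnegative (using $\ControlS^n\leq1$), also $\sigma^n\geq0$. For the upper bound, I use $\sigma^n\geq0$ to drop the consumption term, which gives
\[
(1+h\gamma)\sigma^n\leq\sigma^{n-1}+h\gamma\beta+hS_c\alpha\leq C_\sigma+h\gamma C_\sigma ,
\]
because $\gamma C_\sigma=\gamma\beta+S_c\alpha$ and $\ControlS^n\geq0$. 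Hence $\sigma^n\leq C_\sigma$.

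The only delicate point is the upper bound for $u^n$. It relies on the specific implicit–explicit splitting, where $\alpha-u^n$ is implicit and the cytotoxic term is implicit in $u^n$. Rewriting the identity for $\alpha-u^n$ handles it. Throughout, I must keep the order of the induction, proving $u^n\geq0$ before using it in the $\sigma^n$ step. No restriction on $h$ is needed because every coefficient is of the form $1+h(\text{nonnegative})$.
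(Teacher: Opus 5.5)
Your proof is correct and follows essentially the same route as the paper. Both use induction on $n$, solve each decoupled linear equation with a coefficient of the form $1+h(\text{nonnegative})$, get nonnegativity from the numerators, and obtain the upper bounds by rewriting for $\alpha-u^n$ and for $C_\sigma-\sigma^n$ after discarding the consumption term. The only cosmetic difference is that you multiply $\alpha-u^n$ by the full coefficient $D_u$, producing the term $h\kappa\ControlC^n\sigma^{n-1}\alpha$, whereas the paper uses the factor $1+h\rho(\sigma^{n-1})u^{n-1}$ and the term $h\kappa\ControlC^n u^n\sigma^{n-1}$ (which needs $u^n\geq0$ first); both identities are valid.
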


\begin{proof}
Let $n \geq 0$ be fixed and assume (induction hypothesis) $0\leq u^{n-1}\leq\alpha$ and $0\leq\sigma^{n-1}\leq\beta+\frac{S_{c}\alpha}{\gamma}$. Equation \eqref{eq:esquema_estados}$_1$ can be written as
\begin{equation}\label{eq:un_explicito}
u^{n}=\frac{u^{n-1}\bigl(1+h\alpha\rho(\sigma^{n-1})\bigr)}{1+h\rho(\sigma^{n-1})u^{n-1}+h\kappa \ControlC^{n}\sigma^{n-1}}.
\end{equation}
The denominator is greater than or equal to one, hence there exists a unique $u^{n}$. Moreover, since $\ControlC^{n}\geq0$ and $\rho(\sigma^{n-1})\geq0$, we get $u^{n}\geq0$.  For the upper bound, adding $\alpha-u^{n}$ on both sides of \eqref{eq:esquema_estados}$_1$, gives
\[
\bigl(1+h\rho(\sigma^{n-1})u^{n-1}\bigr)(\alpha-u^{n})=(\alpha-u^{n-1})+h\kappa \ControlC^{n}u^{n}\sigma^{n-1}\geq\alpha-u^{n-1}\geq0,
\]
where we used that $u^{n-1} \leq \alpha$. As $1+h\rho(\sigma^{n-1})u^{n-1}>0$, this yields $u^{n}\leq\alpha$. Once $u^{n}$ has been computed, equation \eqref{eq:esquema_estados}$_2$ is equivalent to
\begin{equation}\label{eq:sigman_explicito}
\sigma^{n}=\frac{\sigma^{n-1}+h\gamma\beta+h(1-\ControlS^{n})S_{c}u^{n}}{1+h\gamma+h\frac{A_{ox}u^{n}}{k_{ox}+\sigma^{n-1}}}.
\end{equation}
Analogously, the denominator is greater than or equal to one, so that there exists a unique $\sigma^{n}$; and since $\ControlS^{n}\leq1$ and $u^{n}\geq0$, the numerator is nonnegative and hence $\sigma^{n}\geq0$. Now discarding the nonpositive term $-h\frac{A_{ox}u^{n}\sigma^{n}}{k_{ox}+\sigma^{n-1}}$ and using that $(1-\ControlS^{n})u^{n}\leq u^{n}\leq\alpha$ on \eqref{eq:esquema_estados}$_2$, we get $\sigma^{n}-\sigma^{n-1}\leq h\gamma(\beta-\sigma^{n})+hS_{c}\alpha$. Adding $\bigl(\beta+\frac{S_{c}\alpha}{\gamma}\bigr)-\sigma^{n}$ in both sides, this becomes
\[
(1+h\gamma)\Bigl[\Bigl(\beta+\frac{S_{c}\alpha}{\gamma}\Bigr)-\sigma^{n}\Bigr]\geq\Bigl(\beta+\frac{S_{c}\alpha}{\gamma}\Bigr)-\sigma^{n-1}\geq0.
\]
As $1+h\gamma>0$, this yields $\sigma^{n}\leq\beta+\frac{S_{c}\alpha}{\gamma}$, which concludes the induction step and, since the bounds hold for $n=0$, the proof.
\end{proof}

\subsubsection*{The discrete sensitivity problem}
By \eqref{eq:un_explicito} and \eqref{eq:sigman_explicito}, each $u^{n}$ and each $\sigma^{n}$ are obtained from the discrete controls by a finite composition of rational expressions whose denominators are greater than or equal to one when $0\leq\ControlC^{n},\ControlS^{n}\leq1$, in addition, since $\rho$ is of class $C^{1}$, the discrete states, and therefore also the discrete cost functional \eqref{eq:funcional_discreto}, are continuously differentiable functions of the discrete controls in an open neighborhood of $[0,1]^{2N}$.\\

Fix $j\in\{1,\dots,N\}$ and consider the sensitivities of the state variables with respect to the components $\ControlC^{j}$ and $\ControlS^{j}$,
\[
U_c^{n}=\frac{\partial u^{n}}{\partial \ControlC^{j}},\quad V_c^{n}=\frac{\partial\sigma^{n}}{\partial \ControlC^{j}},\quad U_s^{n}=\frac{\partial u^{n}}{\partial \ControlS^{j}},\quad V_s^{n}=\frac{\partial\sigma^{n}}{\partial \ControlS^{j}},
\]
for $n\geq j$. Note that $u^n$ and $\sigma^n$ depend only on the values of the controls on the first $n$ subintervals, so that all these sensitivities vanish for $n < j$. Taking into account that
\[
\frac{\partial \ControlC^{n}}{\partial \ControlC^{j}}=\delta_{nj},\qquad\frac{\partial \ControlS^{n}}{\partial \ControlS^{j}}=\delta_{nj},
\]
where $\delta_{nj}$ denotes the Kronecker delta, and differentiating \eqref{eq:esquema_estados}, we obtain the following scheme for the sensitivity problem with respect to $\ControlC$ and $\ControlS$.\\

\noindent
\textbf{Initialization:} Let $U_c^{0}=0$ and $V_c^{0}=0$.\\

\noindent
\textbf{Time step $n=1,\dots,N$:} Given $U_c^{n-1}$ and $V_c^{n-1}$, compute $U_c^{n}$ and $V_c^{n}$ from
\begin{equation}\label{eq:sensibilidad_c}
\begin{cases}
\begin{aligned}
\dfrac{U_c^{n}-U_c^{n-1}}{h}={}&\rho'(\sigma^{n-1})u^{n-1}(\alpha-u^{n})V_c^{n-1}+\rho(\sigma^{n-1})(\alpha-u^{n})U_c^{n-1}-\rho(\sigma^{n-1})u^{n-1}U_c^{n}\\
&-\kappa \ControlC^{n}u^{n}V_c^{n-1}-\kappa \ControlC^{n}\sigma^{n-1}U_c^{n}-\kappa u^{n}\sigma^{n-1}\delta_{nj},
\end{aligned}\\[6mm]
\begin{aligned}
\dfrac{V_c^{n}-V_c^{n-1}}{h}={}&-\gamma V_c^{n}-\frac{A_{ox}\sigma^{n}}{k_{ox}+\sigma^{n-1}}U_c^{n}-\frac{A_{ox}u^{n}}{k_{ox}+\sigma^{n-1}}V_c^{n}\\
&+\frac{A_{ox}u^{n}\sigma^{n}}{(k_{ox}+\sigma^{n-1})^{2}}V_c^{n-1}+(1-\ControlS^{n})S_{c}U_c^{n}.
\end{aligned}
\end{cases}
\end{equation}

Analogously, we obtain the sensitivity problem with respect to $\ControlS$:\\

\noindent
\textbf{Initialization:} Let $U_s^{0}=0$ and $V_s^{0}=0$.\\

\noindent
\textbf{Time step $n=1,\dots,N$:} Given $U_s^{n-1}$ and $V_s^{n-1}$, compute $U_s^{n}$ and $V_s^{n}$ from
\begin{equation}\label{eq:sensibilidad_s}
\begin{cases}
\begin{aligned}
\dfrac{U_s^{n}-U_s^{n-1}}{h}={}&\rho'(\sigma^{n-1})u^{n-1}(\alpha-u^{n})V_s^{n-1}+\rho(\sigma^{n-1})(\alpha-u^{n})U_s^{n-1}-\rho(\sigma^{n-1})u^{n-1}U_s^{n}\\
&-\kappa \ControlC^{n}u^{n}V_s^{n-1}-\kappa \ControlC^{n}\sigma^{n-1}U_s^{n},
\end{aligned}\\[6mm]
\begin{aligned}
\dfrac{V_s^{n}-V_s^{n-1}}{h}={}&-\gamma V_s^{n}-\frac{A_{ox}\sigma^{n}}{k_{ox}+\sigma^{n-1}}U_s^{n}-\frac{A_{ox}u^{n}}{k_{ox}+\sigma^{n-1}}V_s^{n}\\
&+\frac{A_{ox}u^{n}\sigma^{n}}{(k_{ox}+\sigma^{n-1})^{2}}V_s^{n-1}+(1-\ControlS^{n})S_{c}U_s^{n}-S_{c}u^{n}\delta_{nj}.
\end{aligned}
\end{cases}
\end{equation}

Note that both sensitivity systems have the same structure as the state scheme \eqref{eq:esquema_estados}; in fact, $U^{n}$ and $V^{n}$ are obtained successively from the same denominators as in \eqref{eq:un_explicito} and \eqref{eq:sigman_explicito}, which are greater than or equal to one, so that both systems are uniquely solvable and explicitly computable. However, solving them for each $j=1,\dots,N$ would require $2N$ additional solutions per iteration of the descent method. This is precisely what the discrete adjoint system, introduced below, allows us to avoid.

\subsubsection*{The discrete adjoint system}

\noindent
\textbf{Initialization:} Compute $z^{N}$ and $w^{N}$ from
\begin{equation}\label{eq:adjunto_terminal}
\begin{split}
&\Bigl(1+h\gamma+h\frac{A_{ox}u^{N}}{k_{ox}+\sigma^{N-1}}\Bigr)z^{N}=\frac{h}{2}k_2^{N}(\sigma^{N}-\sigma_d^{N})+l_2(\sigma^{N}-\sigma_{dT}),\\
&\Bigl(1+h\rho(\sigma^{N-1})u^{N-1}+h\kappa \ControlC^{N}\sigma^{N-1}\Bigr)w^{N}+h\Bigl(\frac{A_{ox}\sigma^{N}}{k_{ox}+\sigma^{N-1}}-(1-\ControlS^{N})S_{c}\Bigr)z^{N}=\frac{h}{2}k_1^{N}u^{N}+l_1u^{N}.
\end{split}
\end{equation}

\noindent
\textbf{Time step $n=N-1,\dots,1$:} Given $z^{n+1}$ and $w^{n+1}$, compute $z^{n}$ and $w^{n}$ from
\begin{equation}\label{eq:adjunto}
\begin{cases}
\begin{aligned}
\dfrac{z^{n}-z^{n+1}}{h}={}&\rho'(\sigma^{n})u^{n}(\alpha-u^{n+1})w^{n+1}-\kappa \ControlC^{n+1}u^{n+1}w^{n+1}-\gamma z^{n}-\frac{A_{ox}u^{n}}{k_{ox}+\sigma^{n-1}}z^{n}\\
&+\frac{A_{ox}u^{n+1}\sigma^{n+1}}{(k_{ox}+\sigma^{n})^{2}}z^{n+1}+k_2^{n}(\sigma^{n}-\sigma_d^{n}),
\end{aligned}\\[6mm]
\begin{aligned}
\dfrac{w^{n}-w^{n+1}}{h}={}&\rho(\sigma^{n})(\alpha-u^{n+1})w^{n+1}-\rho(\sigma^{n-1})u^{n-1}w^{n}-\kappa \ControlC^{n}\sigma^{n-1}w^{n}\\
&-\frac{A_{ox}\sigma^{n}}{k_{ox}+\sigma^{n-1}}z^{n}+(1-\ControlS^{n})S_{c}z^{n}+k_1^{n}u^{n}.
\end{aligned}
\end{cases}
\end{equation}
Here, $w^{n}$ and $z^{n}$ are the discrete adjoint variables associated with the tumor cell density and oxygen concentration, respectively. As in the continuous case, the adjoint system is solved backward in time. Note that it has a unique solution and, like the state scheme, is decoupled, since in this case, at each time step, $z^{n}$ is obtained first and then $w^{n}$.

\subsubsection*{The discrete gradient}

The discrete counterpart of the admissible set \eqref{eq:conjunto_admisible} is
\begin{equation}\label{eq:admisible_discreto}
\mathcal{U}_{ad}^{h}:=\Bigl\{[\ControlC,\ControlS]\in\mathbb{R}^{N}\times\mathbb{R}^{N}:\ 0\leq \ControlC^{j}\leq1,\ 0\leq \ControlS^{j}\leq1,\ j=1,\dots,N,\ \ h\sum_{j=1}^{N}\ControlC^{j}\leq c_{\max}\Bigr\},
\end{equation}
which is a nonempty, convex and compact subset of $\mathbb{R}^{2N}$. It is convenient to write this set as a Cartesian product
\begin{equation}\label{eq:cartesianProduct}
  \mathcal{U}_{ad}^h = \mathcal{K}_\ControlC^h \times \mathcal{K}_\ControlS^h, \quad \mathcal{K}^h := [0,1]^N, \quad \mathcal{K}_\ControlC^h := \big\{v\in \mathcal{K}^h : h \sum_{j=1}^N v^j \leq c_{\text{max}}\big\}, \quad \mathcal{K}_\ControlS^h := \mathcal{K}^h.
\end{equation}
Consequently, defining the discrete reduced functional by
\[
\widehat{J}_h(\ControlC,\ControlS) := J_h\bigl({u(\ControlC,\ControlS),\sigma(\ControlC,\ControlS),\ControlC,\ControlS}\bigr),
\]
we consider the discrete problem
\begin{equation}\label{eq:problema_discreto}
(P_h)\qquad\min_{{[\ControlC,\ControlS]}\in\mathcal{U}_{ad}^{h}}\widehat{J}_h\bigl(\ControlC,\ControlS\bigr).
\end{equation}
Since $\mathcal{U}_{ad}^h$ is a nonempty compact set and $\widehat{J}_h$ is continuous over this set, then by the Weierstrass Theorem, the problem $(P_h)$ admits at least one global solution,. Applying the chain rule in \eqref{eq:funcional_discreto}, and taking into account that $u^{0}$ and $\sigma^{0}$ do not depend on the controls, we obtain
\begin{equation}\label{eq:derivada_cadena_c}
\begin{split}
\frac{\partial \widehat{J}_h}{\partial \ControlC^{j}}={}&h\left[\sum_{n=1}^{N-1}k_1^{n}u^{n}U_c^{n}+\sum_{n=1}^{N-1}k_2^{n}(\sigma^{n}-\sigma_d^{n})V_c^{n}+k_3^{j}\right]\\
&+\frac{h}{2}\Bigl[k_1^{N}u^{N}U_c^{N}+k_2^{N}(\sigma^{N}-\sigma_d^{N})V_c^{N}\Bigr]+l_1u^{N}U_c^{N}+l_2(\sigma^{N}-\sigma_{dT})V_c^{N},
\end{split}
\end{equation}
and, analogously,
\begin{equation}\label{eq:derivada_cadena_s}
\begin{split}
\frac{\partial \widehat{J}_h}{\partial \ControlS^{j}}={}&h\left[\sum_{n=1}^{N-1}k_1^{n}u^{n}U_s^{n}+\sum_{n=1}^{N-1}k_2^{n}(\sigma^{n}-\sigma_d^{n})V_s^{n}+k_4^{j}\right]\\
&+\frac{h}{2}\Bigl[k_1^{N}u^{N}U_s^{N}+k_2^{N}(\sigma^{N}-\sigma_d^{N})V_s^{N}\Bigr]+l_1u^{N}U_s^{N}+l_2(\sigma^{N}-\sigma_{dT})V_s^{N}.
\end{split}
\end{equation}
The following result shows that the adjoint system \eqref{eq:adjunto_terminal}--\eqref{eq:adjunto} eliminates the sensitivities from \eqref{eq:derivada_cadena_c} and \eqref{eq:derivada_cadena_s}, and provides a closed expression for the discrete reduced gradient.

\begin{prop}\label{prop:gradiente_discreto}
Let $[u,\sigma]$ be the solution of the state scheme \eqref{eq:esquema_estados} associated with the discrete controls $[\ControlC,\ControlS]$, and let $[w,z]$ be the solution of the adjoint system \eqref{eq:adjunto_terminal}--\eqref{eq:adjunto}. Then, for every $j=1,\dots,N$,
\begin{equation}\label{eq:gradiente_discreto}
\frac{\partial \widehat{J}_h}{\partial \ControlC^{j}}=h\bigl(k_3^{j}-\kappa\,u^{j}\sigma^{j-1}w^{j}\bigr),\qquad
\frac{\partial \widehat{J}_h}{\partial \ControlS^{j}}=h\bigl(k_4^{j}-S_{c}\,u^{j}z^{j}\bigr).
\end{equation}
Consequently, the gradient of $\widehat{J}_h$ with respect to the discrete $L^2$ inner product defined above is given componentwise by
\begin{equation}\label{eq:derivada_riesz}
(\nabla_h \widehat{J}_h(\ControlC,\ControlS))^j_c = k_3^j-\kappa u^j\sigma^{j-1}w^j, \qquad (\nabla_h \widehat{J}_h(\ControlC,\ControlS))^j_s = k_4^j - S_c u^j z^j.
\end{equation}
\end{prop}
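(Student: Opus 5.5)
The plan is the standard discrete Lagrangian/duality argument. Fix $j$ and write the sensitivity recursion \eqref{eq:sensibilidad_c} in the ``solved'' form obtained by multiplying by $h$ and collecting the implicit terms:
\[
D_u^n U_c^n = U_c^{n-1}+h a^n U_c^{n-1}+h b^n V_c^{n-1}-h\kappa u^n\sigma^{n-1}\delta_{nj},
\]
\[
D_\sigma^n V_c^n = V_c^{n-1}+h e^n V_c^{n-1}+h(1-\ControlS^n)S_cU_c^n-h\tfrac{A_{ox}\sigma^n}{k_{ox}+\sigma^{n-1}}U_c^n.
\]
Here $D_u^n=1+h\rho(\sigma^{n-1})u^{n-1}+h\kappa\ControlC^n\sigma^{n-1}$ and $D_\sigma^n=1+h\gamma+h\frac{A_{ox}u^n}{k_{ox}+\sigma^{n-1}}$ are the denominators of \eqref{eq:un_explicito}--\eqref{eq:sigman_explicito}. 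The explicit coefficients are
\[
a^n=\rho(\sigma^{n-1})(\alpha-u^n),\quad b^n=\rho'(\sigma^{n-1})u^{n-1}(\alpha-u^n)-\kappa\ControlC^nu^n,\quad e^n=\frac{A_{ox}u^n\sigma^n}{(k_{ox}+\sigma^{n-1})^2}.
\]

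Multiply the $U$-equation by $w^n$ and the $V$-equation by $z^n$, and sum over $n=1,\dots,N$. Then perform a discrete summation by parts, i.e.\ shift the index $n-1\to n$ in every term carrying $U_c^{n-1}$ or $V_c^{n-1}$, using $U_c^0=V_c^0=0$. Afterwards, group all terms multiplying $U_c^n$, and separately all terms multiplying $V_c^n$, for each $n$.

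For $1\le n\le N-1$, the coefficient of $U_c^n$ is
\[
D_u^nw^n-w^{n+1}-ha^{n+1}w^{n+1}+h\Big(\tfrac{A_{ox}\sigma^n}{k_{ox}+\sigma^{n-1}}-(1-\ControlS^n)S_c\Big)z^n.
\]
The coefficient of $V_c^n$ is
\[
D_\sigma^nz^n-z^{n+1}-he^{n+1}z^{n+1}-hb^{n+1}w^{n+1}.
\]
Comparing with \eqref{eq:adjunto}, multiplied by $h$, these should equal exactly $hk_1^nu^n$ and $hk_2^n(\sigma^n-\sigma_d^n)$, respectively. This coefficient check is the main obstacle. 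One must carefully match the indices in the adjoint scheme: $\rho'(\sigma^n)u^n(\alpha-u^{n+1})$ and $\kappa\ControlC^{n+1}u^{n+1}$ in the $z$-equation, and $\rho(\sigma^n)(\alpha-u^{n+1})$ in the $w$-equation. I would do this term by term. I also expect to have to check the term $\frac{A_{ox}u^n}{k_{ox}+\sigma^{n-1}}z^n$, which should come from $D_\sigma^n$.

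For $n=N$ there are no shifted terms. The coefficients are then just $D_u^Nw^N+h(\cdots)z^N$ and $D_\sigma^Nz^N$, which by \eqref{eq:adjunto_terminal} equal $(\frac h2k_1^N+l_1)u^N$ and $\frac h2k_2^N(\sigma^N-\sigma_d^N)+l_2(\sigma^N-\sigma_{dT})$.

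Hence the left-hand side of the summed identity equals exactly the sensitivity part of \eqref{eq:derivada_cadena_c}, that is, \eqref{eq:derivada_cadena_c} minus $hk_3^j$. The right-hand side reduces to the source term alone, namely $-h\kappa u^j\sigma^{j-1}w^j$. Adding $hk_3^j$ gives the first formula of \eqref{eq:gradiente_discreto}.

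The same computation applies verbatim to \eqref{eq:sensibilidad_s}. There the source is instead $-hS_cu^n\delta_{nj}$, appearing in the $V$-equation, which yields $-hS_cu^jz^j$ and, after adding $hk_4^j$, the second formula.

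Finally, \eqref{eq:derivada_riesz} follows from the Riesz representation with respect to $(\cdot,\cdot)_h$. Since $(g,v)_h=h\sum_j(g_c^jv_c^j+g_s^jv_s^j)$ must equal $\sum_j\big(\partial_{\ControlC^j}\widehat J_h\,v_c^j+\partial_{\ControlS^j}\widehat J_h\,v_s^j\big)$ for all $v$, we obtain $g^j=h^{-1}\partial\widehat J_h$ componentwise.
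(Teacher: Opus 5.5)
Your proposal is correct and follows essentially the same route as the paper. Both arguments test the sensitivity recursions with $w^n$ and $z^n$, sum by parts using $U_c^0=V_c^0=0$, and identify the coefficients of $U_c^n,V_c^n$ with the adjoint equations and terminal conditions; this isolates the source term $-h\kappa u^j\sigma^{j-1}w^j$ (resp.\ $-hS_cu^jz^j$), and the Riesz identification finishes the proof. Writing the recursions in solved form with $D_u^n,D_\sigma^n$ is only a rearrangement of the paper's multiplication by $hw^n$ and $hz^n$. The coefficients you list coincide exactly with $h$ times the rearranged equations \eqref{eq:adjunto} and with \eqref{eq:adjunto_terminal}, so the check you flagged as the main obstacle goes through.
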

\begin{proof}
Multiplying equation \eqref{eq:sensibilidad_c}$_1$ by $hw^{n}$ and equation \eqref{eq:sensibilidad_c}$_2$ by $hz^{n}$, and adding for $n=1,\dots,N$, we obtain
\begin{equation}\label{eq:suma_partes}
\begin{split}
\sum_{n=1}^{N}w^{n}(U_c^{n}-U_c^{n-1})+\sum_{n=1}^{N}z^{n}(V_c^{n}-V_c^{n-1})
={}&h\sum_{n=1}^{N}\Bigl(\rho'(\sigma^{n-1})u^{n-1}(\alpha-u^{n})V_c^{n-1}w^{n}\\
&+\rho(\sigma^{n-1})(\alpha-u^{n})U_c^{n-1}w^{n}-\rho(\sigma^{n-1})u^{n-1}U_c^{n}w^{n}\\
&-\kappa \ControlC^{n}u^{n}V_c^{n-1}w^{n}-\kappa \ControlC^{n}\sigma^{n-1}U_c^{n}w^{n}-\kappa u^{n}\sigma^{n-1}\delta_{nj}w^{n}\\
&-\gamma V_c^{n}z^{n}-\frac{A_{ox}u^{n}}{k_{ox}+\sigma^{n-1}}V_c^{n}z^{n}-\frac{A_{ox}\sigma^{n}}{k_{ox}+\sigma^{n-1}}U_c^{n}z^{n}\\
&+\frac{A_{ox}u^{n}\sigma^{n}}{(k_{ox}+\sigma^{n-1})^{2}}V_c^{n-1}z^{n}+(1-\ControlS^{n})S_{c}U_c^{n}z^{n}\Bigr).
\end{split}
\end{equation}
Since $U_c^{0}=0$ and $V_c^{0}=0$, a summation by parts gives
\[
\sum_{n=1}^{N}w^{n}(U_c^{n}-U_c^{n-1})=\sum_{n=1}^{N-1}(w^{n}-w^{n+1})U_c^{n}+w^{N}U_c^{N},\qquad
\sum_{n=1}^{N}z^{n}(V_c^{n}-V_c^{n-1})=\sum_{n=1}^{N-1}(z^{n}-z^{n+1})V_c^{n}+z^{N}V_c^{N}.
\]
Moreover, the terms of the right-hand side of \eqref{eq:suma_partes} containing $U_c^{n-1}$ and $V_c^{n-1}$ can be reindexed, using again $U_c^{0}=V_c^{0}=0$, as
\[
\sum_{n=1}^{N}h\rho(\sigma^{n-1})(\alpha-u^{n})U_c^{n-1}w^{n}=\sum_{n=1}^{N-1}h\rho(\sigma^{n})(\alpha-u^{n+1})U_c^{n}w^{n+1},
\]
\[
\sum_{n=1}^{N}h\bigl[\rho'(\sigma^{n-1})u^{n-1}(\alpha-u^{n})-\kappa \ControlC^{n}u^{n}\bigr]V_c^{n-1}w^{n}=\sum_{n=1}^{N-1}h\bigl[\rho'(\sigma^{n})u^{n}(\alpha-u^{n+1})-\kappa \ControlC^{n+1}u^{n+1}\bigr]V_c^{n}w^{n+1},
\]
\[
\sum_{n=1}^{N}h\frac{A_{ox}u^{n}\sigma^{n}}{(k_{ox}+\sigma^{n-1})^{2}}V_c^{n-1}z^{n}=\sum_{n=1}^{N-1}h\frac{A_{ox}u^{n+1}\sigma^{n+1}}{(k_{ox}+\sigma^{n})^{2}}V_c^{n}z^{n+1},
\]
while, by the definition of the Kronecker delta,
\[
\sum_{n=1}^{N}h\kappa u^{n}\sigma^{n-1}\delta_{nj}w^{n}=h\kappa u^{j}\sigma^{j-1}w^{j}.
\]
Collecting now, in \eqref{eq:suma_partes}, the coefficients of $U_c^{n}$ and of $V_c^{n}$ for $n=1,\dots,N-1$, and the coefficients of $U_c^{N}$ and $V_c^{N}$, and substituting the adjoint equations \eqref{eq:adjunto} and the terminal conditions \eqref{eq:adjunto_terminal}, we obtain
\begin{equation}\label{eq:identidad_adjunta_c}
\begin{split}
0={}&h\sum_{n=1}^{N-1}k_1^{n}u^{n}U_c^{n}+h\sum_{n=1}^{N-1}k_2^{n}(\sigma^{n}-\sigma_d^{n})V_c^{n}+\frac{h}{2}k_1^{N}u^{N}U_c^{N}+\frac{h}{2}k_2^{N}(\sigma^{N}-\sigma_d^{N})V_c^{N}\\
&+l_1u^{N}U_c^{N}+l_2(\sigma^{N}-\sigma_{dT})V_c^{N}+h\kappa u^{j}\sigma^{j-1}w^{j}.
\end{split}
\end{equation}
Consequently, all the terms of \eqref{eq:derivada_cadena_c} which contain sensitivities add up to $-h\kappa u^{j}\sigma^{j-1}w^{j}$, and substituting this identity into \eqref{eq:derivada_cadena_c} we get the first equation in \eqref{eq:gradiente_discreto}.\\

For the sensitivity with respect to $\ControlS^{j}$ the argument is identical, the only difference being that the term containing the Kronecker delta now appears in the second equation of \eqref{eq:sensibilidad_s} and satisfies
\[
\sum_{n=1}^{N}hS_{c}u^{n}\delta_{nj}z^{n}=hS_{c}u^{j}z^{j}.
\]
Repeating the previous procedure we obtain
\[
\begin{split}
0={}&h\sum_{n=1}^{N-1}k_1^{n}u^{n}U_s^{n}+h\sum_{n=1}^{N-1}k_2^{n}(\sigma^{n}-\sigma_d^{n})V_s^{n}+\frac{h}{2}k_1^{N}u^{N}U_s^{N}+\frac{h}{2}k_2^{N}(\sigma^{N}-\sigma_d^{N})V_s^{N}\\
&+l_1u^{N}U_s^{N}+l_2(\sigma^{N}-\sigma_{dT})V_s^{N}+hS_{c}u^{j}z^{j},
\end{split}
\]
and substituting into \eqref{eq:derivada_cadena_s} yields the second equation in \eqref{eq:gradiente_discreto}. Finally, by the Riesz representation with respect to $(\cdot, \cdot)_h$, the equations in \eqref{eq:gradiente_discreto} give the expression in \eqref{eq:derivada_riesz} for $\nabla_h \widehat{J}_h$.
\end{proof}

\subsubsection*{The discrete first-order necessary optimality conditions}\label{sec:kkt_discreto}
To obtain the necessary first-order conditions for the discrete problem \eqref{eq:problema_discreto}, we proceed in three parts: first, we derive the discrete variational inequality; next, we formulate it component by component; and finally, we incorporate the integral constraint using a Lagrange multiplier.\\

\subsubsection*{Part 1: discrete variational inequality}

\begin{lemm}[Discrete Variational Inequality]\label{lema:vi_discreta}
Let $[\overline{\ControlC},\overline{\ControlS}]\in\mathcal{U}_{ad}^{h}$ be a local optimal control of $(P_h)$. Then
\begin{equation}\label{eq:vi_discreta}
\left(\nabla_h\widehat{J}_h(\overline{\ControlC},\overline{\ControlS}),\,
[\ControlC,\ControlS]-[\overline{\ControlC},\overline{\ControlS}]\right)_h\ \ge\ 0
\qquad\text{for all }[\ControlC,\ControlS]\in\mathcal{U}_{ad}^{h},
\end{equation}
that is, by defining the two components of $\nabla_h\widehat J_h(\overline{\ControlC},\overline{\ControlS})$ given by \eqref{eq:derivada_riesz} as
\begin{equation}\label{eq:gradiente_gc_gs}
g_\ControlC^{\,j}:=k_3^{j}-\kappa\,\overline{u}^{\,j}\overline{\sigma}^{\,j-1}\overline{w}^{\,j},
\qquad
g_\ControlS^{\,j}:=k_4^{j}-S_{c}\,\overline{u}^{\,j}\overline{z}^{\,j},
\qquad j=1,\dots,N,
\end{equation}
then, the variational inequality is obtained
\begin{equation}\label{eq:vi_discreta2}
h\sum_{j=1}^{N}g_\ControlC^{\,j}\bigl(\ControlC^{j}-\overline{\ControlC}^{\,j}\bigr)
+h\sum_{j=1}^{N}g_\ControlS^{\,j}\bigl(\ControlS^{j}-\overline{\ControlS}^{\,j}\bigr)\ \ge\ 0
\qquad\text{for all }[\ControlC,\ControlS]\in\mathcal{U}_{ad}^{h},
\end{equation}
where $[\overline u,\overline\sigma]$ and $[\overline w,\overline z]$ are the discrete states and adjoint states associated with $[\overline{\ControlC},\overline{\ControlS}]$. Furthermore, \eqref{eq:vi_discreta2} can be split into two independent inequalities
\begin{equation}\label{eq:vi_split}
h\sum_{j=1}^{N}g_\ControlC^{\,j}\bigl(v^{j}-\overline{\ControlC}^{\,j}\bigr)\ge0\quad\forall\,v\in\mathcal{K}_{\ControlC}^{h},
\qquad\text{and}\qquad
h\sum_{j=1}^{N}g_\ControlS^{\,j}\bigl(v^{j}-\overline{\ControlS}^{\,j}\bigr)\ge0\quad\forall\,v\in\mathcal{K}_{\ControlS}^{h}.
\end{equation}
\end{lemm}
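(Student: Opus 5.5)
The argument is the standard first-order condition for minimizing a $C^1$ function over a convex set. The directional derivative is then rewritten through Proposition \ref{prop:gradiente_discreto}. The product structure \eqref{eq:cartesianProduct} of $\mathcal{U}_{ad}^h$ gives the splitting.

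\textbf{Step 1 (differentiability).} As observed after Proposition \ref{prop:esquema_estados}, the discrete states depend on the controls through rational expressions whose denominators are $\geq 1$ on a neighborhood of $[0,1]^{2N}$. Hence $\widehat J_h$ is continuously differentiable on an open neighborhood of $\mathcal{U}_{ad}^h\subset[0,1]^{2N}$. Its partial derivatives are given by \eqref{eq:gradiente_discreto}.

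\textbf{Step 2 (feasible segment).} Fix $[\ControlC,\ControlS]\in\mathcal{U}_{ad}^h$ and set $q_t:=[\overline{\ControlC},\overline{\ControlS}]+t([\ControlC,\ControlS]-[\overline{\ControlC},\overline{\ControlS}])$ for $t\in[0,1]$. By convexity of $\mathcal{U}_{ad}^h$, each $q_t$ is admissible. For $t$ small, $q_t$ also lies in the neighborhood where $[\overline{\ControlC},\overline{\ControlS}]$ is locally optimal, so
\[
\widehat J_h(q_t)-\widehat J_h(\overline{\ControlC},\overline{\ControlS})\geq 0 .
\]
Dividing by $t>0$ and letting $t\to0^+$, the chain rule gives
\[
\sum_{j=1}^N\frac{\partial\widehat J_h}{\partial\ControlC^j}(\ControlC^j-\overline{\ControlC}^j)+\sum_{j=1}^N\frac{\partial\widehat J_h}{\partial\ControlS^j}(\ControlS^j-\overline{\ControlS}^j)\geq0 .
\]
Substituting \eqref{eq:gradiente_discreto} from Proposition \ref{prop:gradiente_discreto}, each partial derivative equals $h$ times the corresponding component in \eqref{eq:derivada_riesz}. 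This yields exactly \eqref{eq:vi_discreta2}, which is \eqref{eq:vi_discreta} by the definition of $(\cdot,\cdot)_h$. The states and adjoints here are those associated with $[\overline{\ControlC},\overline{\ControlS}]$, so the components are $g_\ControlC^j$ and $g_\ControlS^j$ as defined in \eqref{eq:gradiente_gc_gs}.

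\textbf{Step 3 (splitting).} By \eqref{eq:cartesianProduct}, for any $v\in\mathcal{K}_\ControlC^h$ the pair $[v,\overline{\ControlS}]$ belongs to $\mathcal{U}_{ad}^h$. Inserting it into \eqref{eq:vi_discreta2} kills the $\ControlS$-sum and gives the first inequality in \eqref{eq:vi_split}. Symmetrically, $[\overline{\ControlC},v]$ with $v\in\mathcal{K}_\ControlS^h$ gives the second.

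The converse direction follows by summing the two inequalities, although the statement does not require it.

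No step is a genuine obstacle. The only point needing care is Step 1: local optimality is taken relative to $\mathcal{U}_{ad}^h$, so only one-sided derivatives along feasible directions are used. It therefore suffices that $\widehat J_h$ be $C^1$ on a neighborhood of the compact box, which the explicit formulas \eqref{eq:un_explicito}--\eqref{eq:sigman_explicito} guarantee.
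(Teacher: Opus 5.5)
Your proposal is correct and follows essentially the same route as the paper: you take the feasible convex-combination segment $q_\theta$, use local optimality for small $\theta$, divide by $\theta$ and pass to the limit using the $C^1$ regularity of $\widehat J_h$ near $[0,1]^{2N}$, substitute the gradient formula of Proposition \ref{prop:gradiente_discreto}, and split using the Cartesian product structure \eqref{eq:cartesianProduct}. Your Step 3 makes explicit, by testing with $[v,\overline{\ControlS}]$ and $[\overline{\ControlC},v]$, what the paper states in one line, and you are slightly more careful than the paper in requiring $q_\theta$ to lie in the local-optimality neighborhood only for small $\theta$.
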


\begin{proof}
Let $[\ControlC,\ControlS]\in\mathcal{U}_{ad}^{h}$, and let us define
$q_\theta := [\overline{\ControlC}, \overline{\ControlS}] + \theta \bigl([\ControlC, \ControlS] - [\overline{\ControlC}, \overline{\ControlS}]\bigr)$
for $\theta \in (0, 1]$. In fact, note that $q_\theta \in \mathcal{U}_{ad}^h$ and $q_\theta \to [\overline{\ControlC}, \overline{\ControlS}]$ as $\theta \to 0^+$. Furthermore, since $[\overline{\ControlC}, \overline{\ControlS}]$ is a local optimum, we have $\widehat J_h(q_\theta)\ge\widehat J_h(\overline{\ControlC},\overline{\ControlS})$. Since $\widehat J_h$ is of class $C^{1}$ in an open neighborhood of $[0,1]^{2N}$, dividing by $\theta > 0$ and letting $\theta \to 0^{+}$ yields
\begin{equation}\label{eq:teoDeisgV}
0 \le \lim_{\theta \to 0^{+}} \frac{\widehat J_h(q_\theta) - \widehat J_h(\overline{\ControlC}, \overline{\ControlS})}{\theta}
=\bigl(\nabla_h\widehat J_h(\overline{\ControlC},\overline{\ControlS}),[\ControlC,\ControlS]-[\overline{\ControlC},\overline{\ControlS}]\bigr)_h,
\end{equation}
Therefore, \eqref{eq:vi_discreta} is proven. The inequality \eqref{eq:vi_discreta2} follows from \eqref{eq:derivada_riesz} applied to \eqref{eq:teoDeisgV}. Finally, since $\mathcal{U}_{ad}^{h}$ is the Cartesian product \eqref{eq:cartesianProduct}, we have \eqref{eq:vi_split}.
\end{proof}

\subsubsection*{Part 2: component-by-component formulation}

\begin{lemm}\label{lema:proyeccion_discreta}
Let $\mathcal{C}\subset\mathbb{R}^{M}$ be a nonempty, closed, and convex set, and let $\langle\cdot,\cdot\rangle$ be any inner product on $\mathbb{R}^{M}$. For $a\in\mathcal{C}$ and $r\in\mathbb{R}^{M}$, the following statements are equivalent:
\begin {itemize}
\item[i)] $\langle r,\,\xi-a\rangle\ge0$ for all $\xi\in\mathcal{C}$;
\item[ii)] $a=\operatorname{Proj}_{\mathcal{C}}(a-\tau r)$ for all $\tau>0$.
\end{itemize}
\end{lemm}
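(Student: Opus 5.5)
The argument rests on the variational characterization of the orthogonal projection onto a nonempty, closed, convex set in the Hilbert space $(\mathbb{R}^M,\langle\cdot,\cdot\rangle)$. For $v\in\mathbb{R}^M$, a point $p\in\mathcal{C}$ satisfies $p=\operatorname{Proj}_{\mathcal{C}}(v)$ if and only if
\[
\langle v-p,\,\xi-p\rangle\le 0\qquad\text{for all }\xi\in\mathcal{C}.
\]
Existence and uniqueness of the projection come from closedness, convexity and the strict convexity of the norm induced by $\langle\cdot,\cdot\rangle$. Finite dimensionality makes $\mathcal{C}$ boundedly compact, so a minimizer of $\|v-\xi\|$ over $\mathcal{C}$ exists.

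I would prove this characterization first, as a short preliminary step. Suppose $p$ is the minimizer and $\xi\in\mathcal{C}$. Then $p+\theta(\xi-p)\in\mathcal{C}$ for $\theta\in(0,1]$, so
\[
0\le \|v-p-\theta(\xi-p)\|^2-\|v-p\|^2=-2\theta\langle v-p,\xi-p\rangle+\theta^2\|\xi-p\|^2 .
\]
Dividing by $\theta$ and letting $\theta\to0^+$ gives the inequality. Conversely, suppose the inequality holds for all $\xi\in\mathcal{C}$. Then
\[
\|v-\xi\|^2=\|v-p\|^2-2\langle v-p,\xi-p\rangle+\|\xi-p\|^2\ge\|v-p\|^2,
\]
so $p$ is the minimizer.

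With this in hand, both implications reduce to the same identity. Fix $\tau>0$, take $v=a-\tau r$ and $p=a\in\mathcal{C}$. Then $v-p=-\tau r$, and the characterization says $a=\operatorname{Proj}_{\mathcal{C}}(a-\tau r)$ if and only if $-\tau\langle r,\xi-a\rangle\le0$ for all $\xi\in\mathcal{C}$. Since $\tau>0$, this is exactly $\langle r,\xi-a\rangle\ge0$ for all $\xi\in\mathcal{C}$.

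For (i)$\Rightarrow$(ii), statement (i) gives the projection condition for every $\tau>0$. For (ii)$\Rightarrow$(i), it is enough to use (ii) for a single $\tau>0$, say $\tau=1$. So (ii) could even be weakened to "for some $\tau>0$".

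There is no real obstacle here. The one point needing care is that the projection is taken with respect to the arbitrary inner product $\langle\cdot,\cdot\rangle$, not the Euclidean one. This matters because the lemma will be applied with $\langle\cdot,\cdot\rangle=(\cdot,\cdot)_h$ and $\mathcal{C}=\mathcal{K}^h_{\ControlC}$ or $\mathcal{K}^h_{\ControlS}$, together with \eqref{eq:vi_split}. The characterization above uses only the inner-product structure, so the proof holds verbatim for any inner product. Here $(\cdot,\cdot)_h$ is just the Euclidean inner product scaled by $h$, so the projection coincides with the Euclidean one anyway.
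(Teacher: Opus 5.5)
Your proof is correct and follows the paper's argument: apply the variational characterization of $\operatorname{Proj}_{\mathcal{C}}$ with $v=a-\tau r$ and $p=a$, then use $\tau>0$ to pass between $-\tau\langle r,\xi-a\rangle\le 0$ and (i). The only difference is that you also prove this characterization for an arbitrary inner product, which the paper just recalls, so your version is self-contained.
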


\begin{proof}
Recall that, for $y\in\mathbb{R}^{M}$, the projection $p=\operatorname{Proj}_{\mathcal{C}}(y)$ is the unique element of $\mathcal{C}$ such that $\langle y-p,\ \xi-p\rangle\le0$ for all $\xi\in\mathcal{C}$. Applying this characterization with $y=a-\tau r$ and $p=a\in\mathcal{C}$, we obtain
$\langle a-\tau r-a,\ \xi-a\rangle=-\tau\langle r,\xi-a\rangle\le0$ for all $\xi\in\mathcal{C}$,
which, for $\tau>0$, is equivalent to (i). Since (i) does not involve $\tau$, the two statements are equivalent.
\end{proof}

\begin{remark}\label{obs:caja_discreta}
When $\mathcal{C}=[0,1]^{M}$ and $\langle\cdot,\cdot\rangle$ is the discrete $L^{2}$ inner product, $\langle x,y\rangle=h\sum_{j}x^{j}y^{j}$, condition (i) of Lemma \ref{lema:proyeccion_discreta} can be separated component by component. In fact, setting $\xi$ equal to $a$ except at the $j$-th component yields
\[
r^{j}(\xi-a^{j})\ge0\quad\forall\,\xi\in[0,1],\qquad j=1,\dots,M,
\]
and, therefore, $a^{j}=\operatorname{Proj}_{[0,1]}(a^{j}-\tau r^{j})$ for all $\tau>0$, where $\operatorname{Proj}_{[0,1]}$ is the pointwise projection defined in Section \ref{sec:notacion}. Equivalently,
\begin{equation}\label{eq:signos_discretos}
a^j = \begin{cases}
    0, &\text{if }r^j > 0, \\
    1, &\text{if }r^j < 0, \\
    \text{any value in }[0,1], &\text{if }r^j = 0.
\end{cases}
\end{equation}
\end{remark}

\subsubsection*{Part 3: Existence of the multiplier and conditions for optimality}

\begin{theo}[Discrete first-order necessary optimality conditions]\label{teo:kkt_discreto}
Let $0 < c_{\max} < T$, and let $[\overline{\ControlC}, \overline{\ControlS}] \in \mathcal{U}_{ad}^{h}$ be a local optimal control of $(P_h)$. Let $[\overline u, \overline\sigma]$ be the associated discrete states, given by \eqref{eq:esquema_estados}, let $[\overline w,\overline z]$ be the associated discrete adjoint states, given by \eqref{eq:adjunto_terminal}--\eqref{eq:adjunto}, and let $g_\ControlC, g_\ControlS \in \mathbb{R}^{N}$ be defined by \eqref{eq:gradiente_gc_gs}. Then there exists a scalar $\mu\in\mathbb{R}$ with $\mu\ge0$ such that:
\begin{itemize}
\item[a)] $\displaystyle \mu\Bigl(h\sum_{j=1}^{N}\overline{\ControlC}^{\,j}-c_{\max}\Bigr)=0$;
\item[b)] for all $j=1,\dots,N$,
      \[
      \bigl(g_\ControlC^{\,j}+\mu\bigr)\bigl(\xi-\overline{\ControlC}^{\,j}\bigr)\ge0\quad\forall\,\xi\in[0,1],
      \]
      equivalently
      \[
      \overline{\ControlC}^{\,j}=\operatorname{Proj}_{[0,1]}\Bigl(\overline{\ControlC}^{\,j}-\tau\bigl(g_\ControlC^{\,j}+\mu\bigr)\Bigr)\ \ \forall\,\tau>0;
      \]
\item[c)] for all $j=1,\dots,N$,
      \[
      g_\ControlS^{\,j}\bigl(\xi-\overline{\ControlS}^{\,j}\bigr)\ge0\quad\forall\,\xi\in[0,1],
      \]
      equivalently
      \[
      \overline{\ControlS}^{\,j}=\operatorname{Proj}_{[0,1]}\bigl(\overline{\ControlS}^{\,j}-\tau g_\ControlS^{\,j}\bigr)\ \ \forall\,\tau>0.
      \]
\end {itemize}
Explicitly, in terms of the state and adjoint variables, for all $j=1,\dots,N$,
\begin{equation}\label{eq:bangbang_discreto}
\overline{\ControlC}^{\,j}=
\begin{cases}
0, & \text{if } \kappa\,\overline u^{\,j}\overline\sigma^{\,j-1}\overline w^{\,j}<k_3^{j}+\mu,\\
1, & \text{if } \kappa\,\overline u^{\,j}\overline\sigma^{\,j-1}\overline w^{\,j}>k_3^{j}+\mu,\\
\text{any value in }[0,1], & \text{if } \kappa\,\overline u^{\,j}\overline\sigma^{\,j-1}\overline w^{\,j}=k_3^{j}+\mu,
\end{cases}
\end{equation}
\begin{equation}\label{eq:bangbang_discreto_s}
\overline{\ControlS}^{\,j}=
\begin{cases}
0, & \text{if } S_{c}\,\overline u^{\,j}\overline z^{\,j}<k_4^{j},\\
1, & \text{if } S_{c}\,\overline u^{\,j}\overline z^{\,j}>k_4^{j},\\
\text{any value in }[0,1], & \text{if } S_{c}\,\overline u^{\,j}\overline z^{\,j}=k_4^{j}.
\end{cases}
\end{equation}
\end{theo}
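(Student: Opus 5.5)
The plan is to start from Lemma \ref{lema:vi_discreta}. It already gives two decoupled inequalities \eqref{eq:vi_split}. The one for $\ControlS$ is over the pure box $\mathcal{K}_\ControlS^h=[0,1]^N$, so item c) follows directly from Remark \ref{obs:caja_discreta} and Lemma \ref{lema:proyeccion_discreta}, with $r=g_\ControlS$ and $a=\overline{\ControlS}$. Case \eqref{eq:signos_discretos}, rewritten using $g_\ControlS^j=k_4^j-S_c\overline u^j\overline z^j$, then yields \eqref{eq:bangbang_discreto_s}. All the work is in the $\ControlC$-component.

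The first inequality in \eqref{eq:vi_split} says that $\overline{\ControlC}$ minimizes the linear function $v\mapsto h\sum_j g_\ControlC^j v^j$ over the polytope $\mathcal{K}_\ControlC^h$. This is a finite-dimensional linear program whose constraints are all affine, so the KKT theorem for linear constraints applies without any further qualification (Farkas' lemma). I would still make the multiplier explicit, which also handles the equivalence cleanly.

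\textbf{Case 1:} $h\sum_j\overline{\ControlC}^j<c_{\max}$. Set $\mu=0$, so a) is trivial. For fixed $j$ and $\xi\in[0,1]$, take $v=\overline{\ControlC}+\theta(\xi-\overline{\ControlC}^j)e_j$ with $\theta\in(0,1]$ small. This $v$ stays in the box by convexity, and it satisfies the integral constraint for small $\theta$ because of the strict slack. Dividing by $\theta$ gives $g_\ControlC^j(\xi-\overline{\ControlC}^j)\ge0$.

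\textbf{Case 2:} the integral constraint is active. Define $\mu:=\max\{0,\ \max_{j:\overline{\ControlC}^j>0}(-g_\ControlC^j)\}$, with the inner maximum interpreted as $0$ if the index set is empty. Activity and $c_{\max}>0$ guarantee some $\overline{\ControlC}^j>0$ anyway. I then verify b) componentwise, i.e. the sign pattern \eqref{eq:signos_discretos} for $r=g_\ControlC+\mu$.

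(i) If $\overline{\ControlC}^j>0$, then $g_\ControlC^j+\mu\ge0$ by the choice of $\mu$. Equality is needed unless $\overline{\ControlC}^j=1$. Suppose $0<\overline{\ControlC}^j<1$ and $g_\ControlC^j+\mu>0$. Move mass from index $j$ to an index $i$ attaining $\mu$, i.e. $\overline{\ControlC}^i>0$ with $-g_\ControlC^i=\mu$. Since $g_\ControlC^j>-\mu=g_\ControlC^i$, this exchange pairs $+\varepsilon$ at $i$ with $-\varepsilon$ at $j$; it keeps the sum, but it requires $\overline{\ControlC}^i<1$. If instead we try to increase $j$ and decrease $i$, the change has sign $\varepsilon(g^j-g^i)>0$, which does not contradict. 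The correct exchange is to decrease $j$ and increase some $k$ with $\overline{\ControlC}^k<1$ and $g^k<g^j$.

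(ii) If $\overline{\ControlC}^j=0$, I need $g_\ControlC^j+\mu\ge0$.

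Both (i) and (ii) are pairwise exchange arguments. Any feasible mass-preserving exchange $v=\overline{\ControlC}+\varepsilon(e_k-e_j)$ gives $g^k\ge g^j$ whenever $\overline{\ControlC}^j>0$ and $\overline{\ControlC}^k<1$. Dropping mass at $j$ with $\overline{\ControlC}^j>0$ gives $g^j\le0$.

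Because of these subtleties, the cleanest route is to pick $\mu$ as a threshold separating $\{g^j:\overline{\ControlC}^j<1\}$ from $\{g^j:\overline{\ControlC}^j>0\}$. The exchange inequalities say $\max\{-g^j:\overline{\ControlC}^j<1\}\le\min\{-g^j:\overline{\ControlC}^j>0\}$. The drop inequality says $\min\{-g^j:\overline{\ControlC}^j>0\}\ge0$. So I choose $\mu:=\max\bigl(0,\max\{-g^j:\overline{\ControlC}^j<1\}\bigr)$; this satisfies $0\le\mu\le -g^j$ whenever $\overline{\ControlC}^j>0$, and $\mu\ge-g^j$ whenever $\overline{\ControlC}^j<1$. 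Those two inequalities are exactly \eqref{eq:signos_discretos} for $r=g_\ControlC+\mu$, and Remark \ref{obs:caja_discreta} turns them into the projection form of b).

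The key points to check are: that $\mu\ge0$; that a) holds (in Case 2 the constraint is active, so a) is automatic); and the degenerate case where every $\overline{\ControlC}^j=1$. That case is impossible, since it would give $h\sum_j\overline{\ControlC}^j=T>c_{\max}$. This is where the hypothesis $c_{\max}<T$ enters.

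Finally, substituting $g_\ControlC^j=k_3^j-\kappa\overline u^j\overline\sigma^{j-1}\overline w^j$ into the sign rule yields \eqref{eq:bangbang_discreto}. The main obstacle is getting the threshold choice of $\mu$ and the exchange arguments right; everything else is routine.
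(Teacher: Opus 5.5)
Your argument is correct, but it reaches the multiplier by a different route than the paper. The paper does not construct $\mu$. After Lemma \ref{lema:vi_discreta} it applies Luenberger's Lagrange duality theorem (Theorem \ref{teo:kkt}) on $\Omega=[0,1]^N$, with the linear functional $f(v)=h\sum_j g_\ControlC^{\,j}v^j$, the affine constraint $G(v)=h\sum_j v^j-c_{\max}$, and $x_1=0$ as a Slater point; this is where $c_{\max}>0$ is used. The theorem yields $\mu\ge0$ with $\mu G(\overline{\ControlC})=0$ and $\overline{\ControlC}$ minimizing $f+\mu G$ over the box. Lemma \ref{lema:proyeccion_discreta} and Remark \ref{obs:caja_discreta} then decouple this componentwise.

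You instead split into the inactive and active cases. In the active case you extract two facts from the variational inequality. The first is the mass-preserving exchange inequality $g_\ControlC^{\,k}\ge g_\ControlC^{\,j}$ whenever $\overline{\ControlC}^{\,j}>0$ and $\overline{\ControlC}^{\,k}<1$. The second is the drop inequality $g_\ControlC^{\,j}\le0$ whenever $\overline{\ControlC}^{\,j}>0$. You then choose $\mu$ as a threshold between $\{-g_\ControlC^{\,k}:\overline{\ControlC}^{\,k}<1\}$ and $\{-g_\ControlC^{\,j}:\overline{\ControlC}^{\,j}>0\}$. Your final choice $\mu=\max\bigl(0,\max\{-g_\ControlC^{\,k}:\overline{\ControlC}^{\,k}<1\}\bigr)$ gives exactly the sign pattern \eqref{eq:signos_discretos} for $r=g_\ControlC+\mu$, and complementary slackness is automatic.

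Your route is elementary and self-contained, and it produces $\mu$ explicitly. In fact every value in the interval between your two bounds works, so you describe the whole set of multipliers. It needs no constraint qualification, because it only uses linearity of the constraint. It would go through even for $c_{\max}=0$. The hypothesis $c_{\max}<T$ is not really needed either: an empty set $\{\overline{\ControlC}^{\,k}<1\}$ just gives $\mu=0$, which the drop inequality covers.

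The paper's route is shorter. It also does not rely on the equal-coefficient structure that makes your exchanges sum-preserving, so it extends to general convex constraints and mirrors the infinite-dimensional argument.

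When you write it up, delete the first candidate $\mu=\max\{0,\max_{j:\overline{\ControlC}^{\,j}>0}(-g_\ControlC^{\,j})\}$ and the abandoned item (i). That choice works only if $-g_\ControlC^{\,j}$ is constant on $\{j:\overline{\ControlC}^{\,j}>0\}$, which fails for instance when some components sit at $1$ with larger $-g_\ControlC^{\,j}$ than the interior ones. Only the threshold choice should remain.
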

\begin{proof}
\textit{Step 1 (the multiplier).} By the Lemma \ref{lema:vi_discreta}, we have the two decoupled inequalities \eqref{eq:vi_split}. We apply the Kuhn–Tucker Theorem stated below (Theorem \ref{teo:kkt}) with
\[
X:=\mathbb{R}^{N},\qquad \Omega:=\mathcal{K}^{h}=[0,1]^{N},\qquad
f(v):=h\sum_{j=1}^{N}g_\ControlC^{\,j}v^{j},\qquad
G(v):=h\sum_{j=1}^{N}v^{j}-c_{\max}.
\]
Indeed, $\mathbb{R}^{N}$ is a vector space; $\mathcal{K}^{h}$ is convex; $f$ is linear and $G$ is affine on $\mathcal{K}^{h}$, so both are convex functionals with real values on $\Omega$; the point $x_1 := 0 \in \mathcal{K}^{h}$ satisfies $G(x_1) = -c_{\max} < 0$, because $c_{\max} > 0$; and $\mu_0=\inf\{f(v):v\in\mathcal{K}^{h},\ G(v)\le0\}=\inf_{v\in\mathcal{K}_\ControlC^{h}}f(v)$ is finite, since $f$ is continuous and $\mathcal{K}_\ControlC^{h}$ is nonempty and compact. Furthermore, the first inequality in \eqref{eq:vi_split} (the variational inequality) states precisely that
\[
f(\overline{\ControlC})\le f(v)\qquad\text{for all }v\in\mathcal{K}_\ControlC^{h}
=\{v\in\mathcal{K}^{h}:\ G(v)\le0\},
\]
hence the infimum $\mu_0$ is attained at $x_0=\overline{\ControlC}\in\mathcal{K}^{h}$, which satisfies $G(x_0)\le0$. Therefore, there exists a scalar $\mu\ge0$ such that $\mu\,G(\overline{\ControlC})=0$ and such that $\overline{\ControlC}$ is a minimizer of $f+\mu G$ on $\mathcal{K}^{h}$, that is,
\begin{equation}\label{eq:lagrangiano_discreto}
h\sum_{j=1}^{N}\bigl(g_\ControlC^{\,j}+\mu\bigr)\bigl(v^{j}-\overline{\ControlC}^{\,j}\bigr)\ \ge\ 0
\qquad\text{for all }v\in\mathcal{K}^{h}=[0,1]^{N}.
\end{equation}

\textit{Step 2 (componentwise formulation).} The inequality \eqref{eq:lagrangiano_discreto} is condition (i) of the Lemma \ref{lema:proyeccion_discreta} with $\mathcal{C}=[0,1]^{N}$, $a=\overline{\ControlC}$ and $r=g_\ControlC+\mu\mathbf{1}$. By Remark \ref{obs:caja_discreta}, it is decoupled component by component and is equivalent to b) and to \eqref{eq:bangbang_discreto}, since $g_\ControlC^{\,j}+\mu>0$ is equivalent to $k_3^{j}+\mu>\kappa\overline u^{\,j}\overline\sigma^{\,j-1}\overline w^{\,j}$. Applying the same argument to the second inequality in \eqref{eq:vi_split} yields c) and \eqref{eq:bangbang_discreto_s}.\\
\end{proof}

\begin{remark}[Structure of Discrete Optimal Controls]\label{obs:bangbang_discreto}
The Formulas \eqref{eq:bangbang_discreto}--\eqref{eq:bangbang_discreto_s} show that the optimal controls are of bang--bang type, except possibly at the nodes where the switching function vanishes. This structure is induced by the terms $k_3\ControlC$ and $k_4\ControlS$, which are linear in the controls.
\end{remark}

\begin{theo}[{\cite[Chapter 8, Theorem 1]{luenberger}}]\label{teo:kkt}
Let $X$ be a linear vector space and let $\Omega$ be a convex
subset of $X$. Let $f$ and $G$ be real-valued convex functionals on
$\Omega$. Assume that there exists a point $x_{1}\in\Omega$ such that
$G(x_{1})<0$. Let
\begin{equation}\label{eq:kkt-2}
\mu_{0}
=
\inf\bigl\{
    f(x):x\in\Omega,\;G(x)\leq 0
\bigr\},
\end{equation}
and assume that $\mu_{0}$ is finite. Then there exists a scalar
$\lambda_{0}\geq 0$ such that
\begin{equation}\label{3}
\mu_{0}
=
\inf_{x\in\Omega}
\left\{
    f(x)+\lambda_{0}G(x)
\right\}.
\end{equation}
Furthermore, if the infimum in \eqref{eq:kkt-2} is attained by some
$x_{0}\in\Omega$ satisfying $G(x_{0})\leq 0$, then the infimum in
\eqref{3} is also attained by $x_{0}$, and
\[
\lambda_{0}G(x_{0})=0.
\]
\end{theo}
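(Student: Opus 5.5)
The statement to prove is the Kuhn--Tucker (Lagrange duality) theorem of Theorem \ref{teo:kkt}. I would follow the classical separating-hyperplane argument in $\mathbb{R}^2$, which uses only convexity and needs no topology on $X$.

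\textbf{Step 1: the convex set.} Define
\[
\mathcal{A}:=\bigl\{(r,z)\in\mathbb{R}^{2}:\ \exists\,x\in\Omega \text{ with } r\ge G(x),\ z\ge f(x)\bigr\}.
\]
Convexity of $\Omega$, $f$ and $G$ makes $\mathcal{A}$ convex: given two points of $\mathcal{A}$ with witnesses $x,x'$, the witness $\theta x+(1-\theta)x'$ serves for their convex combination. Also define the open convex set
\[
\mathcal{B}:=\{(r,z):\ r<0,\ z<\mu_0\}.
\]
Since $\mu_0$ is finite, $\mathcal{B}$ is nonempty. If some $(r,z)\in\mathcal{A}\cap\mathcal{B}$ existed, its witness $x$ would satisfy $G(x)\le r<0$ and $f(x)\le z<\mu_0$. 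This contradicts the definition of $\mu_0$, so $\mathcal{A}\cap\mathcal{B}=\emptyset$.

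\textbf{Step 2: separation.} Both sets are convex in $\mathbb{R}^2$ and $\mathcal{B}$ has nonempty interior. The finite-dimensional separation theorem gives $(a,b)\neq(0,0)$ and $c\in\mathbb{R}$ with
\[
ar+bz\ge c \ \text{ on } \mathcal{A}, \qquad ar+bz\le c \ \text{ on } \mathcal{B}.
\]
The set $\mathcal{A}$ is upward closed in both coordinates, so $a\ge0$ and $b\ge0$. Letting $(r,z)\to(0,\mu_0)$ through $\mathcal{B}$ gives $b\mu_0\le c$.

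The key point, and the main obstacle, is showing $b>0$. Suppose $b=0$. Then $a>0$, and $aG(x)\ge c\ge 0$ for all $x\in\Omega$. Applying this at the Slater point $x_1$ gives $G(x_1)\ge0$, a contradiction. So we may set $\lambda_0:=a/b\ge0$, and then
\[
f(x)+\lambda_0G(x)\ \ge\ c/b\ \ge\ \mu_0 \qquad \text{for all } x\in\Omega.
\]

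\textbf{Step 3: equality of infima.} For any $x\in\Omega$ with $G(x)\le0$ we have $f(x)+\lambda_0G(x)\le f(x)$. Taking the infimum over such $x$ shows $\inf_\Omega(f+\lambda_0G)\le\mu_0$. Combined with Step 2 this gives equality.

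\textbf{Step 4: complementary slackness.} Suppose the infimum $\mu_0$ is attained at $x_0$ with $G(x_0)\le0$. Then
\[
\mu_0\le f(x_0)+\lambda_0G(x_0)\le f(x_0)=\mu_0,
\]
so $x_0$ attains the infimum in \eqref{3}, and $\lambda_0G(x_0)=0$.
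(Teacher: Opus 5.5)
Your proof is correct, and it follows the cited route. The paper gives no proof of its own and quotes the result from Luenberger \cite{luenberger}, whose argument is exactly this separating-hyperplane construction. Luenberger works in $\mathbb{R}\times Z$ with a vector-valued constraint and a positive cone with interior points; you specialize it to $\mathbb{R}^2$, including the use of the Slater point $x_1$ to exclude $b=0$. The only cosmetic difference is that Luenberger fixes $c=b\mu_0$ by noting that $(0,\mu_0)$ lies in the closure of both $\mathcal{A}$ and $\mathcal{B}$, whereas you need, and correctly derive, only the one-sided bound $b\mu_0\le c$.
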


\subsection{The projected gradient algorithm with Armijo line search}\label{sec:algoritmo_ejemplo}

We now particularize the classical method of Section \ref{sec:algoritmo_clasico} to the discrete problem. In this case, we use the projected gradient method with line search and select the step size by backtracking. This is the algorithm used in all the numerical experiments in this section, and it also produces the labels for the direct strategy.\\

Let $\eta,\nu\in(0,1)$, let $\omega_0 > 0$ be the initial step size, let $\texttt{tol}>0$ be a tolerance, let $\mathcal{N}\in\mathbb{N}$ be the number of consecutive iterations used to verify the stabilization of the functional, and let \texttt{maxIter} and \texttt{maxArmijo} be the maximum number of iterations of the algorithm and Armijo's attempts, respectively; their values are collected in Table \ref{tab:ParametrosArmijo}. Given the data \eqref{eq:identificacion_lambda} and an initial pair of controls $[\ControlC_{0},\ControlS_{0}]\in\mathcal{U}_{ad}^{h}$, the sequence $\{[\ControlC_{k},\ControlS_{k}]\}_{k\geq0}$ is constructed as follows.\\

\noindent
\textbf{Step 1. (State and adjoint state)} Given the current controls $[\ControlC_{k},\ControlS_{k}]\in\mathcal{U}_{ad}^{h}$, compute the discrete states $[u_{k},\sigma_{k}]$ by solving the scheme \eqref{eq:esquema_estados} forward in time, and then the discrete adjoint states $[w_{k},z_{k}]$ by solving \eqref{eq:adjunto_terminal}--\eqref{eq:adjunto} backward in time.\\

\noindent
\textbf{Step 2. (Cost functional)} Compute $J_k:=\widehat{J}_{h}(\ControlC_{k},\ControlS_{k})=J_h(u_{k},\sigma_{k},\ControlC_{k},\ControlS_{k})$ by means of \eqref{eq:funcional_discreto}.\\

\noindent
\textbf{Step 3. (Stopping criterion)} If $k\geq\mathcal{N}$, define $\Delta_{k-j}:=|J_{k-j}-J_{k-j-1}|$, $j=0,\dots,\mathcal{N}-1$, and stop whenever
\[
\max_{0\,\leq \, j\,\leq\,\mathcal{N}-1}\Delta_{k-j}<\texttt{tol},
\]
that is, whenever the value of the cost functional has remained stable during $\mathcal{N}$ consecutive iterations. Stop also if $k\geq\texttt{maxIter}$.\\

\noindent
\textbf{Step 4. (Reduced gradient)} Compute the discrete $L^2$ gradient $g_{k}:=\nabla_h\widehat{J}_{h}(\ControlC_{k},\ControlS_{k})$ by means of Proposition \ref{prop:gradiente_discreto}, that is, componentwise,
\[
(g_{k})_c^{\,j}=k_3^{j}-\kappa\,u_{k}^{j}\sigma_{k}^{j-1}w_{k}^{j},\qquad
(g_{k})_s^{\,j}=k_4^{j}-S_{c}\,u_{k}^{j}z_{k}^{j},\qquad j=1,\dots,N,
\]
so that no sensitivity system has to be solved.\\

\noindent
\textbf{Step 5. (Projected descent with Armijo line search)} For $j=0,1,\dots,\texttt{maxArmijo}$, define the projected trial point by
\[
q_{k,j}:=\operatorname{Proj}_{\mathcal{U}_{ad}^{h}}\bigl([\ControlC_{k},\ControlS_{k}]-\omega_0 \nu^{j}g_{k}\bigr).
\]
Then set
\[
j_k:=\min\Bigl\{j=0,1,\dots,\texttt{maxArmijo}:\ 
\widehat{J}_{h}(q_{k,j})-J_k\leq-\eta\omega_0 \nu^j \|g_k\|^2_h\,
\Bigr\}.
\]
Set $\omega_k:=\omega_0 \nu^{\,j_k}$ (see \cite{Polak}) and set
\[
[\ControlC_{k+1},\ControlS_{k+1}]
=\operatorname{Proj}_{\mathcal{U}_{ad}^{h}}\bigl([\ControlC_k,\ControlS_k]-\omega_k g_k\bigr)
=q_{k,j_k}.
\]
If no $j\in\{0,1,\dots,\texttt{maxArmijo}\}$ satisfies the above inequality, we keep $[\ControlC_{k+1},\ControlS_{k+1}]=[\ControlC_{k},\ControlS_{k}]$; in that case the value of the cost functional does not change any more and the test of Step 3 is activated after $\mathcal{N}$ iterations.\\

\noindent
\textbf{Step 6.} Set $k\leftarrow k+1$ and return to Step 1.\\

Finally, the control returned by the algorithm is submitted to the validation test \eqref{eq:filtro_general}, which is used here with $M=3$ test directions, namely
\[
v_1=[\mathbf{1},\mathbf{1}],\qquad v_2=-[\mathbf{1},\mathbf{1}],\qquad v_3=-\frac{g^{*}}{\Vert g^{*}\Vert_{\infty}},
\]
where $\mathbf{1}=(1,\dots,1)\in\mathbb{R}^{N}$ and $g^{*}$ denotes the reduced discrete $L^2$ gradient of Step 4 evaluated on the returned controls $[\ControlC^{*},\ControlS^{*}]$; the third direction is omitted when $g^{*}=0$ (in this case the algorithm must stop). Each perturbed control is projected back onto $\mathcal{U}_{ad}^{h}$, so that all three trials are admissible. The two constant directions verify that the returned controls resist a global increase and a global decrease in both therapies, while the third probes the first-order optimality condition and is the one that detects the premature stops of Step 3, in which the Armijo line search has exhausted its attempts without producing a sufficient decrease in the functional. In the experiments shown below, we take $\delta=10^{-2}$ and $\texttt{tol}_{\mathrm{op}}=10^{-5}$.

\subsection{The learned operators}\label{sec:operadores}

The \textit{indirect strategy} relies on two independent DeepONets. The first one approximates the state operator
\begin{equation}\label{eq:Ge}
\mathcal{G}_{e}:[\ControlC,\ControlS,u_0,\sigma_0]\longmapsto[u,\sigma],
\end{equation}
whereas the second one is designed to approximate the operator associated with the adjoint variables,
\begin{equation}\label{eq:Ga}
\mathcal{G}_{a}:[\ControlC,\ControlS,u_0,\sigma_0,l_1,l_2,k_1,k_2]\longmapsto[w,z].
\end{equation}
The parameters of the model, as well as the desired oxygen $\sigma_d$ and $\sigma_{dT}$ remain fixed. In both cases, the controls $\ControlC$ and $\ControlS$ are represented by their values at $m$ \textit{equispaced} sensors in $[0,T]$. Unlike the usual DeepONet, whose output is a single real value, each one of these networks approximates an operator with two components; that is, following Remark \ref{rem:multisalida}, the branch net produces $2\ell$ coefficients, rearranged as two vectors of latent dimension $\ell$, one per component, whereas the trunk net produces a single vector of $\ell$ components for the queried time, shared by both components. Each component is obtained by its own inner product between the corresponding vector of the branch net and the vector of the trunk net, plus a trainable bias; in this way, a single evaluation of the state network returns $[u(t),\sigma(t)]$ at the queried time $t\in[0,T]$, and a single evaluation of the adjoint network returns $[w(t),z(t)]$ at that same time $t\in[0,T]$. Thus, the input of the state network has dimension $2m+2$ (two controls together with two scalar input values), whereas the input of the adjoint network has dimension $2m+6$ (two controls together with six scalar input values).\\

Figure \ref{fig:esquema_deeponet_edo} shows the role of the networks inside the optimal control algorithm of Section \ref{sec:algoritmo_ejemplo}, in which the forward and backward resolutions of Step 1 are replaced by evaluations of the trained DeepONets.

\begin{figure}[H]
\centering
\resizebox{0.8\linewidth}{!}{%
\begin{tikzpicture}[x=1cm,y=1cm,line cap=round,line join=round]
  \definecolor{blueacc}{RGB}{20,70,255}
  \tikzset{
    flow/.style={blueacc,-{Latex[length=3mm,width=2mm]},line width=0.9pt},
    loopflow/.style={blueacc,dashed,-{Latex[length=3mm,width=2mm]},line width=0.9pt,
      rounded corners=8pt},
    netline/.style={draw=black,line width=0.4pt},
    neuronB/.style={circle,draw=black,line width=0.55pt,fill=white,
      minimum size=0.21cm,inner sep=0pt},
    neuronT/.style={circle,draw=black,line width=0.55pt,fill=white,
      minimum size=0.21cm,inner sep=0pt},
    panel/.style={rounded corners=4pt,draw=black!60,line width=0.55pt},
    inlab/.style={rounded corners=2pt,draw=black!60,line width=0.55pt,fill=white,
      align=center,font=\small,inner sep=4pt},
    stepbox/.style={rounded corners=2pt,draw=black!60,line width=0.55pt,fill=white,
      align=center,font=\small,inner sep=5pt,minimum height=0.95cm},
    result/.style={rounded corners=2pt,draw=blueacc,line width=0.8pt,fill=white,
      align=center,font=\small,inner sep=5pt,minimum height=0.95cm},
    odot/.style={circle,draw=blueacc,line width=0.9pt,fill=white,
      minimum size=0.55cm,inner sep=0pt,font=\small},
    nettag/.style={font=\small\itshape},
    notetag/.style={font=\tiny,text=black!60},
    ptitle/.style={font=\small\bfseries}
  }

  % Miniature neural network: layers 3-4-3
  \newcommand{\rednet}[4]{% #1 prefix, #2 x, #3 y, #4 neuron style
    \foreach \i/\dy in {1/-0.36,2/0,3/0.36}{
      \coordinate (#1A\i) at (#2,#3+\dy);
      \coordinate (#1C\i) at (#2+1.5,#3+\dy);}
    \foreach \i/\dy in {1/-0.54,2/-0.18,3/0.18,4/0.54}{
      \coordinate (#1B\i) at (#2+0.75,#3+\dy);}
    \foreach \i in {1,2,3}{\foreach \j in {1,...,4}{
      \draw[netline] (#1A\i)--(#1B\j);
      \draw[netline] (#1C\i)--(#1B\j);}}
    \foreach \i in {1,2,3}{\node[#4] at (#1A\i){};\node[#4] at (#1C\i){};}
    \foreach \j in {1,...,4}{\node[#4] at (#1B\j){};}
  }

  % Panel: state DeepONet
  \draw[panel] (-0.2,1.05) rectangle (6.1,4.25);
  \node[ptitle,anchor=south] at (2.95,4.30) {State DeepONet\; $\mathcal{G}_{e}$};
  \rednet{sb}{1.0}{3.40}{neuronB}
  \rednet{st}{1.0}{1.90}{neuronT}
  \node[nettag,rotate=90] at (0.45,3.40) {branch};
  \node[nettag,rotate=90] at (0.45,1.90) {trunk};
  \node[odot] (sdotu) at (4.5,2.98) {$\odot$};
  \node[odot] (sdots) at (4.5,2.32) {$\odot$};
  \draw[flow,shorten <=3pt] (sbC2) -- (sdotu);
  \draw[flow,shorten <=3pt] (sbC2) -- (sdots);
  \draw[flow,shorten <=3pt] (stC2) -- (sdotu);
  \draw[flow,shorten <=3pt] (stC2) -- (sdots);
  \node[notetag] at (4.5,1.80) {two inner products};

  % Panel: adjoint DeepONet
  \draw[panel] (-0.2,-2.85) rectangle (6.1,0.35);
  \node[ptitle,anchor=south] at (2.95,0.40) {Adjoint DeepONet\; $\mathcal{G}_{a}$};
  \rednet{ab}{1.0}{-0.50}{neuronB}
  \rednet{at}{1.0}{-2.00}{neuronT}
  \node[nettag,rotate=90] at (0.45,-0.50) {branch};
  \node[nettag,rotate=90] at (0.45,-2.00) {trunk};
  \node[odot] (adotu) at (4.5,-0.92) {$\odot$};
  \node[odot] (adots) at (4.5,-1.58) {$\odot$};
  \draw[flow,shorten <=3pt] (abC2) -- (adotu);
  \draw[flow,shorten <=3pt] (abC2) -- (adots);
  \draw[flow,shorten <=3pt] (atC2) -- (adotu);
  \draw[flow,shorten <=3pt] (atC2) -- (adots);
  \node[notetag] at (4.5,-2.10) {two inner products};

  % Inputs
  \node[inlab,anchor=east] (in1) at (-1.0,3.40) {$\ControlC,\,\ControlS,\,u_0,\,\sigma_0$};
  \node[inlab,anchor=east] (in2) at (-1.0,1.90) {time $t$};
  \node[inlab,anchor=east] (in3) at (-1.0,-0.50)
    {$\ControlC,\,\ControlS,\,u_0,\,\sigma_0$\\[-1pt]$l_1,\,l_2,\,k_1,\,k_2$};
  \node[inlab,anchor=east] (in4) at (-1.0,-2.00) {time $t$};
  \draw[flow] (in1.east) -- (-0.2,3.40);
  \draw[flow] (in2.east) -- (-0.2,1.90);
  \draw[flow] (in3.east) -- (-0.2,-0.50);
  \draw[flow] (in4.east) -- (-0.2,-2.00);

  % Outputs
  \node[result] (statesout) at (8.2,2.65) {states\\$u(t),\ \sigma(t)$};
  \node[result] (adjout) at (8.2,-1.25) {adjoint states\\$w(t),\ z(t)$};
  \draw[flow] (sdotu.east) -- (statesout.west |- sdotu);
  \draw[flow] (sdots.east) -- (statesout.west |- sdots);
  \draw[flow] (adotu.east) -- (adjout.west |- adotu);
  \draw[flow] (adots.east) -- (adjout.west |- adots);
  \node[font=\scriptsize] at (5.5,3.20) {$u(t)$};
  \node[font=\scriptsize] at (5.5,2.54) {$\sigma(t)$};
  \node[font=\scriptsize] at (5.5,-0.70) {$w(t)$};
  \node[font=\scriptsize] at (5.5,-1.36) {$z(t)$};

  % Stages of the gradient algorithm
  \node[stepbox,anchor=east] (evalJ) at (9.7,-4.1) {evaluation of $J_h$\\and of the gradient};
  \node[stepbox] (armijo) at (4.05,-4.1) {Armijo line search\\and projection};
  \node[stepbox,draw=blueacc,line width=0.8pt,anchor=west] (newc) at (-2.6,-4.1)
    {updated controls\\$\ControlC^{\,k+1},\ \ControlS^{\,k+1}$};
  \draw[flow,rounded corners=8pt] (statesout.east) -- (10.2,2.65)
    -- (10.2,-4.1) -- (evalJ.east);
  \draw[blueacc,line width=0.9pt] (adjout.east) -- (10.2,-1.25);
  \fill[blueacc] (10.2,-1.25) circle (1.7pt);
  \draw[flow] (evalJ.west) -- (armijo.east);
  \draw[flow] (armijo.west) -- (newc.east);

  % Feedback
  \draw[loopflow] (newc.west) -- (-3.9,-4.1) -- (-3.9,3.40) -- (in1.west);
  \draw[loopflow] (-3.9,-0.50) -- (in3.west);
  \fill[blueacc] (-3.9,-0.50) circle (1.7pt);
  
  \node[notetag,font=\scriptsize,text=black!60] at (3.55,-5.05)
    {the steps of the gradient algorithm are unchanged. The loop is repeated
     until the stopping criterion. };
\end{tikzpicture}
}
\caption{Integration of the supervised DeepONets into the optimal control algorithm (\textit{indirect strategy}).}
\label{fig:esquema_deeponet_edo}
\end{figure}

The \textit{direct strategy}, in contrast, employs a single DeepONet which approximates the operator
\begin{equation}\label{eq:Gc}
\mathcal{G}_{c}:[\ControlC,\ControlS,u_0,\sigma_0,l_1,l_2,k_1,k_2,k_3,k_4,c_{\max}]\longmapsto[\ControlC^{*},\ControlS^{*}],
\end{equation}
where $[\ControlC,\ControlS]$ are the \textit{initial} controls with which the descent algorithm of Section \ref{sec:algoritmo_ejemplo} starts and $[\ControlC^{*},\ControlS^{*}]$ is the pair of optimal controls that this algorithm produces. The input is larger than that of the previous operators because the resolution of the optimal control problem depends also on the weights $k_3,k_4$ and on the bound $c_{\max}$. Consequently, the input of the branch net has dimension $2m+9$ (two controls together with nine scalar input values). The output is again organized in two components, one per control, with the same construction of two inner products described above, so that a single evaluation of the network returns $[\ControlC^{*}(t),\ControlS^{*}(t)]$ at the queried time $t\in[0,T]$.\\

Figure \ref{fig:esquema_deeponet_directa_edo} illustrates this second strategy. Unlike Figure \ref{fig:esquema_deeponet_edo}, the stages of the optimal control algorithm no longer appear, since the network approximates the optimal controls directly.

\begin{figure}[H]
\centering
\resizebox{0.85\linewidth}{!}{%
\begin{tikzpicture}[x=1cm,y=1cm,line cap=round,line join=round]
  \definecolor{blueacc}{RGB}{20,70,255}
  \tikzset{
    flow/.style={blueacc,-{Latex[length=3mm,width=2mm]},line width=0.9pt},
    netline/.style={draw=black,line width=0.4pt},
    neuronB/.style={circle,draw=black,line width=0.55pt,fill=white,
      minimum size=0.21cm,inner sep=0pt},
    panel/.style={rounded corners=4pt,draw=black!60,line width=0.55pt},
    inlab/.style={rounded corners=2pt,draw=black!60,line width=0.55pt,fill=white,
      align=center,font=\small,inner sep=4pt},
    result/.style={rounded corners=2pt,draw=blueacc,line width=0.8pt,fill=white,
      align=center,font=\small,inner sep=5pt,minimum height=0.95cm},
    odot/.style={circle,draw=blueacc,line width=0.9pt,fill=white,
      minimum size=0.55cm,inner sep=0pt,font=\small},
    nettag/.style={font=\small\itshape},
    notetag/.style={font=\tiny,text=black!60},
    ptitle/.style={font=\small\bfseries}
  }

  % Miniature neural network: layers 3-4-3
  \newcommand{\redneta}[4]{% #1 prefix, #2 x, #3 y, #4 neuron style
    \foreach \i/\dy in {1/-0.36,2/0,3/0.36}{
      \coordinate (#1A\i) at (#2,#3+\dy);
      \coordinate (#1C\i) at (#2+1.5,#3+\dy);}
    \foreach \i/\dy in {1/-0.54,2/-0.18,3/0.18,4/0.54}{
      \coordinate (#1B\i) at (#2+0.75,#3+\dy);}
    \foreach \i in {1,2,3}{\foreach \j in {1,...,4}{
      \draw[netline] (#1A\i)--(#1B\j);
      \draw[netline] (#1C\i)--(#1B\j);}}
    \foreach \i in {1,2,3}{\node[#4] at (#1A\i){};\node[#4] at (#1C\i){};}
    \foreach \j in {1,...,4}{\node[#4] at (#1B\j){};}
  }

  % Panel: optimal-control DeepONet
  \draw[panel] (-0.2,1.05) rectangle (6.1,4.25);
  \node[ptitle,anchor=south] at (2.95,4.30) {Optimal-control DeepONet\; $\mathcal{G}_{c}$};
  \redneta{cb}{1.0}{3.40}{neuronB}
  \redneta{ct}{1.0}{1.90}{neuronB}
  \node[nettag,rotate=90] at (0.45,3.40) {branch};
  \node[nettag,rotate=90] at (0.45,1.90) {trunk};
  \node[odot] (cdotc) at (4.5,2.98) {$\odot$};
  \node[odot] (cdots) at (4.5,2.32) {$\odot$};
  \draw[flow,shorten <=3pt] (cbC2) -- (cdotc);
  \draw[flow,shorten <=3pt] (cbC2) -- (cdots);
  \draw[flow,shorten <=3pt] (ctC2) -- (cdotc);
  \draw[flow,shorten <=3pt] (ctC2) -- (cdots);
  \node[notetag] at (4.5,1.80) {two inner products};

  % Inputs
  \node[inlab,anchor=east] (din1) at (-1.0,3.40)
    {$\ControlC,\,\ControlS,\,u_0,\,\sigma_0,\,c_{\max}$\\[-1pt]$l_1,\,l_2,\,k_1,\,k_2,\,k_3,\,k_4$};
  \node[inlab,anchor=east] (din2) at (-1.0,1.90) {time $t$};
  \draw[flow] (din1.east) -- (-0.2,3.40);
  \draw[flow] (din2.east) -- (-0.2,1.90);

  % Outputs
  \node[result] (ctrlout) at (8.4,2.65) {optimal controls\\$\ControlC^*(t),\ \ControlS^*(t)$};
  \draw[flow] (cdotc.east) -- (ctrlout.west |- cdotc);
  \draw[flow] (cdots.east) -- (ctrlout.west |- cdots);
  \node[font=\scriptsize] at (5.6,3.20) {$\ControlC^*(t)$};
  \node[font=\scriptsize] at (5.6,2.54) {$\ControlS^*(t)$};

  \node[notetag,font=\scriptsize,text=black!60] at (3.4,0.55)
    {A single evaluation of the network. There is no iterative loop.};
\end{tikzpicture}
}
\caption{Direct approximation of the optimal controls by means of a single supervised DeepONet (\textit{direct strategy}).}
\label{fig:esquema_deeponet_directa_edo}
\end{figure}

\subsection{Computational setup and accuracy of the learned operators}\label{sec:configuracion}

In all the numerical experiments the final time is fixed to $T=2$, and the time interval is discretized with $N=400$ subintervals, so that $h=T/N=0.005$. The controls are represented by their values at $m=N=400$ \textit{equispaced} sensors, which coincide with the points $t_1,\dots,t_N$ of the time grid, so that the vector of sampled values \eqref{eq:vector_entrada_deeponet} of each control is exactly $(\ControlC^{1},\dots,\ControlC^{N})$, respectively $(\ControlS^{1},\dots,\ControlS^{N})$; the outputs of the three operators are stored at $400$ \textit{equispaced} observation points. The parameters of the state and adjoint systems are kept fixed at the values of Table \ref{tab:Parametros_edo}, the desired oxygen levels are $\sigma_d = \sigma_{dT} = 1$, and the projected gradient method with Armijo line search uses the parameters of Table \ref{tab:ParametrosArmijo}. The three DeepONets share the same architecture, except for the input dimension of the branch net, and were trained with the same loss function and optimizer. The complete architecture, the training hyperparameters, the sampling distributions used to build the supervised datasets and the cost of their generation are collected in Appendix \ref{ap:computacional}.

\begin{table}[H]
\centering
\begin{minipage}{0.43\textwidth}
\centering
\begin{tabular}{cc|cc}
\hline
\multicolumn{2}{c|}{\textbf{Tumor equation}} & \multicolumn{2}{c}{\textbf{Oxygen equation}} \\ \hline
\textbf{Parameter}      & \textbf{Value}     & \textbf{Parameter}      & \textbf{Value}     \\ \hline
$\alpha$                & $4$                & $A_{ox}$                & $0.15$              \\
$\widehat{\rho}$        & $3.5$              & $k_{ox}$                & $0.1$              \\
$b$                     & $1$                & $\beta$                 & $1$                \\
$\kappa$                & $1$                & $\gamma=P_{er}S_v$      & $0.1$                \\
                        &                    & $S_c$                   & $0.4$              \\ \hline
\end{tabular}
\caption{Parameters of the state and adjoint systems.}
\label{tab:Parametros_edo}
\end{minipage}
\hspace{0.08\textwidth}
\begin{minipage}{0.43\textwidth}
\centering
{\small
\begin{tabular}{c|c}
\hline
\textbf{Parameter} & \textbf{Value}\\ \hline
Maximum number of iterations & $800$ \\
Initial step size & $5\times 10^{-3}$ \\
Minimum step size & $10^{-8}$ \\
Step reduction factor $\nu$ & $0.5$ \\
Maximum number of reductions & $10$ \\
Armijo constant $\eta$ & $10^{-3}$ \\
Tolerance of the stopping criterion & $10^{-6}$ \\ \hline
\end{tabular}
}
\caption{Parameters of the projected gradient method and of the Armijo line search used in the update of the controls.}
\label{tab:ParametrosArmijo}
\end{minipage}
\end{table}
During the training of the state DeepONet the early stopping criterion was activated at iteration $474\,000$, whereas the adjoint DeepONet reached the maximum of $800\,000$ iterations and, in the \textit{direct strategy}, the optimal-control (from algorithm) DeepONet reached the maximum of $500\,000$ iterations. The observed training times were $2\,559.7$ s ($42$ min $40$ s), $3\,752.2$ s ($1$ h $2$ min $5$ s) and $2\,547.8$ s ($42$ min $28$ s), respectively. The validation set was evaluated every $2\,000$ steps and, at each of those evaluations, the parameters were stored whenever the validation error improved on the best value observed so far. Table \ref{tab:errores_deeponet_edo} presents the mean squared errors of the restored checkpoints on the two sets; the complete training histories are reported in Appendix \ref{ap:historias}.
\begin{table}[H]
\centering
\begin{tabular}{l|c|c}
\hline
\textbf{Quantity} & \textbf{Training MSE} & \textbf{Validation MSE} \\ \hline
\multicolumn{3}{c}{\textit{Indirect strategy}} \\ \hline
States, average & {$9.273880\times10^{-5}$} & {$9.850481\times10^{-5}$}  \\
$u$ & {$1.566716\times10^{-4}$} & {$1.646972\times10^{-4}$}  \\
$\sigma$ & {$2.880604\times10^{-5}$} & {$3.231240\times10^{-5}$} \\ \hline
Adjoint states, average & {$6.706016\times10^{-2}$} & {$1.808690\times10^{-1}$} \\
$w$ & {$5.277055\times10^{-2}$} & {$1.965048\times10^{-1}$}\\
$z$ & {$8.134978\times10^{-2}$} & {$1.652333\times10^{-1}$} \\ \hline
\multicolumn{3}{c}{\textit{Direct strategy}} \\ \hline
Optimal controls, average & $1.465341\times10^{-3}$ & $2.727353\times10^{-3}$ \\
$\ControlC^{*}$ & $1.511438\times10^{-3}$ & $2.627751\times10^{-3}$ \\
$\ControlS^{*}$ & $1.419244\times10^{-3}$ & $2.826954\times10^{-3}$ \\ \hline
\end{tabular}
\caption{Mean squared errors of the restored checkpoints of the DeepONets on the training and validation datasets.}
\label{tab:errores_deeponet_edo}
\end{table}

\subsection{Comparison on the optimal control problem}\label{sec:comparacion}

The questions in which we are interested now are the following:\\

\textit{When one of the two strategies based on DeepONets is used, is the solution of the control problem preserved? With which accuracy? How do they differ in terms of performance?}\\

In order to answer them, the same test set of $100$ random samples was solved with the three methods: the classical method of Section \ref{sec:algoritmo_ejemplo}, the indirect strategy, which runs the algorithm of Section \ref{sec:algoritmo_clasico} replacing the two solvers by the state and adjoint DeepONets, and the direct strategy, which consists in evaluating once the optimal-control DeepONet. For each sample, the bound $c_{\max}$ was drawn uniformly on $[0.05,2)$ and the six weights $l_1,l_2,k_1,k_2,k_3,k_4$ on $\{0,\dots,20\}$, following the sampling of Table \ref{tab:muestreo_deeponet_edo}; moreover, the initial control $\ControlC$ was projected onto the corresponding admissible set. The samples were generated with a seed different from the ones used in training and validation, and the three methods were run on the same $100$ samples and on the same computer. Next, the optimal control obtained with the classical method is taken as the reference.

\subsubsection*{Structure of the controls over the $100$ samples}

For the $100$ samples, Figure \ref{fig:errores_prueba_deeponet_a} shows the mean squared error per sample of each strategy with respect to the classical reference. In both cases the distribution of the error is strongly asymmetric, with most of the samples concentrated at small values and a few isolated peaks, which indicates a good approximation of the optimal control on these samples. In particular, for the \textit{indirect strategy}, $56$ of $100$ samples have an MSE in $\ControlC$ smaller than $10^{-3}$ and $93$ smaller than $10^{-2}$; analogously, for $\ControlS$, $83$ samples present an MSE smaller than $10^{-3}$ and $93$ smaller than $10^{-2}$. For the \textit{direct strategy}, $40$ samples lie below $10^{-3}$ in $\ControlC$ and $91$ below $10^{-2}$, whereas for $\ControlS$ these numbers are $66$ and $94$, respectively. \\

According to Figure \ref{fig:errores_prueba_deeponet_a}, the \textit{indirect strategy} is more accurate for most samples, since its median MSE in $\ControlC$ is $7.46\times10^{-4}$ compared to $1.40\times10^{-3}$ for the \textit{direct strategy}, and in $\ControlS$ it is $7.41\times10^{-5}$ compared to $3.18\times10^{-4}$. This is consistent with the fact that it preserves the optimization algorithm and only approximates the state and adjoint variables used by the algorithm, so that the projection onto the feasible set and the minimization of the functional continue to take effect at each iteration.
\newcommand{\imgdoscolerr}{0.47\textwidth}
\pgfplotsset{
estilofig4/.style={
scale only axis,
width=5.85cm, height=5.3cm,
xmin=0, xmax=101,
ymin=2e-8, ymax=6e-1,
xlabel={Sample number}, ylabel={MSE},
grid=both, ymajorgrids, grid style={gray!30},
legend cell align=left,
legend style={font=\footnotesize, at={(0.03,0.03)}, anchor=south west,
fill=white, fill opacity=0.92, text opacity=1, draw=gray!45,
rounded corners=2pt, inner xsep=5pt, inner ysep=3pt},
/pgf/number format/use comma=false,
/pgf/number format/set decimal separator={.},
filter discard warning=false, unbounded coords=discard,
tick label style={font=\small}, label style={font=\small},
title style={font=\small}
}}

\begin{figure}[H]
\centering
\begin{subfigure}[t]{\imgdoscolerr}
\centering
\begin{tikzpicture}
\begin{axis}[estilofig4, ymode=log, title={MSE of $\ControlC^{*}$}]
\addplot+[color={rgb,255:red,21;green,101;blue,192}, solid, line width=1.1pt, mark=none]
table[x=sample,y=mseC1] {figuras/errores_prueba.dat};
\addlegendentry{Indirect}
\addplot+[color={rgb,255:red,230;green,81;blue,0}, dashed, line width=1.1pt, mark=none]
table[x=sample,y=mseC2] {figuras/errores_prueba.dat};
\addlegendentry{Direct}
\end{axis}
\end{tikzpicture}
\end{subfigure}\hfill
\begin{subfigure}[t]{\imgdoscolerr}
\centering
\begin{tikzpicture}
\begin{axis}[estilofig4, ymode=log, title={MSE of $\ControlS^{*}$}]
\addplot+[color={rgb,255:red,21;green,101;blue,192}, solid, line width=1.1pt, mark=none]
table[x=sample,y=mseS1] {figuras/errores_prueba.dat};
\addlegendentry{Indirect}
\addplot+[color={rgb,255:red,230;green,81;blue,0}, dashed, line width=1.1pt, mark=none]
table[x=sample,y=mseS2] {figuras/errores_prueba.dat};
\addlegendentry{Direct}
\end{axis}
\end{tikzpicture}
\end{subfigure}
\caption{Mean squared error per sample (logarithmic scale), for the optimal control $\ControlC$ (left) and $\ControlS$ (right). This represents the discrepancy between the optimal controls obtained with the classical solver and those obtained with each one of the two strategies, over the $100$ samples of the test set.}
\label{fig:errores_prueba_deeponet_a}
\end{figure}

It is also interesting to observe the approximation of the optimal control on some specific samples. To this end, six samples were chosen at random from the test set mentioned above. The graphs of the control results obtained for each strategy in each random sample are shown in Appendix \ref{ap:samples}.

\subsubsection*{Admissibility of the controls}

The \textit{indirect strategy} structure retains the optimization algorithm and the projection algorithm, so its approximations are admissible; but it is important to point out that the optimal controls obtained from the \textit{direct strategy} are not always admissible. Over the $100$ samples of the test set, $99$ do not satisfy at all times the pointwise bounds $0\leq\ControlC^{*}\leq1$ and $100$ do not satisfy them for $\ControlS^{*}$, with maximum excesses of $0.249$ and $0.227$ and average excesses of $0.050$ and $0.029$, respectively; moreover, $54$ samples do not satisfy the integral constraint, although in this last case the excess is small, with a maximum value of $0.059$ and a median of $0.007$ over an interval of length $T=2$. Figure \ref{fig:admisibilidad_prueba_deeponet} quantifies this behavior, showing the percentage of samples whose controls satisfy each constraint up to a tolerance $\varepsilon$. The two curves of the \textit{direct strategy} start at $1\%$ and $0\%$ for the pointwise bounds and at $46\%$ for the integral constraint, and reach $100\%$ only for tolerances of order $0.25$ in $\ControlC^*$ and $0.061$ for the integral constraint. This is a direct consequence of the fact that the constraints defining $\mathcal{U}_{ad}^{h}$ are never imposed on the network output; in addition, this demonstrate the effectiveness and capacity of the \textit{direct strategy} to approximate these global and specific constraints.
\begin{figure}[H]
\centering
\begin{subfigure}[t]{0.49\textwidth}
\centering
\resizebox{\linewidth}{!}{%
\begin{tikzpicture}
\begin{axis}[
width=12cm, height=7cm,
title={Pointwise bounds $0\leq q\leq1$},
xmin=0, xmax=0.26, ymin=-2, ymax=104,
xtick={0,0.05,0.10,0.15,0.20,0.25},
xlabel={Tolerance $\varepsilon$}, ylabel={Samples satisfying (\%)},
grid=both, ymajorgrids, grid style={gray!30},
legend cell align=left,
legend style={font=\normalsize, at={(0.97,0.03)}, anchor=south east,
fill=white, fill opacity=0.92, text opacity=1, draw=gray!45,
rounded corners=2pt, inner xsep=6pt, inner ysep=3pt},
/pgf/number format/use comma=false,
/pgf/number format/set decimal separator={.},
tick label style={font=\normalsize}, label style={font=\normalsize},
title style={font=\normalsize}
]
\addplot+[color={rgb,255:red,198;green,40;blue,40}, solid, line width=1.6pt, mark=none]
table[x=eps,y=pctC2] {figuras/admisibilidad_puntual.dat};
\addlegendentry{Direct, $\ControlC^{*}$}
\addplot+[color={rgb,255:red,239;green,108;blue,0}, dashed, line width=1.6pt, mark=none]
table[x=eps,y=pctS2] {figuras/admisibilidad_puntual.dat};
\addlegendentry{Direct, $\ControlS^{*}$}
\end{axis}
\end{tikzpicture}%
}
\end{subfigure}
\hfill
\begin{subfigure}[t]{0.49\textwidth}
\centering
\resizebox{\linewidth}{!}{%
\begin{tikzpicture}
\begin{axis}[
width=12cm, height=7cm,
title={Integral constraint $h\sum_j c^{j}\leq c_{\max}$},
xmin=0, xmax=0.063, ymin=-2, ymax=104,
xlabel={Tolerance $\varepsilon$}, ylabel={Samples satisfying (\%)},
grid=both, ymajorgrids, grid style={gray!30},
legend cell align=left,
legend style={font=\normalsize, at={(0.97,0.03)}, anchor=south east,
fill=white, fill opacity=0.92, text opacity=1, draw=gray!45,
rounded corners=2pt, inner xsep=6pt, inner ysep=3pt},
/pgf/number format/use comma=false,
/pgf/number format/set decimal separator={.},
tick label style={font=\normalsize}, label style={font=\normalsize},
title style={font=\normalsize}
]
\addplot+[color={rgb,255:red,198;green,40;blue,40}, solid, line width=1.6pt, mark=none]
table[x=eps,y=pct2] {figuras/admisibilidad_integral.dat};
\addlegendentry{Direct strategy}
\end{axis}
\end{tikzpicture}%
}
\end{subfigure}
\caption{Percentage of the $100$ samples of the test set whose optimal controls satisfy each constraint up to a tolerance $\varepsilon$. The pointwise bounds (left graph) and integral constraint (right graph). The value at $\varepsilon=0$ is the percentage of samples that satisfy the constraint exactly.}
\label{fig:admisibilidad_prueba_deeponet}
\end{figure}

\subsubsection*{Computational cost}

Now we are now interested in answering the last question, concerning the computational cost. In Figure \ref{fig:tiempos_prueba_deeponet} the execution times are compared. In particular, the left panel shows the execution times for each sample, where a very different dispersion between the three methods is observed, since the times of the classical method vary between $0.028$ s and $1.949$ s, those of the indirect strategy between $0.044$ s and $6.528$ s, and those of the direct strategy between $6.85\times10^{-4}$ s and $8.98\times10^{-4}$ s. The classical method and the indirect strategy inherit their variability from the number of iterations that the descent algorithm needs, which changes from one sample to another, whereas the direct strategy is practically constant, as it always performs a single evaluation of the network.\\

Additionally, the right panel shows the cumulative time for the three methods. When the $100$ samples have been completed, the classical implementation has taken $64.51$ s against $70.86$ s of the indirect strategy and $0.074$ s of the direct strategy. The direct strategy is therefore about $878$ times faster than the classical method, whereas the indirect strategy is in fact marginally \textit{slower} than it, with a global ratio of $0.91$. This last result deserves a comment. The reference solver is here the discretize-then-optimize scheme of Section \ref{sec:esquema}, whose state and adjoint resolutions reduce, by Proposition \ref{prop:esquema_estados} and  \eqref{eq:adjunto}, to two scalar divisions per time step; solving them costs about the same as one forward pass of a DeepONet with a single input function. Replacing them by network evaluations therefore removes no significant cost, while the iterative loop, the line search and the projection remain. In other words, the indirect strategy pays off only when the underlying solver is expensive compared with an inference pass, which is not the case for an explicitly solvable discrete scheme such as the present one. It is important to point out that these measurements were performed only once on a single computer.
\begin{figure}[H]
\centering
\begin{subfigure}[t]{0.49\textwidth}
\centering
\resizebox{\linewidth}{!}{%
\begin{tikzpicture}
\begin{axis}[
width=12cm, height=7cm,
title={Time per sample},
xmin=0, xmax=101, ymin=4e-4, ymax=10, ymode=log,
xlabel={Sample number}, ylabel={Time (s)},
grid=both, ymajorgrids, grid style={gray!30},
legend cell align=left,
legend style={font=\normalsize, at={(0.5,0.98)}, anchor=north, legend columns=3,
fill=white, fill opacity=0.92, text opacity=1, draw=gray!45,
rounded corners=2pt, inner xsep=6pt, inner ysep=3pt},
/pgf/number format/use comma=false,
/pgf/number format/set decimal separator={.},
tick label style={font=\normalsize}, label style={font=\normalsize},
title style={font=\normalsize}
]
\addplot+[color={rgb,255:red,21;green,101;blue,192}, solid, line width=1.1pt, mark=none]
table[x=sample,y=tclassic] {figuras/tiempos_prueba.dat};
\addlegendentry{Classical}
\addplot+[color={rgb,255:red,46;green,125;blue,50}, dash pattern=on 6pt off 2.5pt on 1.2pt off 2.5pt, line width=1.1pt, mark=none]
table[x=sample,y=t1] {figuras/tiempos_prueba.dat};
\addlegendentry{Indirect}
\addplot+[color={rgb,255:red,198;green,40;blue,40}, dashed, line width=1.1pt, mark=none]
table[x=sample,y=t2] {figuras/tiempos_prueba.dat};
\addlegendentry{Direct}
\end{axis}
\end{tikzpicture}%
}
\end{subfigure}
\hfill
\begin{subfigure}[t]{0.49\textwidth}
\centering
\resizebox{\linewidth}{!}{%
\begin{tikzpicture}
\begin{axis}[
width=12cm, height=7cm,
title={Cumulative time},
xmin=0, xmax=101, ymin=-3, ymax=80,
xlabel={Solved samples}, ylabel={Cumulative time (s)},
grid=both, ymajorgrids, grid style={gray!30},
legend cell align=left,
legend style={font=\normalsize, at={(0.03,0.97)}, anchor=north west,
fill=white, fill opacity=0.92, text opacity=1, draw=gray!45,
rounded corners=2pt, inner xsep=6pt, inner ysep=3pt},
/pgf/number format/use comma=false,
/pgf/number format/set decimal separator={.},
/pgf/number format/1000 sep={},
scaled y ticks=false,
tick label style={font=\normalsize}, label style={font=\normalsize},
title style={font=\normalsize}
]
\addplot+[color={rgb,255:red,21;green,101;blue,192}, solid, line width=1.6pt, mark=none]
table[x=sample,y=cumclassic] {figuras/tiempos_prueba.dat};
\addlegendentry{Classical}
\addplot+[color={rgb,255:red,46;green,125;blue,50}, dash pattern=on 6pt off 2.5pt on 1.2pt off 2.5pt, line width=1.6pt, mark=none]
table[x=sample,y=cum1] {figuras/tiempos_prueba.dat};
\addlegendentry{Indirect}
\addplot+[color={rgb,255:red,198;green,40;blue,40}, dashed, line width=1.6pt, mark=none]
table[x=sample,y=cum2] {figuras/tiempos_prueba.dat};
\addlegendentry{Direct}
\end{axis}
\end{tikzpicture}%
}
\end{subfigure}
\caption{Time needed to solve the control problem for the $100$ samples of the test set: individual time in logarithmic scale (left) and cumulative time (right). In the right panel the curve of the classical method and that of the indirect strategy are practically superimposed, whereas the one of the direct strategy is indistinguishable from the horizontal axis.}
\label{fig:tiempos_prueba_deeponet}
\end{figure}

Table \ref{tab:deeponet_clasico_control} shows some additional statistics on the performance of the classical method against the two strategies based on DeepONets.
\begin{table}[H]
\centering
{\small
\renewcommand{\arraystretch}{1.12}
\begin{tabular}{l|c|c|c}
\hline
\textbf{Quantity} & \textbf{Classical} & \textbf{Indirect} & \textbf{Direct} \\ \hline
Total time ($100$ samples) & $64.506$ s & $70.862$ s & $0.074$ s \\
Average time per sample & $0.645$ s & $0.709$ s & $7.35\times10^{-4}$ s \\
Minimum time & {$0.028$ s} & $0.044$ s & $6.85\times10^{-4}$ s \\
Maximum time & $1.949$ s & $6.528$ s & $8.98\times10^{-4}$ s \\ \hline
Speedup with respect to the classical method & --- & $0.91\times$ & $878\times$ \\ \hline
Average number of iterations & {$161.8$} & {$125.4$} & $1$ \\
Minimum--maximum number of iterations & {$8$--$465$} & {$8$--$801$} & $1$--$1$ \\ \hline
\end{tabular}
}
\caption{Statistics of time and iterations of the three implementations over the test set.}
\label{tab:deeponet_clasico_control}
\end{table}

\noindent
{\footnotesize
\textbf{Acknowledgements. }The first author was supported by the Vicerrector\'ia Acad\'emica of the Universidad Industrial de Santander. Second author thanks Grant I+D+I PID2023-149182NB-I00 funded by MICIU/AEI/10.13039/501100011033 and ERDF/EU and IMUS-Maria de Maeztu grant CEX2024-001517-M - Apoyo a Unidades de Excelencia María de Maeztu, funded by MICIU/AEI/ 10.13039/501100011033.  The third author was supported by the Vicerretoría de Investigación y Extension of the Universidad Industrial de Santander, Proyecto 4838.
}

%%%%%%%%%%%%%%%%%%%%%%%%%%%%%%%%%%%%%%%%%%%%%%%%%%%%%%%%%%%%%
% BIBLIOGRAPHY
%%%%%%%%%%%%%%%%%%%%%%%%%%%%%%%%%%%%%%%%%%%%%%%%%%%%%%%%%%%%%

\newpage
\appendix

\section{Computational setup and construction of the datasets}\label{ap:computacional}
\subsection{Architecture and training of the three DeepONets}\label{ap:setup}

The architecture of the three DeepONets, both for the branch and for the trunk subnetworks, is presented in Table \ref{tab:config_deeponet_edo}; there, the input dimensions of the branch net correspond to the values of the two controls at the $m=400$ equidistant sensors, together with the scalar inputs of each operator, whereas the trunk net receives the queried time. The weights of the linear layers were initialized with a Xavier normal distribution \cite{Glorot} and all the biases were set to zero. Except for the input dimension, which is the only feature distinguishing the three networks, the architecture is identical in both strategies.

\begin{table}[H]
\centering
\begin{tabular}{c|c|c|c}
\hline
 & \multicolumn{2}{c|}{\textbf{\textit{Indirect strategy}}} & \textbf{\textit{Direct strategy}} \\\hline
\textbf{Hyperparameter} & \textbf{States} & \textbf{Adjoint states} & \textbf{Optimal controls} \\ \hline
Input dimension of the \textit{branch} & $802$ & $806$ & $809$ \\
Input dimension of the \textit{trunk} & $1$ & $1$ & $1$ \\
Hidden layers per subnetwork & $3$ & $3$ & $3$ \\
Neurons per hidden layer & $200$ & $200$ & $200$ \\
Latent dimension per component ($\ell$) & $200$ & $200$ & $200$ \\
Output dimension of the \textit{branch} ($2\ell$) & $400$ & $400$ & $400$ \\
Hidden activation & \texttt{tanh} & \texttt{tanh} & \texttt{tanh} \\
Output layer & linear & linear & linear \\
Output components & $u,\ \sigma$ & $w,\ z$ & $\ControlC^*,\ \ControlS^*$ \\
Trainable parameters & $442\,402$ & $443\,202$ & $443\,802$ \\ \hline
\end{tabular}
\caption{Architecture of the three supervised DeepONets.}
\label{tab:config_deeponet_edo}
\end{table}

The amount of data of the datasets used for training and the validation is presented in Table \ref{tab:conjuntos_deeponet_edo}. Each strategy uses its own dataset. In the \textit{indirect strategy}, each sample consists of the controls $[\ControlC,\ControlS]$ evaluated at the sensors, the initial conditions $[u_0,\sigma_0]$, the weights $[k_1,k_2,l_1,l_2]$ of the objective functional which are involved in the equations of the adjoint variables, and the corresponding state variables $[u,\sigma]$ and adjoint variables $[w,z]$, obtained by means of the scheme of Section \ref{sec:esquema}. In the \textit{direct strategy}, each sample consists of the \textit{initial} controls $[\ControlC,\ControlS]$ evaluated at the sensors, the initial conditions $[u_0,\sigma_0]$, the six weights $[k_1,k_2,k_3,k_4,l_1,l_2]$ of the objective functional, the integral constraint $c_{\max}$, and the pair of optimal controls $[\ControlC^{*},\ControlS^{*}]$ produced by the complete classical algorithm. In both cases, \textit{Samples} corresponds to one of those choices of inputs and, for each sample, the corresponding outputs are stored evaluated at $400$ observation points; moreover, \textit{sample--time pairs} represents the total amount of observation points stored in each dataset.

\begin{table}[H]
\centering
\begin{tabular}{c|c|c}
\hline
\textbf{Quantity} & \textbf{Training} & \textbf{Validation} \\ \hline
\multicolumn{3}{c}{\textit{Indirect strategy}} \\ \hline
Samples & $80\,000$ & $20\,000$ \\
Sample--time pairs & $32\,000\,000$ & $8\,000\,000$ \\ \hline
\multicolumn{3}{c}{\textit{Direct strategy}} \\ \hline
Samples & $80\,000$ & $20\,000$ \\
Sample--time pairs & $32\,000\,000$ & $8\,000\,000$ \\ \hline
\end{tabular}
\caption{Composition of the supervised datasets of each strategy.}
\label{tab:conjuntos_deeponet_edo}
\end{table}

Both strategies use datasets of the same size. Generating one sample of the \textit{direct strategy} is nevertheless considerably more expensive, since it requires running the optimal control algorithm completely, whereas one sample of the \textit{indirect strategy} only requires one forward and one backward resolution; this is reflected in the generation times reported below and not in the number of samples.\\

The three networks were trained with the same loss function, which represents the average of the mean squared error of its two output components, evaluated on the sample--time pairs of the corresponding mini-batch. As optimizer we employed Adam \cite{Adam} with an initial learning rate of $3\times10^{-4}$, varying by means of an exponential decay. Additionally, in order to avoid numerical difficulties, we used gradient norm clipping with maximum value equal to $1$ (see \cite{Pascanu}). The data corresponding to the training, which are common to the three networks, are summarized in Table \ref{tab:entrenamiento_deeponet_edo}.

\begin{table}[H]
\centering
\begin{tabular}{c|c}
\hline
\textbf{Parameter} & \textbf{Value}\\ \hline
\shortstack[c]{Samples per training mini-batch.} & $300$ \\
\shortstack[c]{Samples per batch during validation.} & $512$ \\
\shortstack[c]{Maximum number of optimization\\iterations per DeepONet} & $500\,000$ (states and optimal controls); $800\,000$ (adjoint) \\
\shortstack[c]{Minimum number of iterations before\\allowing the stopping.} & $100\,000$ \\
\shortstack[c]{Evaluation of the validation set.} & \shortstack[c]{step $1$ and afterwards every $2\,000$ steps.} \\
\shortstack[c]{Early stopping criterion.} & validation MSE smaller than $10^{-4}$. \\
Selection of the checkpoint. & \shortstack[c]{smallest validation MSE.} \\ \hline
\end{tabular}
\caption{Training and validation data of the DeepONets, common to the three networks.}
\label{tab:entrenamiento_deeponet_edo}
\end{table}

The implementation was carried out in Python (\texttt{3.13.12}); specifically, the library \texttt{PyTorch} (\texttt{2.11.0}) was used for the construction, the training and the inference of the networks, while the classical solver of Section \ref{sec:esquema} was implemented directly in Python. All the computations were run on a single computer, a MacBook Air M4 (2025) with 16 GB of RAM, on which the generation of the data was carried out on CPU, whereas the training and the inference employed the MPS acceleration of PyTorch. It is important to point out that the generation of the training and validation datasets of the \textit{direct strategy} was distributed over $8$ parallel processes, in order to reduce the computational cost, whereas the generation of the datasets of the \textit{indirect strategy} was run sequentially.

\subsection{Construction of the training and validation datasets}\label{ap:entrenamiento}

For each sample, the controls $[\ControlC,\ControlS]$, the initial conditions and the weights of the objective functional were chosen independently; the distributions employed and the temporal discretization are summarized in Table \ref{tab:muestreo_deeponet_edo}. In order that the learned operator was not associated with a single type of control, each function $\ControlC$ and $\ControlS$ was taken from one of the four families of Table \ref{tab:familias_controles_dataset}, chosen at random and independently for each control, and these were mapped onto the range $[0,1]$. The families, their probabilities and their random construction are the same in both strategies; what changes is the role played by the sampled controls, since in the \textit{indirect strategy} they are the controls at which the state and adjoint operators are evaluated, whereas in the \textit{direct strategy} they are the \textit{initial} controls with which the algorithm producing the label starts.

\begin{table}[H]
\centering
{\small
\setlength{\tabcolsep}{7pt}
\renewcommand{\arraystretch}{1.12}
\begin{tabular}{p{7.00cm}|p{6.00cm}}
\hline
\textbf{Quantity} & \textbf{Value} \\ \hline
\multicolumn{2}{c}{\textit{Common to both strategies}} \\ \hline
Time interval & $[0,2]$ \\
Sensors per control & $400$ equidistant points \\
Observation points & $400$ equidistant points \\
Range of $\ControlC$ and $\ControlS$ & $[0,1]$ \\
Distribution of $u_0$ & Continuous uniform on $[10^{-1},4]$ \\
Distribution of $\sigma_0$ & Continuous uniform on $[10^{-3},2]$ \\
Oxygen targets & $\sigma_d=\sigma_{dT}=1$ \\ \hline
\multicolumn{2}{c}{\textit{Indirect strategy}} \\ \hline
$l_1,l_2,k_1,k_2$ & Discrete uniform on $\{0,\dots,20\}$ \\ \hline
\multicolumn{2}{c}{\textit{Direct strategy}} \\ \hline
$l_1,l_2,k_1,k_2,k_3,k_4$ & Discrete uniform on $\{0,\dots,20\}$ \\
Distribution of $c_{\max}$ & Continuous uniform on $[0.05,2)$ \\
Constraint on the initial $\ControlC$ & $\int_0^{2}\ControlC(t)\,dt<c_{\max}$ \\ \hline
\end{tabular}
}
\caption{Discretization and distributions used to build the training data.}
\label{tab:muestreo_deeponet_edo}
\end{table}

\begin{table}[H]
\centering
\renewcommand{\arraystretch}{1.14}
\begin{tabular}{l|c|p{8.60cm}}
\hline
\textbf{Family} & \textbf{Probability} & \textbf{Random construction} \\ \hline
Piecewise constant & $17\%$ & Between $1$ and $8$ intervals of minimal length $0.10$, where the values are taken from a $\operatorname{Beta}(0.8,0.8)$ distribution. With a probability $0.45,$ one of the pieces is forced to be $0$ or $1$. \\[5pt]
Bang-bang & $35\%$ & Between $2$ and $7$ intervals with the same rule for the lengths, where the constants $0$ or $1$ are considered on each interval. \\[5pt]
PCHIP & $30\%$ & Between $4$ and $10$ equispaced nodes are considered, with values $\operatorname{Beta}(0.8,0.8)$. Then, from these nodes, a continuous cubic interpolation is performed. With a probability $0.50$ one node is fixed to $0$ or $1$. \\[5pt]
Sinusoidal & $18\%$ & Functions of the form $0.5+0.5\sin(\pi ft+\phi)$, with $f\in(0.5,4)$ and $\phi\in(0,2\pi)$. \\ \hline
\end{tabular}
\caption{Families of functions used to generate independently the controls $\ControlC$ and $\ControlS$.}
\label{tab:familias_controles_dataset}
\end{table}

In the \textit{indirect strategy} the sampling imposes only the pointwise constraints $0\leq\ControlC,\ControlS\leq1$; the integral constraint was not enforced, with the aim of obtaining a better generalization during the training. This is admissible because the operators $\mathcal{G}_e$ and $\mathcal{G}_a$ are well defined for any pair of bounded controls, by Theorem \ref{teo:existencia_edo} and Proposition \ref{prop:esquema_estados}, and the admissibility is guaranteed afterwards by the projection onto the set of admissible controls inside the optimal control algorithm. In spite of this, $18\,889$ of the $80\,000$ controls $\ControlC$ of the training dataset ($23.61\%$) and $4\,626$ of the $20\,000$ of the validation dataset ($23.13\%$) satisfy the integral constraint $\int_0^{2}\ControlC<0.75$, which corresponds to a representative value of the bound $c_{\max}$.\\

In the \textit{direct strategy}, in contrast, the constraint must indeed be imposed in order to have an admissible initial control. Thus, for each sample a bound $c_{\max}$ was generated and the initial control $\ControlC$ was projected onto the admissible set by means of the procedure described at the end of Section \ref{sec:esquema}, so that $\int_0^{2}\ControlC(t)\,dt<c_{\max}$ holds strictly. By sampling $c_{\max}$ on the whole range $[0.05,2)$, the network learns from scenarios with a very limited total therapy up to scenarios in which the integral constraint is essentially inactive. Moreover, a candidate sample is discarded and replaced whenever the algorithm producing its label does not converge within \texttt{maxIter} iterations or the resulting control fails the validation test \eqref{eq:filtro_general}; this affected $4\,469$ of the $84\,469$ ($5.29\%$) attempted training samples ($748$ non-convergent, $3\,721$ failing the validation test) and $1\,191$ of the $21\,191$ ($5.62\%$) attempted validation samples ($217$ non-convergent, $974$ failing the validation test).\\

Tables \ref{tab:dataset_deeponet_edo} and \ref{tab:dataset_deeponet_directa_edo} summarize the cost of the generation of the datasets of each strategy. For the \textit{indirect strategy} the process was run sequentially on CPU and took in total $2$ min $34.2$ s. For the \textit{direct strategy}, in contrast, $8$ parallel processes were used and even so the process took $5$ h $50$ min $57.3$ s. 

\begin{table}[H]
\centering
{\small
\setlength{\tabcolsep}{4pt}
\begin{tabular}{l|r|r|c}
\hline
\textbf{Dataset} & \textbf{Samples} & \textbf{Time (s)} & \textbf{Size (GB)} \\ \hline
Training & $80\,000$ & $123.0$  & $0.770$ \\
Validation & $20\,000$ & $31.3$ & $0.192$ \\ \hline
Total & $100\,000$ & $154.2$ & $0.962$ \\ \hline
\end{tabular}
}
\caption{Computational costs of the generation of the datasets of the \textit{indirect strategy}.}
\label{tab:dataset_deeponet_edo}
\end{table}

\begin{table}[H]
\centering
{\small
\setlength{\tabcolsep}{4pt}
\begin{tabular}{l|r|r|c}
\hline
\textbf{Dataset} & \textbf{Samples} & \textbf{Time (s)} & \textbf{Size (GB)} \\ \hline
Training & $80\,000$ & $17\,313.5$  & $0.515$ \\
Validation & $20\,000$ & $3\,743.7$ & $0.129$ \\ \hline
Total & $100\,000$ & $21\,057.3$ & $0.644$ \\ \hline
\end{tabular}
}
\caption{Computational costs of the generation of the datasets of the \textit{direct strategy}.}
\label{tab:dataset_deeponet_directa_edo}
\end{table}

\subsection{Training histories of the three networks}\label{ap:historias}

In order to complement the values of Table \ref{tab:errores_deeponet_edo}, Figure \ref{fig:perdidas_deeponet_edo} presents the training and validation curves of the three networks. These curves allow one to verify the evolution of the fitting and to distinguish the behavior of each output component.

\begin{figure}[H]
\centering
\begin{subfigure}[t]{0.49\textwidth}
\centering
\resizebox{\linewidth}{!}{%
\begin{tikzpicture}
\begin{axis}[
width=12cm, height=7cm,
title={States: training},
xmin=-23700, xmax=497700,
ymin=2.808734e-5, ymax=6.536413,
ymode=log,
xtick={0,50000,100000,150000,200000,250000,300000,350000,400000,450000},
xticklabels={0,50,100,150,200,250,300,350,400,450},
xlabel={Step ($\times 10^{3}$)}, ylabel={MSE},
grid=both, ymajorgrids, grid style={gray!30},
legend cell align=left,
legend style={font=\normalsize, at={(0.97,0.97)}, anchor=north east,
fill=white, fill opacity=0.92, text opacity=1, draw=gray!45,
rounded corners=2pt, inner xsep=6pt, inner ysep=4pt},
/pgf/number format/use comma=false,
/pgf/number format/set decimal separator={.},
/pgf/number format/1000 sep={},
scaled x ticks=false,
tick label style={font=\normalsize},
label style={font=\normalsize},
title style={font=\normalsize}
]
\addplot+[color={rgb,255:red,31;green,119;blue,180}, solid, line width=1.5pt, mark=none]
table[x=step,y=total] {figuras/perdidas_estados_entrenamiento.dat};
\addlegendentry{Total}
\addplot+[color={rgb,255:red,255;green,127;blue,14}, solid, line width=1.5pt, mark=none]
table[x=step,y=u] {figuras/perdidas_estados_entrenamiento.dat};
\addlegendentry{$u$}
\addplot+[color={rgb,255:red,44;green,160;blue,44}, solid, line width=1.5pt, mark=none]
table[x=step,y=sigma] {figuras/perdidas_estados_entrenamiento.dat};
\addlegendentry{$\sigma$}
\end{axis}
\end{tikzpicture}%
}
\end{subfigure}
\hfill
\begin{subfigure}[t]{0.49\textwidth}
\centering
\resizebox{\linewidth}{!}{%
\begin{tikzpicture}
\begin{axis}[
width=12cm, height=7cm,
title={States: validation},
xmin=-23700, xmax=497700,
ymin=3.231240e-5, ymax=2.214394,
ymode=log,
xtick={0,50000,100000,150000,200000,250000,300000,350000,400000,450000},
xticklabels={0,50,100,150,200,250,300,350,400,450},
xlabel={Step ($\times 10^{3}$)}, ylabel={MSE},
grid=both, ymajorgrids, grid style={gray!30},
legend cell align=left,
legend style={font=\normalsize, at={(0.97,0.97)}, anchor=north east,
fill=white, fill opacity=0.92, text opacity=1, draw=gray!45,
rounded corners=2pt, inner xsep=6pt, inner ysep=4pt},
/pgf/number format/use comma=false,
/pgf/number format/set decimal separator={.},
/pgf/number format/1000 sep={},
scaled x ticks=false,
tick label style={font=\normalsize},
label style={font=\normalsize},
title style={font=\normalsize}
]
\addplot+[color={rgb,255:red,31;green,119;blue,180}, solid, line width=1.5pt, mark=none]
table[x=step,y=total] {figuras/perdidas_estados_validacion.dat};
\addlegendentry{Total}
\addplot+[color={rgb,255:red,255;green,127;blue,14}, solid, line width=1.5pt, mark=none]
table[x=step,y=u] {figuras/perdidas_estados_validacion.dat};
\addlegendentry{$u$}
\addplot+[color={rgb,255:red,44;green,160;blue,44}, solid, line width=1.5pt, mark=none]
table[x=step,y=sigma] {figuras/perdidas_estados_validacion.dat};
\addlegendentry{$\sigma$}
\end{axis}
\end{tikzpicture}%
}
\end{subfigure}

\medskip

\begin{subfigure}[t]{0.49\textwidth}
\centering
\resizebox{\linewidth}{!}{%
\begin{tikzpicture}
\begin{axis}[
width=12cm, height=7cm,
title={Adjoint states: training},
xmin=-40000, xmax=840000,
ymin=3.837314e-2, ymax=602.1691,
ymode=log,
xtick={0,100000,200000,300000,400000,500000,600000,700000,800000},
xticklabels={0,100,200,300,400,500,600,700,800},
xlabel={Step ($\times 10^{3}$)}, ylabel={MSE},
grid=both, ymajorgrids, grid style={gray!30},
legend cell align=left,
legend style={font=\normalsize, at={(0.97,0.97)}, anchor=north east,
fill=white, fill opacity=0.92, text opacity=1, draw=gray!45,
rounded corners=2pt, inner xsep=6pt, inner ysep=4pt},
/pgf/number format/use comma=false,
/pgf/number format/set decimal separator={.},
/pgf/number format/1000 sep={},
scaled x ticks=false,
tick label style={font=\normalsize},
label style={font=\normalsize},
title style={font=\normalsize}
]
\addplot+[color={rgb,255:red,31;green,119;blue,180}, solid, line width=1.5pt, mark=none]
table[x=step,y=total] {figuras/perdidas_adjuntos_entrenamiento.dat};
\addlegendentry{Total}
\addplot+[color={rgb,255:red,255;green,127;blue,14}, solid, line width=1.5pt, mark=none]
table[x=step,y=w] {figuras/perdidas_adjuntos_entrenamiento.dat};
\addlegendentry{$w$}
\addplot+[color={rgb,255:red,44;green,160;blue,44}, solid, line width=1.5pt, mark=none]
table[x=step,y=z] {figuras/perdidas_adjuntos_entrenamiento.dat};
\addlegendentry{$z$}
\end{axis}
\end{tikzpicture}%
}
\end{subfigure}
\hfill
\begin{subfigure}[t]{0.49\textwidth}
\centering
\resizebox{\linewidth}{!}{%
\begin{tikzpicture}
\begin{axis}[
width=12cm, height=7cm,
title={Adjoint states: validation},
xmin=-40000, xmax=840000,
ymin=0.1589959, ymax=743.0569,
ymode=log,
xtick={0,100000,200000,300000,400000,500000,600000,700000,800000},
xticklabels={0,100,200,300,400,500,600,700,800},
xlabel={Step ($\times 10^{3}$)}, ylabel={MSE},
grid=both, ymajorgrids, grid style={gray!30},
legend cell align=left,
legend style={font=\normalsize, at={(0.97,0.97)}, anchor=north east,
fill=white, fill opacity=0.92, text opacity=1, draw=gray!45,
rounded corners=2pt, inner xsep=6pt, inner ysep=4pt},
/pgf/number format/use comma=false,
/pgf/number format/set decimal separator={.},
/pgf/number format/1000 sep={},
scaled x ticks=false,
tick label style={font=\normalsize},
label style={font=\normalsize},
title style={font=\normalsize}
]
\addplot+[color={rgb,255:red,31;green,119;blue,180}, solid, line width=1.5pt, mark=none]
table[x=step,y=total] {figuras/perdidas_adjuntos_validacion.dat};
\addlegendentry{Total}
\addplot+[color={rgb,255:red,255;green,127;blue,14}, solid, line width=1.5pt, mark=none]
table[x=step,y=w] {figuras/perdidas_adjuntos_validacion.dat};
\addlegendentry{$w$}
\addplot+[color={rgb,255:red,44;green,160;blue,44}, solid, line width=1.5pt, mark=none]
table[x=step,y=z] {figuras/perdidas_adjuntos_validacion.dat};
\addlegendentry{$z$}
\end{axis}
\end{tikzpicture}%
}
\end{subfigure}

\medskip

\begin{subfigure}[t]{0.49\textwidth}
\centering
\resizebox{\linewidth}{!}{%
\begin{tikzpicture}
\begin{axis}[
width=12cm, height=7cm,
title={Optimal controls: training},
xmin=-24998.95, xmax=524999.95,
ymin=1.056084e-3, ymax=0.588288,
ymode=log,
xtick={0,100000,200000,300000,400000,500000},
xticklabels={0,100,200,300,400,500},
xlabel={Step ($\times 10^{3}$)}, ylabel={MSE},
grid=both, ymajorgrids, grid style={gray!30},
legend cell align=left,
legend style={font=\normalsize, at={(0.97,0.97)}, anchor=north east,
fill=white, fill opacity=0.92, text opacity=1, draw=gray!45,
rounded corners=2pt, inner xsep=6pt, inner ysep=4pt},
/pgf/number format/use comma=false,
/pgf/number format/set decimal separator={.},
/pgf/number format/1000 sep={},
scaled x ticks=false,
tick label style={font=\normalsize},
label style={font=\normalsize},
title style={font=\normalsize}
]
\addplot+[color={rgb,255:red,31;green,119;blue,180}, solid, line width=1.5pt, mark=none]
table[x=step,y=total] {figuras/perdidas_controles_entrenamiento.dat};
\addlegendentry{Total}
\addplot+[color={rgb,255:red,255;green,127;blue,14}, solid, line width=1.5pt, mark=none]
table[x=step,y=C] {figuras/perdidas_controles_entrenamiento.dat};
\addlegendentry{$\ControlC^*$}
\addplot+[color={rgb,255:red,44;green,160;blue,44}, solid, line width=1.5pt, mark=none]
table[x=step,y=S] {figuras/perdidas_controles_entrenamiento.dat};
\addlegendentry{$\ControlS^*$}
\end{axis}
\end{tikzpicture}%
}
\end{subfigure}
\hfill
\begin{subfigure}[t]{0.49\textwidth}
\centering
\resizebox{\linewidth}{!}{%
\begin{tikzpicture}
\begin{axis}[
width=12cm, height=7cm,
title={Optimal controls: validation},
xmin=-24998.95, xmax=524999.95,
ymin=2.529405e-3, ymax=1.122407,
ymode=log,
xtick={0,100000,200000,300000,400000,500000},
xticklabels={0,100,200,300,400,500},
xlabel={Step ($\times 10^{3}$)}, ylabel={MSE},
grid=both, ymajorgrids, grid style={gray!30},
legend cell align=left,
legend style={font=\normalsize, at={(0.97,0.97)}, anchor=north east,
fill=white, fill opacity=0.92, text opacity=1, draw=gray!45,
rounded corners=2pt, inner xsep=6pt, inner ysep=4pt},
/pgf/number format/use comma=false,
/pgf/number format/set decimal separator={.},
/pgf/number format/1000 sep={},
scaled x ticks=false,
tick label style={font=\normalsize},
label style={font=\normalsize},
title style={font=\normalsize}
]
\addplot+[color={rgb,255:red,31;green,119;blue,180}, solid, line width=1.5pt, mark=none]
table[x=step,y=total] {figuras/perdidas_controles_validacion.dat};
\addlegendentry{Total}
\addplot+[color={rgb,255:red,255;green,127;blue,14}, solid, line width=1.5pt, mark=none]
table[x=step,y=C] {figuras/perdidas_controles_validacion.dat};
\addlegendentry{$\ControlC^*$}
\addplot+[color={rgb,255:red,44;green,160;blue,44}, solid, line width=1.5pt, mark=none]
table[x=step,y=S] {figuras/perdidas_controles_validacion.dat};
\addlegendentry{$\ControlS^*$}
\end{axis}
\end{tikzpicture}%
}
\end{subfigure}
\caption{Evolution of the error during the training of the three DeepONets: the state and the adjoint networks, corresponding to the \textit{indirect strategy}, and the optimal-control network, corresponding to the \textit{direct strategy}.}
\label{fig:perdidas_deeponet_edo}
\end{figure}

Figure \ref{fig:distribucion_error_validacion} shows, for each of the three networks, the histogram of the mean squared error calculated in each sample over the validation set, along with its $50$, $90$, and $99$ percentiles. In all three cases, the distribution exhibits a strong right-skew, confirming a nice generalization of the three networks on the validation set.
\newcommand{\imgdoscolhist}{0.47\textwidth}
\pgfplotsset{
estilohistval/.style={
scale only axis,
width=5.72cm, height=3.5cm,
fill=blue!45, draw=blue!75!black,
xlabel={MSE of the sample}, ylabel={Validation samples},
grid=both, grid style={gray!30},
scaled y ticks=false,
yticklabel style={text width=1.05cm, align=right},
tick label style={font=\footnotesize}, label style={font=\small},
}}

\newcommand{\lineamuestraedo}[2]{\tikz[baseline=-0.6ex]{\draw[#1] (0,0)--(0.62,0);}\,#2}
\newcommand{\leyendamuestrasedo}{%
\begingroup\footnotesize
\lineamuestraedo{color={rgb,255:red,21;green,101;blue,192}, line width=1.4pt}{$\ControlC$ classical}\quad
\lineamuestraedo{color={rgb,255:red,46;green,125;blue,50}, line width=1.4pt, dash pattern=on 5pt off 2pt on 1pt off 2pt}{$\ControlC$ indirect}\quad
\lineamuestraedo{color={rgb,255:red,198;green,40;blue,40}, line width=1.4pt, dashed}{$\ControlC$ direct}\par\smallskip
\lineamuestraedo{color={rgb,255:red,106;green,27;blue,154}, line width=1.4pt}{$\ControlS$ classical}\quad
\lineamuestraedo{color={rgb,255:red,0;green,131;blue,143}, line width=1.4pt, dash pattern=on 5pt off 2pt on 1pt off 2pt}{$\ControlS$ indirect}\quad
\lineamuestraedo{color={rgb,255:red,239;green,108;blue,0}, line width=1.4pt, dashed}{$\ControlS$ direct}\par
\endgroup\smallskip}

% #1 subcaption text, #2 data file, #3-5 p50/p90/p99, #6 ymax, #7 xtick list
\newcommand{\panelhistval}[7]{%
\begin{subfigure}[t]{\imgdoscolhist}
\centering
\begin{tikzpicture}
\begin{axis}[estilohistval, xmode=log, ybar interval,
ymin=0, ymax=#6, xtick={#7}]
\addplot table[x=edge,y=count] {figuras/#2};
\draw[magenta, thick, dashed] (axis cs:#3,0) -- (axis cs:#3,#6);
\draw[orange!90!black, thick, dashed] (axis cs:#4,0) -- (axis cs:#4,#6);
\draw[red!70!black, thick, dashed] (axis cs:#5,0) -- (axis cs:#5,#6);
\end{axis}
\end{tikzpicture}
\caption{#1}
\end{subfigure}}

\begin{figure}[H]
\centering
\begingroup\footnotesize
\lineamuestraedo{magenta, thick, dashed}{$p_{50}$ (median)}\quad
\lineamuestraedo{orange!90!black, thick, dashed}{$p_{90}$}\quad
\lineamuestraedo{red!70!black, thick, dashed}{$p_{99}$}
\endgroup\smallskip

\panelhistval{States}{hist_edges_estados.dat}{4.341e-05}{2.479e-04}{7.756e-04}{900}{1e-6,1e-5,1e-4,1e-3}\hfill
\panelhistval{Adjoints}{hist_edges_adjuntos.dat}{5.188e-02}{2.404e-01}{1.623e+00}{1700}{1e-3,1e-1,1e1}

\medskip
\panelhistval{Optimal controls}{hist_edges_controles.dat}{1.052e-03}{5.870e-03}{2.437e-02}{1150}{1e-4,1e-3,1e-2,1e-1}
\caption{Distribution, over the $20\,000$ samples of the validation set, of the mean squared error attained by each of the three DeepONets. The dashed lines mark the $50$th, $90$th and $99$th percentiles of each distribution.}
\label{fig:distribucion_error_validacion}
\end{figure}

\section{Optimal controls obtained for some samples}\label{ap:samples}

In this appendix, we present the optimal controls obtained by both strategies for some samples chosen at random from the test set of $100$ problems. Specifically, samples $3$, $18$, $47$, $63$, $81$, and $84$ were selected. For each of these samples, Figure \ref{fig:muestras_apendice} shows the optimal controls $\ControlC$ and $\ControlS$ in the first row, and the absolute error of the two strategies relative to the classical reference (for $\ControlC$ and for $\ControlS$) in the second row. Table \ref{tab:muestras_prueba_deeponet} supplements the figure with the mean squared error, the maximum pointwise error, and the number of iterations required by the classical method and the \textit{indirect strategy} for each of the six samples (the \textit{direct strategy} always requires a single evaluation of the network).

\newcommand{\imgdoscol}{0.47\textwidth}
\pgfplotsset{
estilomuestraedofila/.style={
scale only axis,
width=6.47cm, height=3.25cm,
xmin=0, xmax=2,
xlabel={$t$}, ylabel={},
grid=both, grid style={gray!30},
/pgf/number format/fixed,
/pgf/number format/precision=2,
/pgf/number format/use comma=false,
/pgf/number format/set decimal separator={.},
scaled y ticks=false,
yticklabel style={/pgf/number format/fixed, /pgf/number format/precision=2, text width=0.7cm, align=right},
tick label style={font=\small}, label style={font=\small},
title style={font=\small, yshift=-1mm}
}}

% #1 data file, #2 label of the sample, #3 error bound in C, #4 error bound in S
% 2x2 grid per sample: row 1 = control C, control S; row 2 = error in C, error in S.
\newcommand{\panelmuestraedo}[4]{%
\noindent\footnotesize\textbf{#2}\par\smallskip
\begin{subfigure}[t]{\imgdoscol}
\centering
\begin{tikzpicture}
\begin{axis}[estilomuestraedofila, title={Control $\ControlC$},
ymin=-0.09, ymax=1.09]
\addplot+[color={rgb,255:red,21;green,101;blue,192}, solid, line width=1.6pt, mark=none]
table[x=t,y=C0] {figuras/#1};
\addplot+[color={rgb,255:red,46;green,125;blue,50}, dash pattern=on 6pt off 2.5pt on 1.2pt off 2.5pt, line width=1.6pt, mark=none]
table[x=t,y=C1] {figuras/#1};
\addplot+[color={rgb,255:red,198;green,40;blue,40}, dashed, line width=1.6pt, mark=none]
table[x=t,y=C2] {figuras/#1};
\end{axis}
\end{tikzpicture}%
\end{subfigure}\hfill
\begin{subfigure}[t]{\imgdoscol}
\centering
\begin{tikzpicture}
\begin{axis}[estilomuestraedofila, title={Control $\ControlS$},
ymin=-0.09, ymax=1.09]
\addplot+[color={rgb,255:red,106;green,27;blue,154}, solid, line width=1.6pt, mark=none]
table[x=t,y=S0] {figuras/#1};
\addplot+[color={rgb,255:red,0;green,131;blue,143}, dash pattern=on 6pt off 2.5pt on 1.2pt off 2.5pt, line width=1.6pt, mark=none]
table[x=t,y=S1] {figuras/#1};
\addplot+[color={rgb,255:red,239;green,108;blue,0}, dashed, line width=1.6pt, mark=none]
table[x=t,y=S2] {figuras/#1};
\end{axis}
\end{tikzpicture}%
\end{subfigure}

\medskip
\begin{subfigure}[t]{\imgdoscol}
\centering
\begin{tikzpicture}
\begin{axis}[estilomuestraedofila, title={Error in $\ControlC$},
ymin=0, ymax=#3]
\addplot+[color={rgb,255:red,46;green,125;blue,50}, dash pattern=on 6pt off 2.5pt on 1.2pt off 2.5pt, line width=1.6pt, mark=none]
table[x=t,y=errC1] {figuras/#1};
\addplot+[color={rgb,255:red,198;green,40;blue,40}, dashed, line width=1.6pt, mark=none]
table[x=t,y=errC2] {figuras/#1};
\end{axis}
\end{tikzpicture}%
\end{subfigure}\hfill
\begin{subfigure}[t]{\imgdoscol}
\centering
\begin{tikzpicture}
\begin{axis}[estilomuestraedofila, title={Error in $\ControlS$},
ymin=0, ymax=#4]
\addplot+[color={rgb,255:red,0;green,131;blue,143}, dash pattern=on 6pt off 2.5pt on 1.2pt off 2.5pt, line width=1.6pt, mark=none]
table[x=t,y=errS1] {figuras/#1};
\addplot+[color={rgb,255:red,239;green,108;blue,0}, dashed, line width=1.6pt, mark=none]
table[x=t,y=errS2] {figuras/#1};
\end{axis}
\end{tikzpicture}%
\end{subfigure}%
}

\begin{figure}[H]
\centering
\leyendamuestrasedo

\panelmuestraedo{muestra_3.dat}{Sample 3}{0.121}{0.180}

\medskip
\panelmuestraedo{muestra_18.dat}{Sample 18}{0.124}{0.233}
\caption{Optimal controls and absolute error for six samples of the test set selected at random (1/3).}
\label{fig:muestras_apendice}
\end{figure}

\clearpage
\begin{figure}[H]
\ContinuedFloat
\centering

\panelmuestraedo{muestra_37.dat}{Sample 37}{0.463}{0.632}

\medskip
\panelmuestraedo{muestra_47.dat}{Sample 47}{0.121}{0.196}
\caption{Optimal controls and absolute error for six samples of the test set selected at random (2/3).}
\end{figure}

\clearpage
\begin{figure}[H]
\ContinuedFloat
\centering

\panelmuestraedo{muestra_63.dat}{Sample 63}{0.111}{0.129}

\medskip
\panelmuestraedo{muestra_81.dat}{Sample 81}{0.573}{0.035}
\caption{Optimal controls and absolute error for six samples of the test set selected at random (3/3).}
\end{figure}

\begin{table}[H]
\centering
{\small
\setlength{\tabcolsep}{4pt}
\renewcommand{\arraystretch}{1.12}
\begin{tabular}{c|cc|cc|cc}
\hline
 & \multicolumn{2}{c|}{\textbf{MSE}} & \multicolumn{2}{c|}{\textbf{Maximum error}} & \multicolumn{2}{c}{\textbf{Iterations}}  \\[-2mm]
\textbf{Sample} & $\ControlC$ & $\ControlS$ & $\ControlC$ & $\ControlS$ & Classical & Strategy \\ \hline
\multicolumn{7}{c}{\textit{Indirect strategy}} \\ \hline
$3$ & $1.06\times10^{-3}$ & $5.80\times10^{-4}$ & $0.107$ & $0.086$ & $37$ & $41$  \\
$18$ & $1.56\times10^{-3}$ & $8.12\times10^{-4}$ & $0.118$ & $0.120$ & $128$ & $76$  \\
{$37$} & {$1.03\times10^{-2}$} & {$2.14\times10^{-2}$} & {$0.441$} & {$0.458$} & {$29$} & {$16$}  \\
$47$ & $8.22\times10^{-4}$ & $1.36\times10^{-3}$ & $0.106$ & $0.187$ & $456$ & $184$  \\
$63$ & $3.37\times10^{-4}$ & $4.59\times10^{-4}$ & $0.095$ & $0.121$ & $306$ & $519$  \\
$81$ & $9.60\times10^{-3}$ & $2.14\times10^{-5}$ & $0.546$ & $0.034$ & $26$ & $18$  \\ \hline
\multicolumn{7}{c}{\textit{Direct strategy}} \\ \hline
$3$ & {$1.90\times10^{-3}$} & {$2.57\times10^{-3}$} & {$0.115$} & {$0.172$} & $37$ & $1$ \\
$18$ & {$2.97\times10^{-4}$} & {$2.30\times10^{-3}$} & {$0.063$} & {$0.222$} & $128$ & $1$ \\
{$37$} & {$1.86\times10^{-2}$} & {$4.90\times10^{-2}$} & {$0.311$} & {$0.602$} & {$29$} & $1$ \\
$47$ & {$9.58\times10^{-4}$} & {$1.46\times10^{-3}$} & {$0.116$} & {$0.109$} & $456$ & $1$ \\
$63$ & {$6.27\times10^{-4}$} & {$6.25\times10^{-4}$} & {$0.105$} & {$0.123$} & $306$ & $1$ \\
$81$ & {$2.95\times10^{-3}$} & {$4.82\times10^{-5}$} & {$0.318$} & {$0.024$} & $26$ & $1$  \\ \hline
\end{tabular}
}
\caption{Discrepancy of each strategy with respect to the classical method on the six represented samples.}
\label{tab:muestras_prueba_deeponet}
\end{table}


\begin{thebibliography}{99}

\bibitem{BarryStraume2026}
Barry-Straume, J.; Sarshar, A.; Popov, A. A. \& Sandu, A.
(2026).
Physics-informed neural networks for PDE-constrained optimization and control.
\textit{Communications on Applied Mathematics and Computation}, \textbf{8}, 1283--1306.

\bibitem{Boulle}
Boullé, N. \& Townsend, A.
(2023).
A mathematical guide to operator learning.
\textit{arXiv:2312.14688}.

\bibitem{ChenChen}
Chen, T. \& Chen, H.
(1995).
Universal approximation to nonlinear operators by neural networks with arbitrary activation functions and its application to dynamical systems.
\textit{IEEE Transactions on Neural Networks}, \textbf{6}(4), 911--917.

\bibitem{Coddington} Coddington, E. A. \& Levinson, N. (1955). Theory of Ordinary Differential Equations. McGraw-Hill, New York.

\bibitem{Demo2023}
Demo, N.; Strazzullo, M. \& Rozza, G.
(2023).
An extended physics informed neural network for preliminary analysis of parametric optimal control problems.
\textit{Computers \& Mathematics with Applications}, \textbf{143}, 383--396.

\bibitem{Glorot}
Glorot, X. \& Bengio, Y.
(2010).
Understanding the difficulty of training deep feedforward neural networks.
\textit{Proceedings of the Thirteenth International Conference on Artificial Intelligence and Statistics}, PMLR \textbf{9}, 249--256.

\bibitem{Gomez2017}
Gómez, H.
(2017).
Quantitative analysis of the proliferative-to-invasive transition of hypoxic glioma cells.
\textit{Integrative Biology}, \textbf{9}, 257--262.

\bibitem{Goodfellow}
Goodfellow, I.; Bengio, Y. \& Courville, A.
(2016).
\textit{Deep Learning}.
Cambridge, Massachusetts: MIT Press.

\bibitem{Hinze}
Hinze, M.; Pinnau, R.; Ulbrich, M. \& Ulbrich, S.
(2009).
\textit{Optimization with PDE Constraints}.
Dordrecht: Springer.

\bibitem{Hwang2022}
Hwang, R.; Lee, J. Y.; Shin, J. Y. \& Hwang, H. J.
(2022).
Solving PDE-constrained control problems using operator learning.
\textit{Proceedings of the AAAI Conference on Artificial Intelligence}, \textbf{36}(4), 4504--4512.

\bibitem{Adam}
Kingma, D. P. \& Ba, L. J.
(2015).
Adam: A method for stochastic optimization.
\textit{International Conference on Learning Representations (ICLR)}.

\bibitem{Leo2}
López-Agredo, J. L.; Rueda-Gómez, D. A. \& Villamizar-Roa, É. J.
(2023).
Numerical analysis of a mathematical model describing the evolution of hypoxic glioma cells.
\textit{Computers \& Mathematics with Applications}, \textbf{131}, 138--157.

\bibitem{LuDeepONet}
Lu, L.; Jin, P.; Pang, G.; Zhang, Z. \& Karniadakis, G. E.
(2021).
Learning nonlinear operators via DeepONet based on the universal approximation theorem of operators.
\textit{Nature Machine Intelligence}, \textbf{3}, 218--229.

\bibitem{luenberger} Luenberger, D. G. (1969). \textit{Optimization by Vector Space Methods}. New York: John Wiley \& Sons.

\bibitem{Mowlavi2023}
Mowlavi, S. \& Nabi, S.
(2023).
Optimal control of PDEs using physics-informed neural networks.
\textit{Journal of Computational Physics}, \textbf{473}, 111731.

\bibitem{Pascanu}
Pascanu, R.; Mikolov, T. \& Bengio, Y.
(2013).
On the difficulty of training recurrent neural networks.
\textit{Proceedings of the 30th International Conference on Machine Learning}, PMLR \textbf{28}(3), 1310--1318.

\bibitem{Polak}
Polak, E.
(1997).
\textit{Optimization: Algorithms and Consistent Approximations}.
New York: Springer.

\bibitem{Raissi}
Raissi, M.; Perdikaris, P. \& Karniadakis, G. E.
(2019).
Physics-informed neural networks: A deep learning framework for solving forward and inverse problems involving nonlinear partial differential equations.
\textit{Journal of Computational Physics}, \textbf{378}, 686--707.

\bibitem{Troltzsch}
Tröltzsch, F.
(2010).
\textit{Optimal Control of Partial Differential Equations: Theory, Methods and Applications}.
Graduate Studies in Mathematics \textbf{112}. Providence: American Mathematical Society.

\bibitem{Yin2024}
Yin, P.; Xiao, G.; Tang, K. \& Yang, C.
(2024).
AONN: An adjoint-oriented neural network method for all-at-once solutions of parametric optimal control problems.
\textit{SIAM Journal on Scientific Computing}, \textbf{46}(1), C127--C153.

\bibitem{Yong2024}
Yong, J.; Luo, X. \& Sun, S.
(2024).
Deep multi-input and multi-output operator networks method for optimal control of PDEs.
\textit{Electronic Research Archive}, \textbf{32}(7), 4291--4320.


\end{thebibliography}
\end{document}